\documentclass[a4paper,12pt,twoside]{article}
\usepackage{amsthm}
\usepackage{hyperref}
\usepackage[utf8]{inputenc}
\usepackage{extpfeil}
\usepackage[
backend=biber,
style=numeric
]{biblatex}
\DeclareNameAlias{default}{family-given}
\addbibresource{biblio.bib}
\usepackage{subcaption}
\usepackage[top=2cm,bottom=2cm,left=2cm,right=2cm,asymmetric]{geometry}
\usepackage{lipsum}
\usepackage{tikz}
\usetikzlibrary{decorations.pathreplacing,shapes.misc}
\usepackage{pgfplots}
\usepgfplotslibrary{fillbetween}
\pgfplotsset{compat=1.11}
\usepackage{mathtools}
\usepackage{changepage}
\usepackage{theoremref}
\usepackage{xfrac}
\usepackage{enumitem}

\usepackage{ulem}
\setlength{\topmargin}{-.2in} \setlength{\oddsidemargin}{-.2in}
\setlength{\textheight}{8.5in} \setlength{\textwidth}{6.5in}
\setlength{\footnotesep} {\baselinestretch\baselineskip}
\newlength{\abstractwidth}
\setlength{\abstractwidth}{\textwidth}
\addtolength{\abstractwidth}{-6pc}

\usepackage{titlesec}
\titleformat{\section}
  {\normalfont\rmfamily\normalsize\bfseries \upshape \centering \color{black}}
  {\thesection}{1em}{}

  \titleformat{\subsection}
  {\normalfont\rmfamily\normalsize\bfseries \color{black}}
  {\thesubsection}{1em}{}

\usepackage{fancyhdr}

\flushbottom \thispagestyle{empty} \pagestyle{plain}

\renewcommand{\thanks}[1]{\footnote{#1}} 

\newcommand{\be}{\begin{equation}}
\newcommand{\bea}{\begin{eqnarray}}
\newcommand{\eea}{\end{eqnarray}} \newcommand{\ee}{\end{equation}}
 
 \def\ba{\begin{eqnarray}}
\def\ea{\end{eqnarray}}


\def\[{{\bf [}}
\def\]{{\bf ]}}



\begin{document}
\pagestyle{fancy}
\fancyhf{} 
\renewcommand{\headrulewidth}{0pt}
\fancyhf[EHC]{B. Harvie \& Y.K. Wang}
\fancyhf[OHC]{A Rigidity Theorem for A.F. Static Manifolds and its Applications}
\fancyhf[FC]{\thepage}

\newtheorem{theorem}{Theorem} [section]
\newtheorem{proposition}[theorem]{Proposition} 
\newtheorem{lemma}[theorem]{Lemma} 
\newtheorem{corollary}[theorem]{Corollary} 
\newtheorem{definition}[theorem]{Definition} 
\newtheorem{conjecture}[theorem]{Conjecture} 
\newtheorem{example}[theorem]{Example} 
\newtheorem{claim}[theorem]{Claim} 
\newtheorem{remark}[theorem]{Remark} 
\newtheorem*{proposition*}{Proposition}

\begin{centering}
 
\textup{\Large \bf A Rigidity Theorem for Asymptotically Flat Static Manifolds and its Applications}

\vspace{10 mm}
\textnormal{ \large Brian Harvie and Ye-Kai Wang }
\vspace{0.5in}
\begin{abstract}
In this paper, we study the Minkowski-type inequality for asymptotically flat static manifolds $(M^{n},g)$ with boundary and with dimension $n<8$ that was established by McCormick in \cite{static_minkowski}. First, we show that any asymptotically flat static $(M^{n},g)$ which achieves the equality and has CMC or equipotential boundary is isometric to a rotationally symmetric region of the Schwarzschild manifold. Then, we apply conformal techniques to derive a new Minkowski-type inequality for the level sets of bounded static potentials. Taken together, these provide a robust approach to detecting rotational symmetry of asymptotically flat static systems.

As an application, we prove global uniqueness of static metric extensions for the Bartnik data induced by both Schwarzschild coordinate spheres and Euclidean coordinate spheres in dimension $n < 8$ under the natural condition of \textit{Schwarzschild stability}. This generalizes an earlier result of Miao in \cite{boundary_effects}. We also establish uniqueness for equipotential photon surfaces with small Einstein-Hilbert energy. This is interesting to compare with other recent uniqueness results for static photon surfaces and black holes, e.g. in \cite{level_set_static}, \cite{photon_surface_equipotential}, and \cite{photon_sphere_spinorial}.
\end{abstract}



\end{centering}

\section{Introduction} 

A Riemannian manifold $(M^{n},g)$ is \textit{static} if it admits a positive solution $V: M \rightarrow \mathbb{R}^{+}$ to the system of equations

\begin{eqnarray} \label{static_equations}
\nabla^{2}_{g} V &=& V \text{Ric}_{g}, \\
\Delta_{g} V &=& 0, \nonumber
\end{eqnarray}
where $\nabla^{2}_{g}$, $\Delta_{g}$, and $\text{Ric}_{g}$ are the Hessian, Laplacian, and Ricci tensor of $(M^{n},g)$, respectively. \eqref{static_equations} are known as the \textit{static equations}, and the solution $V$ is called a \textit{static potential}. These names come from the role $g$ and $V$ play in general relativity. Specifically, $(M^{n},g)$ is static with static potential $V$ if and only the time-independent Lorentzian metric

\begin{equation} \label{static_product}
  \overline{g} = -V(x)^{2} dt^{2} + g, \hspace{1cm} x \in M^{n}, 
\end{equation}
solves the vacuum Einstein field equations $\text{Ric}_{\overline{g}} =0$ on $L^{n+1}= M^{n} \times \mathbb{R}$. The Lorentzian manifold $(L^{n+1},\overline{g})$ is ``static" because the time-translation vector field $\partial_{t}$ on $L^{n+1}$ is a timelike, hypersurface orthogonal Killing vector field of $(L^{n+1},\overline{g})$.

The most important static manifold is the \textit{Schwarzschild manifold}, parametrized by $m \in \mathbb{R}$ and equipped with the Schwarzschild potential function

\begin{eqnarray}
 (M^{n},g_{m}) &=& ( \mathbb{S}^{n-1} \times (r_{m},\infty), \frac{1}{1- \frac{2m}{r^{n-2}}} dr^{2} + r^{2} \sigma^{\mathbb{S}^{n-1}}), \hspace{0.6cm} r_{m} = (\max \{0,2m \})^{\frac{1}{n-2}} \nonumber \\
 V_{m}(r) &=& \sqrt{1- \frac{2m}{r^{n-2}}}.  \label{schwarzschild}
\end{eqnarray}
The corresponding spacetime $(L^{n+1},\overline{g}_{m})$ is the Schwarzschild spacetime, which models the gravitational field outside of a rotationally symmetric star.

A fundamental set of questions in general relativity concern whether or not the spacetime $(L^{n+1},\overline{g})$ outside of a static, isolated astrophysical object, such as a static star or black hole, is neccessarily Schwarzschild. The spacetime $(L^{n+1},\overline{g})$ has the form \eqref{static_product} in this situation, so a constant time slice $(M^{n} \times \{ t_{0} \},g)$ of $(L^{n+1},\overline{g})$ is a static Riemannian manifold with a geometric constraint imposed on its boundary by the object. Therefore, the problem is to determine if a static manifold $(M^{n},g)$ satisfying a geometric boundary condition and with suitable asymptotics is rotationally symmetric and hence isometric to a piece of \eqref{schwarzschild}. Problems like these have been studied as far back as Israel's static black hole uniqueness theorem from \cite{event_horizons_static}.

In this work, we develop a new approach to the uniqueness of asymptotically flat static systems that is based on a Minkowski-type inequality established by McCormick in \cite{static_minkowski}. First, we characterize the equality case in this inequality under suitable boundary assumptions. This provides a uniqueness criterion for asymptotically flat static manifolds of dimension $n<8$ in terms of boundary data. Then, we apply this criterion to establish uniqueness of \eqref{schwarzschild} in several important contexts. Notably, we prove global uniqueness for suitably-defined static extensions of the Bartnik data induced by Schwarzschild and Euclidean coordinate spheres when $n < 8$, as well as new uniqueness theorems for CMC spheres and for equipotential photon surfaces.  

Our approach involves several other results which are more broadly interesting. For example, we prove via conformal techniques an additional Minkowski-type inequality for the level sets of bounded static potentials which extends the Willmore-type inequalities from \cite{level_set_static}. We introduce a notion of \textit{Schwarzschild stability} for CMC hypersurfaces which is especially useful for studying CMC $2$-spheres in $3$-manifolds with non-negative scalar curvature. Finally, we derive a new upper bound on the Hawking mass of surfaces in asymptotically flat static $3$-manifolds.

\subsection{Main Results} We will wait until the next section to define our standard notation as well as the standard objects of mathematical relativity, e.g. the ADM mass, outer-minimizing hypersurfaces, etc., and so a reader unfamiliar with the field should read that section first. In all of our theorems, the static potential $V$ is bounded on $M^{n}$ and scaled so that

\begin{equation*}
    \lim_{|x| \rightarrow \infty} V(x)= 1
\end{equation*}
in an asymptotically flat coordinate chart. Our results build upon a previously-established Minkowski-type inequality for static manifolds. Recall that the Minkowski inequality for a convex hypersurface $\Sigma^{n-1} \subset \mathbb{R}^{n}$ reads

\begin{equation} \label{euclidean_minkowski}
    \frac{1}{(n-1) w_{n-1}} \int_{\Sigma} H d\sigma \geq \left(\frac{|\Sigma|}{w_{n-1}} \right)^{\frac{n-2}{n-1}},
\end{equation}
where $H$ is the mean curvature of $\Sigma$ taken with respect to the outward normal. \eqref{euclidean_minkowski} has been generalized both to non-convex hypersurfaces in Euclidean space and to hypersurfaces in non-Euclidean backgrounds, c.f. \cite{sub_static_li_xia}, \cite{minkowski_linear}, \cite{minkowski_euclidean}, \cite{minkowski_non_linear}, \cite{imcf_isoperimetric}, and \cite{ads_minkowski}. 

The inverse mean curvature flow (see Section 2.2) provides an approach to Minkowski-type inequalities in manifolds satisfying suitable asymptotic decay conditions. In \cite{schwarzschild_minkowski}, Wei proves a Minkowski inequality for hypersurfaces in the Schwarzschild manifold using a monotone quantity along the weak inverse mean curvature flow. Later in \cite{static_minkowski}, McCormick generalized Wei's inequality to asymptotically flat manifolds $(M^{n},g)$ in dimension $n < 8$ that admit a bounded static potential $V$. In this ``static Minkowski inequality", the hypersurface $\Sigma$ is a boundary component of $(M^{n},g)$, and the convexity assumption is replaced with the much weaker assumption that $\Sigma$ is outer-minimizing.

\begin{theorem}[ \cite{static_minkowski}, Static Minkowski Inequality] \thlabel{minkowski}
Let $(M^{n},g, V)$, $3 \leq n \leq 7$, be an asymptotically flat static system with ADM mass $m$ and compact, non-empty boundary $\partial M$. Suppose that $\partial M = \Sigma \cup (\cup_{i=1}^{k} \Sigma_{i})$, where $\Sigma$ is an outer-minimizing hypersurface and $\Sigma_{i}$ are closed minimal surfaces for each $i= 1, \dots, k$. Then we have the inequality

\begin{equation} \label{inequality}
    \frac{1}{(n-1)w_{n-1}} \int_{\Sigma} VH d \sigma + 2m \geq \left(\frac{|\Sigma|}{w_{n-1}} \right) ^{\frac{n-2}{n-1}}
\end{equation}
on the boundary component $\Sigma$. Furthermore, equality holds in \eqref{inequality} if and only if $\partial M = \Sigma$ and $M^{n}$ is foliated by a smooth solution $\Sigma_{t}$ to the inverse mean curvature flow where each $\Sigma_{t}$ is totally umbilical.
\end{theorem}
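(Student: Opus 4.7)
The plan is to run inverse mean curvature flow (IMCF) from $\Sigma$ and identify a Wei-type functional that is non-increasing along the flow. Since $\Sigma$ is outer-minimizing, the Huisken--Ilmanen construction gives a proper weak solution $\{\Sigma_t\}_{t\ge 0}$ with $\Sigma_0 = \Sigma$, and the dimension range $3 \le n \le 7$ guarantees that the approximating minimizing hulls are smooth away from jump times. Writing $A(t) = |\Sigma_t|$ and $F(t) = \int_{\Sigma_t} VH\, d\sigma$, I will work with
\[
Q(t) \;=\; A(t)^{-\frac{n-2}{n-1}}\Bigl(F(t) + 2m(n-1)w_{n-1}\Bigr).
\]
A direct calculation on Schwarzschild coordinate spheres ($V^{2}=1-2mr^{-(n-2)}$, $H=(n-1)V/r$) gives $Q \equiv (n-1)w_{n-1}^{1/(n-1)}$; this is precisely the asymptotic value of $Q(t)$, and the inequality \eqref{inequality} will follow from $Q(0) \ge \lim_{t\to\infty} Q(t)$.

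On the smooth part of the flow I differentiate $F$ using $\partial_t = H^{-1}\nu$ together with the standard IMCF variation formulas for $d\sigma$, $H$, and $V$. The static equations $\nabla^{2}V = V\,\text{Ric}$ and $\Delta V=0$, combined with the Gauss decomposition $\Delta V = \nabla^{2}V(\nu,\nu)+\Delta_{\Sigma}V+H\nabla_{\nu}V$, yield
\[
V\,\text{Ric}(\nu,\nu) \;=\; -\Delta_{\Sigma}V - H\,\nabla_{\nu}V.
\]
Substituting this, exploiting self-adjointness of $\Delta_{\Sigma}$ on the closed leaf to cancel $\int V\Delta_{\Sigma}(1/H)$ against $\int \Delta_{\Sigma}V/H$, and splitting $|A|^{2}=|\mathring{A}|^{2}+H^{2}/(n-1)$ give
\[
F'(t) \;=\; \frac{n-2}{n-1}\,F(t) + 2\int_{\Sigma_{t}}\nabla_{\nu}V\, d\sigma - \int_{\Sigma_{t}} \frac{V\,|\mathring{A}|^{2}}{H}\, d\sigma.
\]
Applying the divergence theorem to the exterior of $\Sigma_{t}$ (which excludes the minimal components $\Sigma_{i}$) with $\Delta V = 0$ and the asymptotic expansion $V = 1 - m|x|^{-(n-2)} + O(|x|^{-(n-1)})$ shows $\int_{\Sigma_{t}}\nabla_{\nu}V\, d\sigma = (n-2)m\,w_{n-1}$, independent of $t$. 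Combined with $A(t)=A(0)e^{t}$, this assembles into
\[
Q'(t) \;=\; -\,A(t)^{-\frac{n-2}{n-1}} \int_{\Sigma_{t}}\frac{V\,|\mathring{A}|^{2}}{H}\, d\sigma \;\le\; 0.
\]

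To promote this to the weak flow I follow the Huisken--Ilmanen elliptic regularization: $Q$ is monotone for the approximating smooth flows and the monotonicity passes to the limit; at a jump time, the surface is replaced by its outer-minimizing hull, which preserves area and makes $F$ non-increasing because the replaced region is minimal and $\Delta V = 0$. As $t \to \infty$ the leaves round out to large coordinate spheres, and the asymptotic expansions of $V$, $H$, and $|\Sigma_{t}|$ combine to give $\lim_{t\to\infty} Q(t) = (n-1)w_{n-1}^{1/(n-1)}$, which after rearrangement is exactly \eqref{inequality}.

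For rigidity, suppose equality holds in \eqref{inequality}. Then $Q$ is constant along the flow, so $Q' \equiv 0$ on every smooth piece and no jump can occur. Since $V>0$ on $M$ and $H>0$ on each IMCF leaf, $Q'\equiv 0$ forces $\mathring{A}\equiv 0$, i.e.\ every leaf is totally umbilic; the absence of jumps together with the IMCF exhausting $M$ forces $\partial M = \Sigma$. The converse is immediate: an umbilic smooth IMCF foliation satisfies $Q' \equiv 0$, hence $Q(0)=\lim_{t\to\infty}Q(t)$, which is the equality case. The main technical obstacle is making the monotonicity and the limit $t \to \infty$ rigorous in the weak-flow framework — in particular controlling $\int VH\, d\sigma$ across jump times and at weak levels — which is handled by adapting the Huisken--Ilmanen elliptic regularization scheme to the $V$-weighted setting, as in Wei's treatment of the Schwarzschild background.
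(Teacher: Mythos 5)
Your proposal is essentially the proof of the cited source: the paper itself does not prove this theorem but imports it from \cite{static_minkowski}, and McCormick's argument is exactly the route you sketch, namely Wei's monotone quantity from \cite{schwarzschild_minkowski} run along the Huisken--Ilmanen weak inverse mean curvature flow of \cite{imcf_penrose}, with the dimension restriction coming from the regularity theory for minimizing hulls. Your smooth-flow computation of $F'(t)$, the jump-time argument (area preserved, $F$ non-increasing since $VH\ge 0$ on discarded pieces and $H=0$ on new pieces), and the identification of the asymptotic value of $Q$ are all in line with that reference.

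One step is stated incorrectly and deserves care. The exterior of $\Sigma_{t}$ in $M$ does \emph{not} exclude the minimal components: the $\Sigma_{i}$ are boundary components of $M$ lying outside the region swept from $\Sigma$ until the weak flow passes them, so the divergence theorem gives $\int_{\Sigma_{t}}\partial_{\nu}V\,d\sigma=(n-2)m\,w_{n-1}-\sum_{i}\int_{\Sigma_{i}}\partial_{\nu}V\,d\sigma$ (compare \eqref{mass_formula}), not the exact value $(n-2)m\,w_{n-1}$ ``independent of $t$.'' What the monotonicity $Q'\le 0$ actually requires is only the one-sided bound $\int_{\Sigma_{t}}\partial_{\nu}V\,d\sigma\le (n-2)m\,w_{n-1}$, i.e.\ non-negativity of the flux of $V$ through each minimal component with respect to the normal pointing toward infinity; this is precisely where the nature of the $\Sigma_{i}$ (static horizons, where $V$ vanishes and hence $\partial_{\nu}V\ge 0$ since $V>0$ in the interior) must be invoked, as in \cite{static_minkowski}. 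With that correction, and granting the deferred technical points you name (monotonicity through the weak/jump regime and the $t\to\infty$ limit of $\int_{\Sigma_{t}}VH\,d\sigma$, which is where \cite{imcf_penrose} and \cite{static_minkowski} do genuine work), the outline is sound and coincides with the cited proof.
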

We would like to obtain a sharper characterization of the equality case in \eqref{inequality}. When $(M^{n},g)$ is the outside of a coordinate sphere in the Schwarzschild manifold, the inverse mean curvature flow $\Sigma_{t}$ of $\Sigma = \{r = r_{0} \}$ is a coordinate sphere of radius $r(t) \rightarrow \infty$, meaning equality holds in that setting. The first goal of this paper is to show that the equality holds \textit{only} on Schwarzschild coordinate spheres under natural boundary assumptions. By exploiting the umbilical foliation of $(M^{n},g)$ by IMCF that is noted in the rigidity statement of \thref{minkowski}, we prove this when the boundary component $\Sigma$ is assumed to either have constant mean curvature or to belong to a level set of $V$ (see the end of the section for a discussion on the general case of equality).

\begin{theorem}[Rigidity of Asymptotically Flat Static Manifolds] \thlabel{cmc_rigidity}
Let $(M^{n},g, V)$ be as in \thref{minkowski}, and suppose that either

\begin{itemize}
    \item $H_{\Sigma} = H_{0}> 0$ is constant, OR
    \item $V|_{\Sigma} = V_{0} > 0$ is constant
\end{itemize}
on the boundary component $\Sigma$. Then equality is achieved in \eqref{inequality} if and only if $(M^{n},g)$ is isometric to the exterior of the coordinate sphere $\{ r = r_{0} \}$ in the Schwarzschild manifold of mass $m$, where $r_{0}= \left(\frac{|\Sigma|}{w_{n-1}} \right)^{\frac{1}{n-1}}$.   
\end{theorem}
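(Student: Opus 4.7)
The plan is to exploit the totally umbilical IMCF foliation $\{\Sigma_t\}_{t \ge 0}$ of $M^n$ (with $\Sigma_0 = \Sigma$) guaranteed by the rigidity clause of \thref{minkowski}, and to upgrade it — using either of the boundary hypotheses — to a warped-product decomposition of $(M^n, g)$ that can only be a Schwarzschild exterior. Since IMCF advances with normal speed $1/H$, I coordinatize $M^n \setminus \partial M \cong \Sigma \times [0, \infty)$ as $g = H^{-2} dt^2 + g_t$; combining $\partial_t g_t = 2h/H$ with the umbilical identity $h = \tfrac{H}{n-1} g_t$ gives $g_t = e^{2t/(n-1)} g_0$, so every leaf is homothetic to $(\Sigma, g_0)$. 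If I can then show that $H$ (and hence $V$) depends only on $t$, the metric is a genuine warped product $g = \rho(t)^2 dt^2 + r(t)^2 g_0$, and the static equations reduce to the classical Schwarzschild ODE system; combined with asymptotic flatness, the normalization $V \to 1$, and the fact that $\Sigma_t$ must approach round coordinate spheres at infinity, this identifies $g_0$ as the round metric of radius $r_0 = (|\Sigma|/w_{n-1})^{1/(n-1)}$ and $(M^n, g)$ as the exterior of $\{r = r_0\}$ in Schwarzschild of mass $m$.

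The main task is therefore to propagate the boundary condition from $\Sigma_0$ to every leaf. On any umbilical slice, Codazzi gives $\tfrac{n-2}{n-1}\nabla^{\Sigma_t} H = \text{Ric}_g(\nu,\cdot)^T$; substituting the static identity $V\,\text{Ric}_g = \nabla^2 V$ and using $h = \tfrac{H}{n-1}g_t$ sharpens this to
\[
\tfrac{(n-1)V}{n-2}\,\nabla^{\Sigma_t} H \;=\; \nabla^{\Sigma_t}(\nu V) \;-\; \tfrac{H}{n-1}\,\nabla^{\Sigma_t} V. \qquad (\star)
\]
Since $R_g = 2\Delta V/V = 0$ in a static manifold, the Gauss equation on $\Sigma_t$ combined with umbilicity and $g_t = e^{2t/(n-1)}g_0$ gives
\[
e^{-2t/(n-1)}\, R_{g_0} \;=\; -\tfrac{2\nabla^2 V(\nu,\nu)}{V} + \tfrac{n-2}{n-1}\,H^2, \qquad (\star\star)
\]
and harmonicity of $V$ supplies the normal identity $\nabla^2 V(\nu,\nu) + H(\nu V) + \Delta^{\Sigma_t} V = 0$. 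In the equipotential case, $\nabla^\Sigma V \equiv 0$ at $t = 0$ reduces $(\star)$ to a direct algebraic link between $\nabla^\Sigma H$ and $\nabla^\Sigma(\nu V)$; feeding this into the IMCF evolution equations for $V$ and $H$, together with $(\star\star)$, should yield a closed system for the tangential gradients whose unique solution with vanishing initial data is zero, so $V = V(t)$ and then $(\star)$ forces $H = H(t)$. The CMC case is symmetric: start from $\nabla^\Sigma H \equiv 0$, use $(\star)$ to constrain $\nabla^\Sigma(\nu V)$, and propagate in parallel.

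Once $H = H(t)$ and $V = V(t)$, the warped-product structure is in place. Scalar-flatness combined with the static equations constrains $g_0$ to be Einstein; closedness of $\Sigma$ together with the asymptotic round-sphere geometry of the leaves forces $g_0 = r_0^2\,\sigma^{\mathbb{S}^{n-1}}$. The resulting ODEs for $(\rho, r, V)$ have a two-parameter family of solutions, and the pair $(|\Sigma|, m)$ selects exactly the Schwarzschild exterior described in the theorem. The hardest step is clearly the middle one: the IMCF evolution equations for $H$ and $V$ couple their tangential derivatives through Ricci terms which, via $(\star)$ and $(\star\star)$, feed back into those same tangential derivatives, and closing this loop so that a maximum-principle or uniqueness argument applies to a quantity like $|\nabla^{\Sigma_t} V|^2 + |\nabla^{\Sigma_t} H|^2$ is where the real work lies.
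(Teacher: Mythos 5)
Your skeleton (umbilical IMCF foliation $\Rightarrow$ self-similar leaf metrics $g_t=e^{2t/(n-1)}g_0$, asymptotics $\Rightarrow$ roundness, boundary condition $\Rightarrow$ warped product $\Rightarrow$ Schwarzschild ODE) parallels the paper, but the step you yourself flag as the ``real work'' is a genuine gap, and as you have set it up it would fail. Your plan is to propagate constancy by a uniqueness/maximum-principle argument for $|\nabla^{\Sigma_t}V|^2+|\nabla^{\Sigma_t}H|^2$ ``with vanishing initial data'' --- but the initial data does not vanish in either case: in the CMC case nothing forces $\nabla^{\Sigma}V=0$ at $t=0$, and in the equipotential case nothing forces $\nabla^{\Sigma}H=0$ at $t=0$; your identity $(\star)$ at $t=0$ only links $\nabla^{\Sigma}H$ to $\nabla^{\Sigma}(\partial_\nu V)$ and shows neither is zero (also note the Codazzi coefficient there should be $-\tfrac{n-2}{n-1}V$, not $\tfrac{(n-1)V}{n-2}$). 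So the Gronwall-type loop you hope to close has no zero initial condition to run on, and you never obtain $H=H(t)$, $V=V(t)$ from the stated hypotheses. A second structural problem is the ordering: you defer roundness of $g_0$ to the very end, deduced from ``Einstein plus asymptotic roundness,'' but Einstein does not pin down $g_0$ for $n-1\geq 4$, and more importantly the equipotential case needs $R_{\sigma_0}$ constant \emph{before} any reduction can be made, so roundness must be established first. Roundness itself is not free: from the Huisken--Ilmanen blow-down (\thref{imcf_asymptotics}) one only has $C^{1,\alpha}$ convergence of the rescaled surfaces to $\partial B_{r_0}$, and since the blown-down induced metric equals $\sigma_0$ for all $t$, one must extract a $C^1$ isometric embedding of $(\mathbb{S}^{n-1},\sigma_0)$ onto the round sphere and then argue (e.g.\ via the transitive isometry group, as in \thref{round}) that $\sigma_0$ has constant curvature.

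The paper avoids your hard step entirely, and this is the idea you are missing: $V$ never needs to be shown constant on the leaves. Under umbilicity, the Gauss equation with $R_g=0$ and the self-similarity $\sigma_t=e^{2t/(n-1)}\sigma_0$ eliminate $\mathrm{Ric}(\nu,\nu)$ from the IMCF evolution of the speed, so $u=H^{-1}$ satisfies a quasi-linear parabolic equation whose coefficients involve only $\Delta_{\sigma_0}$, $R_{\sigma_0}$ and $t$ --- no $V$ and no unknown curvature terms (\thref{imcf_uniqueness}). Uniqueness for this Cauchy problem says $g$ is determined by $(\sigma_0,H|_{t=0})$ alone; once $\sigma_0$ is known to be round, constant initial data $H_0$ forces $u$ to coincide with the explicit ODE solution, which is exactly the Schwarzschild exterior with $m=m_0$ (\thref{umbilic_rig}). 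The equipotential case is then handled by a purely local computation at $t=0$ (\thref{cmc}): Codazzi as in your $(\star)$ gives $\partial_\nu V=-\tfrac{n-2}{n-1}V_0H+\beta$, and plugging this into the surface static equation together with the Gauss equation and the constancy of $R_{\sigma_0}$ yields an algebraic relation in $H$ whose differentiation forces $\nabla_{\Sigma}H=0$, reducing to the CMC case. If you want to salvage your route, you should replace the coupled gradient-propagation argument by this decoupling-plus-parabolic-uniqueness mechanism, and move the roundness argument to the front.
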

\begin{remark}
Rigidity in \thref{minkowski} is already understood when either $H_{0}=0$ or $V_{0}=0$ (incidentally, these are equivalent conditions for asymptotically flat systems with outer-minimizing boundary by Theorem 1 in \cite{static_min}). In those cases, $\Sigma$ is an outer-minimizing minimal hypersurface and the inequality \eqref{inequality} reduces to

\begin{equation*}
    m \geq  \frac{1}{2}\left(\frac{|\Sigma|}{w_{n-1}} \right)^{\frac{n-2}{n-1}}.
\end{equation*}
Indeed, this is the Riemannian Penrose inequality for asymptotically flat manifolds in dimensions less than $8$, which according to \cite{penrose_less_than_8} is saturated only by the Schwarzschild black hole. Therefore, we ignore these cases in this paper.
\end{remark}
Due to a well-known integral formula for the mass of an asymptotically flat static system (see Section 2.1), all terms in inequality \eqref{inequality} may be expressed as surface integrals. Thus \thref{cmc_rigidity} gives a uniqueness criterion for the Schwarzschild manifold solely in terms of boundary data. In particular, this criterion does not depend on the topology of $\Sigma$ or on the interior behavior of $V$, e.g. on the presence of critical points. However, these theorems are true only when $V$ is asymptotic to $1$, so one must invoke this asymptotic behavior in order to apply \thref{cmc_rigidity} to a given geometric boundary condition on $(M^{n},g,V)$.

To this end, we prove a new Minkowski-type inequality for asymptotically flat static systems with equipotential boundary, that is for $(M^{n},g,V)$ with $V|_{\Sigma} = V_{0} >0$ constant. \thref{minkowski} and \thref{cmc_rigidity} both apply regardless of the sign of $m$, but here we require $m>0$ as well as a connected boundary.

\begin{theorem}[Minkowski Inequality for Equipotential Boundaries] \thlabel{level_set_rig}
Let $(M^{n},g, V)$, $3 \leq n \leq 7$, be an asymptotically flat static system with ADM mass $m>0$. Suppose that $\partial M= \Sigma$ is connected and outer-minimizing. If $V|_{\Sigma}= V_{0} > 0$ is constant, then 

\begin{equation} \label{static_level_set}
    V_{0} \leq \frac{1}{(n-1)w_{n-1}} \left(\frac{|\Sigma|}{w_{n-1}} \right)^{\frac{2-n}{n-1}}  \int_{\Sigma} H d \sigma.
\end{equation}
Furthermore, equality holds in \eqref{static_level_set} if and only if $(M^{n},g)$ is isometric to the exterior of the coordinate sphere $\{ r = r_{0} \}$ in the Schwarzschild manifold of mass $m$, where $r_{0}= ( \frac{|\Sigma|}{w_{n-1}})^{\frac{1}{n-1}}$.
\end{theorem}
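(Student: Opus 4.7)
The plan is to derive \eqref{static_level_set} by combining \thref{minkowski} with an auxiliary Penrose-type bound for equipotential surfaces, and then invoke \thref{cmc_rigidity} for rigidity. Applying \thref{minkowski} to $\Sigma$ and pulling the constant $V_0$ out of the boundary integral gives
\begin{equation*}
\frac{V_0}{(n-1)\omega_{n-1}}\int_\Sigma H\, d\sigma + 2m \;\geq\; \left(\frac{|\Sigma|}{\omega_{n-1}}\right)^{\frac{n-2}{n-1}},
\end{equation*}
which I shall call $(\mathrm{A})$. Writing $A_0 := |\Sigma|/\omega_{n-1}$, the desired inequality \eqref{static_level_set} is equivalent to $\int_\Sigma H\, d\sigma \geq (n-1)\omega_{n-1} V_0\, A_0^{(n-2)/(n-1)}$, and together with $(\mathrm{A})$ this reduces to the Penrose-type inequality
\begin{equation*}
2m \;\leq\; (1 - V_0^2)\, A_0^{\frac{n-2}{n-1}},
\end{equation*}
which I shall call $(\mathrm{B})$. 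Note that $(\mathrm{B})$ is sharp on Schwarzschild coordinate spheres, where $1 - V_0^2 = 2m/r_0^{n-2}$ and $A_0 = r_0^{n-1}$.

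For $(\mathrm{B})$ I would use a conformal doubling in the spirit of Bunting--Masood-ul-Alam. Tracing the first static equation with $\Delta V = 0$ gives $R_g = 0$. Since $m > 0$ forces $0 < V < 1$ on $M$ via the maximum principle applied to the harmonic function $1 - V$, the normalized factors $\Phi_\pm := (1 \pm V)/(1 \pm V_0)$ are strictly positive, $g$-harmonic, and equal to $1$ on $\Sigma$; thus $\hat g_\pm := \Phi_\pm^{4/(n-2)} g$ are scalar-flat metrics agreeing with $g$ along $\Sigma$. Taking two copies of $M$ and gluing them across $\Sigma$---one with $\hat g_+$, which is asymptotically flat after rescaling by $\lim \Phi_+$, and the other with $\hat g_-$, whose end is conformally compactified to a smooth point via the asymptotics $\Phi_- \sim m/((1-V_0)|x|^{n-2})$ combined with a Kelvin inversion---produces a complete asymptotically flat manifold $\hat M$ with no boundary. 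A direct computation gives $\hat H_+ + \hat H_- = 2H - \frac{4(n-1)V_0}{(n-2)(1-V_0^2)}\,\partial_\nu V$ at the seam, and the static structure on $\Sigma$ (notably $V_0\,\mathrm{Ric}(\nu,\nu) = H\,\partial_\nu V$, derived from $\Delta V = 0$ together with the Hessian equation) should force this quantity to have the correct sign for Miao's corner theorem to apply, yielding distributional $R_{\hat M} \geq 0$. The positive mass theorem then gives $\hat m_{\hat M} \geq 0$, and computing $\hat m_{\hat M}$ in terms of $m$ and $V_0$ from the expansion $V = 1 - m/|x|^{n-2} + O(|x|^{1-n})$ yields $(\mathrm{B})$.

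The equality case follows cleanly: equality in \eqref{static_level_set} forces equality in both $(\mathrm{A})$ and $(\mathrm{B})$. Equality in $(\mathrm{B})$ triggers the PMT rigidity, making $\hat M$ isometric to Euclidean space; equality in $(\mathrm{A})$ combined with $V|_\Sigma \equiv V_0$ triggers the equipotential branch of \thref{cmc_rigidity}, identifying $(M,g)$ with the Schwarzschild exterior of $\{r = r_0\}$, where $r_0 = (|\Sigma|/\omega_{n-1})^{1/(n-1)}$.

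The main obstacle I anticipate is verifying the sign of $\hat H_+ + \hat H_-$ at the gluing seam: the classical Bunting--Masood-ul-Alam setup ($V|_\Sigma = 0$, so $\Sigma$ is automatically totally geodesic in $g$) trivializes the matching, whereas the non-zero $V_0$ here produces a mean-curvature correction from the conformal factors that must be controlled via the static equations. Carefully tracking the coordinate rescalings needed to put $\hat g_+$ in standard AF form, and thereby computing $\hat m_{\hat M}$ correctly in terms of $m$ and $V_0$, is a related delicate bookkeeping step.
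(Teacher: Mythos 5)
Your reduction of \eqref{static_level_set} to the combination of \thref{minkowski} (your inequality (A)) with the Penrose-type bound $2m \le (1-V_0^2)\,(|\Sigma|/w_{n-1})^{\frac{n-2}{n-1}}$ (your (B)) is logically sound, and the equality bookkeeping via the equipotential branch of \thref{cmc_rigidity} would then work. The gap is that (B) is left unproved, and the proposed doubling cannot deliver it. First, Miao's corner positive mass theorem requires the pointwise jump condition, which in your conventions reads $\hat H_+ + \hat H_- \le 0$, i.e. $H \le \frac{2(n-1)V_0}{(n-2)(1-V_0^2)}\,\partial_\nu V$ at every point of $\Sigma$. Nothing in the hypotheses ($V|_\Sigma = V_0$, $\Sigma$ outer-minimizing, $m>0$) yields this: the only pointwise identity the static equations give on an equipotential boundary is $H\,\partial_\nu V = -V_0\,\mathrm{Ric}(\nu,\nu)$ (note your displayed version has the wrong sign), which does not compare $H$ with $\partial_\nu V$; moreover on Schwarzschild coordinate spheres the required inequality is an exact \emph{equality}, so there is no sign or perturbative reason for it in general. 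Second, even granting the gluing, the PMT cannot produce (B): since $\Phi_+ = \frac{1+V}{1+V_0} = \frac{2}{1+V_0}\left(1-\frac{m}{2}|x|^{2-n}+\dots\right)$ at infinity, the conformal change cancels the mass of $g$ exactly, and after the constant rescaling the doubled manifold has ADM mass identically $0$ --- precisely as in the classical Bunting--Masood-ul-Alam construction. Thus $\hat m_{\hat M}\ge 0$ carries no information and involves no $V_0$-dependence; the rigidity case of the PMT would instead force $\hat M$ to be flat, so this doubling is a uniqueness device (available only when the corner condition is already known, e.g. for horizons or special photon spheres), not a mechanism that outputs an inequality such as (B).

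For contrast, the paper needs no Penrose-type estimate at all: it shows that $g_- = V^{\frac{4}{n-2}}g$ with potential $V^{-1}$ is again an asymptotically flat static system, now of ADM mass $-m$ by \eqref{mass_formula}; it checks, using $m>0$ and the maximum principle (so that $\inf_M V = V_0$), that $\Sigma$ remains outer-minimizing for $g_-$; it then applies \thref{minkowski} to $(M,g_-,V^{-1})$, and in translating back to $g$ the mass terms cancel, giving \eqref{static_level_set} once $V|_\Sigma = V_0$ is factored out. Rigidity follows by applying the equipotential case of \thref{cmc_rigidity} to the conformal system and undoing the conformal change. If you wish to salvage your route, you would need an independent proof of (B) (with its own equality analysis or combined, as you do, with equality in (A)), which neither your sketch nor the paper provides.
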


\begin{remark}
This inequality also applies in Euclidean space $(\mathbb{R}^{n}, \delta)$, for if we set $V \equiv 1$ we see that \eqref{static_level_set} is the Euclidean Minkowski inequality \eqref{euclidean_minkowski}.
\end{remark}
\begin{remark}
For any asymptotically flat static system $(M^{n},g,V)$, the level set $\{ V = V_{0} \} \subset M^{n}$ is automatically a smooth, connected, and outer-minimizing hypersurface whenever $V_{0}$ is sufficiently close to $1$, see Section 2.2. Therefore, \thref{level_set_rig} gives a level-set inequality for these $V_{0}$.
\end{remark}

In \cite{level_set_static}, Agostiniani and Mazzieri also consider level-set inequalities in static manifolds with mass $m >0$. In their Theorem 2.11(i), they establish a Willmore-type inequality of the form

\begin{equation} \label{willmore}
    V_{0} \leq \frac{1}{(n-1)w_{n-1}^{\frac{1}{n-1}}} \left(\int_{\Sigma} H^{n-1} d\sigma \right)^{\frac{1}{n-1}}
\end{equation}
for every $n \geq 4$ when $\Sigma= \{ V=V_{0} \}$ is a level set of the bounded static potential $V$. When $n< 8$ and $\Sigma$ is outer-minimizing, inequality \eqref{static_level_set} immediately implies \eqref{willmore} (and indeed, an analogous inequality for any $p \in (1,n-1)$) via H\"older's inequality. Moreover, \eqref{static_level_set} extends \eqref{willmore} to the $n=3$ case. In contrast to their approach, our proof of \thref{level_set_rig} does not utilize the level-set flow.

Our first application of these theorems is to the problem of uniqueness for static metric extensions. This problem is motivated by R. Bartnik's proposal of a quasi-local mass for compact, spacelike, time-symmetric submanifolds of spacetime-- see \cite{mass_3_metrics} and \cite{static_extension} for a review. Here, we would like to show that any asymptotically flat static manifold with a boundary that is both intrinsically round and CMC is rotationally symmetric. Due to work by Anderson in \cite{static_existence} and Huang-An in \cite{existence_static_extension}, we know this is true whenever this manifold is assumed to be sufficiently close to the Schwarzschild manifold in a suitable topology. In this paper, we approach the question of global uniqueness using a stability assumption on the boundary.

The stability operator $S_{\Sigma}$ of a CMC hypersurface $\Sigma^{n-1}$ of a Riemannian manifold $(M^{n},g)$ is

\begin{eqnarray} \label{stability_operator}
    S_{\Sigma}: C^{\infty}_{0} (\Sigma) \rightarrow C^{\infty}_{0}(\Sigma),  &\text{  where  }& C^{\infty}_{0}(\Sigma) = \{ \phi \in C^{\infty} (\Sigma) | \int_{\Sigma} \phi d \sigma = 0 \}, \nonumber \\ 
    S_{\Sigma} \phi &=& -\Delta_{\Sigma} \phi - (|h|^{2} + \text{Ric}(\nu,\nu)) \phi.
\end{eqnarray}
A result by Miao (\cite{boundary_effects}, Corollary 2), establishes for $n=3$ the uniqueness of static metric extensions for the Bartnik data of the Euclidean sphere under the condition $\lambda_{1}(S_{\Sigma}) \geq 0$ on the first eigenvalue of $S_{\Sigma}$. Here, we define a natural extension of this stability condition in terms of mean curvature and area to apply to more general CMC Bartnik data.

\begin{definition} \thlabel{stability}
Let $(M^{n},g)$ be a Riemannian manifold, and let $\Sigma^{n-1} \subset M^{n}$ be a CMC hypersurface with $H_{\Sigma}=H_{0}$. Write $r_{0}= \left(\frac{|\Sigma|}{w_{n-1}} \right)^{\frac{1}{n-1}}$ as before, and define the constant 

\begin{equation} \label{m_0}
    m_{0} = \frac{r_{0}^{n-2}}{2} \left( 1 - \frac{r_{0}^{2}}{(n-1)^{2}} H_{0}^{2} \right).
\end{equation}
We say that $\Sigma$ is \textbf{Schwarzschild stable} in $(M^{n},g)$ if the first eigenvalue of $S_{\Sigma}$ satisfies 
\begin{eqnarray*}
    \lambda_{1}(S_{\Sigma}) \geq \frac{n(n-1)m_{0}}{r_{0}^{n}}.
\end{eqnarray*}
\end{definition}
\begin{remark}
When $n=3$, the constant $m_{0}$ equals the Hawking mass $m_{H}(\Sigma)$ of a CMC surface $\Sigma$. \thref{stability} is motivated by a natural relationship between $\lambda_{1}(S_{\Sigma})$ and $m_{H}(\Sigma)$ that we will discuss in Section 5.
\end{remark}

$\lambda_{1}(S_{\Sigma}) = \frac{n(n-1)m_{0}}{r_{0}^{n}}$ for a coordinate sphere in a Schwarzschild manifold of radius $r_{0}$ and mean curvature $H_{0}$, so this definition requires that $\Sigma^{n-1} \subset (M^{n},g)$ is ``at least as stable" as the Schwarzschild sphere with the same mean curvature and area.

\begin{theorem}[Uniqueness of Static Extensions for Constant, Schwarzschild-Stable Bartnik Data] \thlabel{bartnik}
 Let $(M^{n},g,V)$, $3 \leq n \leq 7$, be an asymptotically flat static system with outer-minimizing boundary $\partial M = \Sigma \cong \mathbb{S}^{n-1}$. Suppose on $\Sigma$ that

 \begin{eqnarray}
    \sigma_{ij} &=& r_{0}^{2} \sigma^{\mathbb{S}^{n-1}}_{ij} \label{bartnik_data} \\
     H_{\Sigma} &=& H_{0} \nonumber
 \end{eqnarray}
 for constants $r_{0}$ and $H_{0}$ satisfying $0 < H_{0} \leq (n-1)r_{0}^{-1}$. Suppose further that $\Sigma$ is Schwarzschild stable in $(M^{n},g)$. Then $(M^{n},g)$ is isometric to the exterior of $\{ r = r_{0} \}$ in the Schwarzschild manifold of mass $m_{0}$ for $m_{0}$ defined in \eqref{m_0}.
\end{theorem}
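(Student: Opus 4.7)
The plan is to use the Schwarzschild stability hypothesis twice in succession to force equality in the static Minkowski inequality \eqref{inequality}, at which point \thref{cmc_rigidity} yields the isometry with Schwarzschild.

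First I will show that $\Sigma$ is totally umbilical. Since $(\Sigma, \sigma)$ is a round $(n-1)$-sphere of radius $r_0$, the first nonzero eigenvalue of $-\Delta_\Sigma$ is $(n-1)/r_0^2$ and its eigenspace is spanned by the restricted coordinate functions $x^1, \ldots, x^n$, which satisfy $\sum_{i=1}^n (x^i)^2 \equiv r_0^2$. Testing the Rayleigh characterization of $\lambda_1(S_\Sigma)$ on each $x^i$ and summing over $i$ produces an upper bound on $\int_\Sigma (|h|^2 + \mathrm{Ric}(\nu,\nu)) \, d\sigma$. Independently, since the static equations \eqref{static_equations} force $R_g = 0$, the Gauss identity yields $|h|^2 + \mathrm{Ric}(\nu,\nu) = \tfrac{1}{2}|\mathring{h}|^2 + \tfrac{n H_0^2}{2(n-1)} - \tfrac{R_\Sigma}{2}$ with $R_\Sigma = (n-1)(n-2)/r_0^2$. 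Inserting the definition \eqref{m_0} of $m_0$, the non-$\mathring{h}$ contributions cancel exactly against the Schwarzschild stability constant, leaving $\int_\Sigma |\mathring{h}|^2 \, d\sigma \leq 0$ and therefore $\mathring{h} \equiv 0$.

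Next I will show that $V|_\Sigma$ is constant. For a CMC totally umbilical $\Sigma$, the Codazzi equation forces $\mathrm{Ric}(X, \nu) = 0$ for every $X \in T\Sigma$; combined with the static equation $\nabla^2 V = V \mathrm{Ric}$ and the Weingarten identity $\nabla_X \nu = \tfrac{H_0}{n-1} X$, this gives $\nabla_\Sigma(\partial_\nu V) = \tfrac{H_0}{n-1} \nabla_\Sigma V$, so $\partial_\nu V = \tfrac{H_0}{n-1} V + c$ for a constant $c$ on the connected $\Sigma$. Substituting into the restriction of $\Delta V = 0$ to $\Sigma$ and using that $|h|^2$ and $\mathrm{Ric}(\nu,\nu)$ are constant (by umbilicity, Gauss, and roundness), I obtain $-\Delta_\Sigma (V - \overline{V}) = \gamma (V - \overline{V})$ where $\gamma = |h|^2 + \mathrm{Ric}(\nu,\nu)$ is precisely the potential appearing in $S_\Sigma$. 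Testing the stability inequality on the mean-zero function $V - \overline{V}$ therefore collapses its quadratic form to zero, leaving $\lambda_1(S_\Sigma) \int_\Sigma (V - \overline{V})^2 \, d\sigma \leq 0$. The assumption $0 < H_0 < (n-1)/r_0$ makes $m_0 > 0$ and hence $\lambda_1(S_\Sigma) > 0$, so $V \equiv V_0$ on $\Sigma$.

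Finally I pin down the mass. With $V|_\Sigma = V_0$, the relation $\partial_\nu V = \tfrac{H_0}{n-1} V + c$ makes $\partial_\nu V$ constant as well; integrating $-\Delta_\Sigma V = \gamma V + H_0 c$ determines $c$, and the standard static mass formula $\int_\Sigma \partial_\nu V \, d\sigma = (n-2) w_{n-1} m$ then pins down $V_0 = \tfrac{m H_0 r_0}{(n-1) m_0}$. Substituting this value into \eqref{inequality} reduces the static Minkowski inequality to $m \geq m_0 > 0$, and \thref{level_set_rig} (which now applies) reduces to $m \leq m_0$. Hence $m = m_0$ and equality is attained in \eqref{inequality}, so \thref{cmc_rigidity} completes the proof. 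I expect the main obstacle to be the two stability arguments: both depend on the Schwarzschild stability constant being tuned so that the non-$|\mathring{h}|^2$ and non-$(V - \overline{V})^2$ contributions cancel exactly, which ultimately traces back to the arithmetic in the definition \eqref{m_0} of $m_0$.
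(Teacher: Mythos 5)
Your proposal is correct and follows essentially the same route as the paper: coordinate-function test functions in the stability operator plus the Gauss equation (with $R_{g}=0$) force $\mathring{h}\equiv 0$, the Codazzi and static surface equations then make $V|_{\Sigma}$ and $\frac{\partial V}{\partial \nu}$ constant, and the mass formula \eqref{mass_formula} combined with \thref{level_set_rig} forces saturation of \eqref{inequality}, after which \thref{cmc_rigidity} gives the isometry. The only cosmetic differences are that you close the equipotential step by testing $S_{\Sigma}$ a second time on the mean-zero function $V-\overline{V}$, whereas the paper compares the resulting eigenvalue equation with the first eigenvalue $\frac{n-1}{r_{0}^{2}}$ of the round sphere, and you organize the final step as the two-sided bound $m_{0}\leq m\leq m_{0}$ rather than bounding the left-hand side of \eqref{inequality} directly.
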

\begin{remark}
The ADM mass for the Schwarzschild extension of the Bartnik data \eqref{bartnik_data} is $m=m_{0}$, and so data sets with $H_{0} > (n-1)r_{0}$ or equivalently $m_{0} < 0$ are not natural to study from the standpoint of the Bartnik quasi-local mass.
\end{remark}
When $n=3$, the notion of Schwarzschild stability may be applied to other uniqueness questions about CMC boundary data. For example, we find that only the Schwarzschild $3$-manifold contains an equipotential, Schwarzschild-stable CMC sphere.

\begin{theorem}[Uniqueness of Equipotential CMC Spheres in Static $3$-Manifolds] \thlabel{equi_cmc}
Let $(M^{3},g,V)$ be an asymptotically flat static system with outer-minimizing boundary $\partial M= \Sigma \cong \mathbb{S}^{2}$. Suppose that 

\begin{enumerate}
    \item $\Sigma^{2}$ is Schwarzschild-stable CMC with $m_{0} >0$, AND
    \item $V|_{\Sigma} = V_{0}$ is constant.
\end{enumerate}
Then $(M^{3},g)$ isometric to the exterior of $\{ r = r_{0} \}$ in the Schwarzschild manifold of mass $m_{0}$, where $r_{0}= (\frac{|\Sigma|}{w_{2}})^{\frac{1}{2}}$. 
\end{theorem}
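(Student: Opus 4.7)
The plan is to show that equality is forced in the static Minkowski inequality of \thref{minkowski}, so that the CMC rigidity statement of \thref{cmc_rigidity} produces the desired isometry. The two Minkowski-type inequalities already proved above will combine to give a lower bound $m \geq m_{0}$, while Schwarzschild stability will be used to obtain the matching upper bound $m \leq m_{0}$.

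For the lower bound, since $\partial M = \Sigma$ is connected, outer-minimizing, and equipotential with $V|_{\Sigma} = V_{0} > 0$, I would first apply \thref{level_set_rig}. With $n=3$ and $H \equiv H_{0}$ on $\Sigma$, this simplifies to
\begin{equation*}
V_{0} \ \leq\ \frac{H_{0} r_{0}}{2}, \qquad r_{0} = \Big(\frac{|\Sigma|}{w_{2}}\Big)^{1/2}.
\end{equation*}
Since $\Sigma$ is also CMC, the static Minkowski inequality of \thref{minkowski} reduces to $\tfrac{1}{2} V_{0} H_{0} r_{0}^{2} + 2m \geq r_{0}$. Combining these and recalling that $2 m_{0} = r_{0} - \tfrac{1}{4} H_{0}^{2} r_{0}^{3}$ yields
\begin{equation*}
2m \ \geq\ r_{0} - \tfrac{1}{2} V_{0} H_{0} r_{0}^{2} \ \geq\ r_{0} - \tfrac{1}{4} H_{0}^{2} r_{0}^{3} \ =\ 2 m_{0},
\end{equation*}
so $m \geq m_{0}$, with equality forcing both $V_{0} = \tfrac{1}{2} H_{0} r_{0}$ and equality in the static Minkowski inequality.

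For the reverse inequality I would invoke the Hawking-mass upper bound for Schwarzschild-stable CMC $2$-spheres in asymptotically flat static $3$-manifolds advertised in the introduction: the stability estimate $\lambda_{1}(S_{\Sigma}) \geq 6 m_{0} / r_{0}^{2}$, combined with $R_{g} = 0$, the Gauss--Bonnet theorem for $\Sigma \cong \mathbb{S}^{2}$, and the static identity $\nabla^{2} V = V \mathrm{Ric}_{g}$, should deliver $m \leq m_{H}(\Sigma) = m_{0}$. Combined with the lower bound this forces $m = m_{0}$, and hence equality in the static Minkowski inequality with the CMC boundary $\Sigma$; by \thref{cmc_rigidity}, $(M^{3}, g)$ is then isometric to the exterior of $\{ r = r_{0} \}$ in the Schwarzschild manifold of mass $m = m_{0}$. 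The main obstacle is the upper bound $m \leq m_{0}$: converting Schwarzschild stability into a mass inequality requires the right mean-zero test function in the stability operator (naturally built from $V$ or $V_{\nu}$ along $\Sigma$) and careful bookkeeping to balance the Dirichlet, curvature, and topological terms that arise.
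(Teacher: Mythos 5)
Your overall skeleton (force equality in \eqref{inequality} for the CMC boundary and then quote \thref{cmc_rigidity}) matches the paper, and your ``lower bound'' chain is essentially half of the paper's final computation. But there are two genuine gaps. First, \thref{level_set_rig} has $m>0$ as a hypothesis, and at the point where you invoke it you only know $m_{0}>0$; positivity of the ADM mass is not assumed in \thref{equi_cmc} and must be proved before the equipotential Minkowski inequality can be used. Second, and more seriously, the upper bound you propose, $m\le m_{H}(\Sigma)=m_{0}$, is not an available tool and goes in the wrong direction: Huisken--Ilmanen gives $m\ge m_{H}(\Sigma)$ for outer-minimizing $\Sigma$, and the paper's \thref{hawking_comparison} gives $Q(\Sigma)\ge m_{H}(\Sigma)$, i.e.\ an upper bound \emph{on the Hawking mass}, not an upper bound on the ADM mass by the Hawking mass. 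Nothing in the paper (or in Christodoulou--Yau/Miao) bounds $m$ above by $m_{H}(\Sigma)$ under a stability hypothesis, and your sketch never actually uses the content of Schwarzschild stability.

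The missing mechanism, which repairs both gaps at once, is umbilicity. By \thref{eigenvalue} (the Christodoulou--Yau-type test-function argument) with $R_{g}=0$, Schwarzschild stability forces $\mathring{h}=0$, so $h_{ij}=\tfrac{H_{0}}{2}\sigma_{ij}$. Since $V|_{\Sigma}=V_{0}$ is constant, the static equation restricted to $\Sigma$ gives $\tfrac{\partial V}{\partial\nu}=-\tfrac{V_{0}}{H_{0}}\mathrm{Ric}(\nu,\nu)$, and the Gauss equation together with Gauss--Bonnet turn the mass formula \eqref{mass_formula} into the exact identity $m=\tfrac{2V_{0}}{H_{0}r_{0}}m_{0}>0$. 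This is what licenses \thref{level_set_rig}; its conclusion $V_{0}\le\tfrac12 H_{0}r_{0}$ then yields both $m\le m_{0}$ and, plugged into the left-hand side of \eqref{inequality}, the reverse bound $\tfrac{1}{2w_{2}}\int_{\Sigma}VH\,d\sigma+2m\le r_{0}$, so equality holds in \eqref{inequality}; \thref{cmc_rigidity} then applies, and at equality $V_{0}=\tfrac12 H_{0}r_{0}$, whence $m=m_{0}$. Without the umbilicity step you have no way to control $m$ from above, and the route you propose for that control rests on an inequality that does not hold.
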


The next application of these theorems is to photon surfaces, a notion originally introduced by Claudel et. al in \cite{geometry_photon_surfaces} and later given a geometric definition in \cite{photon_sphere_uniqueness}.

\begin{definition}
Let $(L^{n+1},\overline{g})$ be a Lorentzian manifold. An embedded hypersurface $P^{n} \subset L^{n+1}$ is a \textbf{photon surface} if it is timelike and totally umbilical in $(L^{n+1},\overline{g})$. 
\end{definition}
The above definition is equivalent to $P^{n}$ being null totally geodesic in $(L^{n+1}, \overline{g})$, meaning that $P^{n}$ traps all null geodesics initially tangent to it-- c.f. \cite{umbilic} and \cite{photon_sphere_uniqueness}. For example, photons in the $n+1$-dimensional, $m>0$ Schwarzschild spacetime orbit at a fixed radius $r=(nm)^{\frac{1}{n-2}}$ outside the black hole. Therefore, the surface

\begin{equation*}
    P^{n} = \{ r= (nm)^{\frac{1}{n-2}} \} \subset (\mathbb{S}^{n-1} \times (r_{m},\infty) \times \mathbb{R}, -V_{m}^{2}(r) dt^{2} + g_{m})
\end{equation*}
is a photon surface in the Schwarzschild spacetime. Whether the Schwarzschild spacetime is the only static spacetime that contains photon surfaces is an open question. Cederbaum established the first result in this direction in \cite{photon_sphere_uniqueness}, and \cite{photon_sphere_positive_mass}, \cite{photon_surface_equipotential}, \cite{photon_sphere_spinorial}, and \cite{photon_surface_perturbative} contain generalizations of her result. Currently, the most optimal photon surface uniqueness theorems for Schwarzschild from \cite{photon_surface_equipotential} and \cite{photon_sphere_spinorial} assume that $P^{n} \subset (L^{n+1}, \overline{g})$ has \textit{equipotential time slices}, i.e. that the static potential $V$ is constant over the time slice $\Sigma^{n-1}(t_{0}) =P^{n} \cap \{ t = t_{0} \} \subset M^{n} \times \{ t=t_{0}\}$ for every $t_{0} \in \mathbb{R}$.

In this paper, we derive a new (to our knowledge) uniqueness theorem for equipotential photon surfaces with small Einstein-Hilbert energy. The (normalized) Einstein-Hilbert energy of an $n-1$-dimensional Riemannian manifold $(\Sigma^{n-1},\sigma)$ is 

\begin{equation}
    E(\Sigma) = \left( |\Sigma| \right)^{\frac{3-n}{n-1}} \int_{\Sigma} R_{\sigma} d\sigma. 
\end{equation}
We write $E(\Sigma)$ in dimension $n-1$ since we consider the Einstein-Hilbert energy of a hypersurface in $(M^{n},g)$, and we denote the Einstein-Hilbert energy of the standard sphere by $E(\mathbb{S}^{n-1})$. When $n=3$, $E(\Sigma) \leq Y(\mathbb{S}^{2})$ as a trivial consequence of the Gauss-Bonnet theorem. When $n>3$, we know from seminal works of Schoen \cite{conformal_deformation} and Aubin \cite{yamabe_french} that $E(\Sigma) \leq E(\mathbb{S}^{n-1})$ whenever $\sigma$ minimizes $E$ within its conformal class. All such minimizers are constant scalar curvature metrics, but not all constant scalar curvature metrics necessarily have small Einstein-Hilbert energy-- see \cite{brendle2010recent} for a review.

We also note that the assumption in \cite{photon_surface_equipotential} and \cite{photon_sphere_spinorial} that $V$ is constant over every time of $P^{n}$ slice may be slightly weakened. When $V$ is constant on every slice $P^{n} \times \{ t= t_{0} \}$, the normal derivative $\frac{\partial V}{\partial \nu}$ of $V$ in $P^{n} \cap \{ t=t_{0}\} \subset (M^{n},g)$ must be identically constant, see Section 4 in \cite{photon_surface_equipotential}. One recovers exactly the same boundary condition as in those works when $V$ and $\frac{\partial V}{\partial \nu}$ are assumed to be constant over only one time slice. Therefore, the equipotential time slice assumption can be replaced with an assumption over a single slice. 

\begin{theorem}[Uniqueness of Equipotential Photon Surfaces with Small Einstein-Hilbert Energy] \thlabel{photon_surface}
Let $(L^{n+1}, \overline{g})= (M^{n} \times \mathbb{R}, -V^{2}(x)dt^{2} + g)$, $3 \leq n \leq 7$, be a static spacetime of the form \eqref{static_product}, where $(M^{n},g,V)$ is an asymptotically flat static system of ADM mass $m >0$. Let $P^{n} \subset (L^{n+1}, \overline{g})$ be a two-sided photon surface. Suppose there exists a $t_{0} \in \mathbb{R}$ such that the time slice $\Sigma^{n-1}= P^{n} \cap \{ t= t_{0} \} \subset M^{n} \times \{ t= t_{0} \}$ is outer-minimizing and is the only boundary component of the exterior region of $(M^{n},g)$. Suppose further that on $\Sigma^{n-1}$

\begin{enumerate}
    \item $V|_{\Sigma}= V_{0}$, AND
    \item $\frac{\partial V}{\partial \nu} = \rho_{0}$, AND
    \item $E(\Sigma) \leq E(\mathbb{S}^{n-1}) = (n-1)(n-2)w_{n-1}^{\frac{2}{n-1}}$.
\end{enumerate}
for constants $\rho_{0}$, $V_{0}$ (note that (3) is redundant when $n=3$). Then $(L^{n+1},\overline{g})$ isometrically embeds into the Schwarzschild spacetime

\begin{equation*}
    (\tilde{L}^{n+1}, \overline{g}_{m}) = (\mathbb{S}^{n-1} \times (r_{m}, \infty)  \times \mathbb{R}, -V_{m}(r)^{2}dt^{2} + g_{m}).
\end{equation*}
\end{theorem}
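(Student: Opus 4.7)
The plan is to apply the equality case of \thref{level_set_rig}, showing that equality is forced in \eqref{static_level_set} on $\Sigma$. The photon surface structure, combined with the single-slice equipotential hypothesis, provides exactly the geometric rigidity needed---in the form of a Gauss--Bonnet identity on $\Sigma$---to combine \thref{minkowski} and \thref{level_set_rig} into this equality.

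First I decode the photon surface and equipotential conditions on $\Sigma$. At $p \in \Sigma$ pick an orthonormal frame $\{e_0 := V^{-1}\partial_t, e_1, e_2, \nu\}$ for $T_p L^4$ with $\{e_1, e_2\}$ spanning $T_p\Sigma$ and $\nu$ the outward normal of $\Sigma$ in $M^3$; the extra tangent direction to $P^3$ and its spacelike unit normal must take the form $u = \cosh\theta\, e_0 + \sinh\theta\, \nu$ and $N = \sinh\theta\, e_0 + \cosh\theta\, \nu$ for some boost angle $\theta$. Since constant-$t$ slices are totally geodesic in the static spacetime, restricting the photon surface umbilicity $\overline{II}^P = \lambda\, \overline{g}|_P$ to $T\Sigma$ shows that $\Sigma$ is umbilic in $M^3$, with shape operator $f \cdot \mathrm{Id}$ for some function $f$. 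The hypotheses $V|_\Sigma = V_0$ and $\partial V/\partial\nu|_\Sigma = \rho_0$ yield $\nabla V|_\Sigma = \rho_0\, \nu$, whence a direct Hessian computation gives $\nabla^2 V(X, \nu) = 0$ for every $X \in T\Sigma$. The static equation $\nabla^2 V = V\, \text{Ric}$ then forces $\text{Ric}(X, \nu) = 0$ on $\Sigma$. Feeding this into the Codazzi equation for the umbilical $\Sigma$ and using the $3$D Riemann--Ricci decomposition to rewrite $\overline{R}(X, Y, Z, \nu)$ in terms of $\text{Ric}(\cdot, \nu)$ forces $f$ to be constant on $\Sigma$, so $H_\Sigma = 2f =: H_0$ is a positive constant.

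Using $R_g = 0$ (automatic from $\Delta V = 0$) together with $\nabla V|_\Sigma = \rho_0 \nu$, the same Hessian calculation extends to $\nabla^2 V|_{T\Sigma} = \rho_0 f\, g|_\Sigma$ and $\nabla^2 V(\nu, \nu) = -\rho_0 H_0$; the static equation then yields $\text{Ric}|_{T\Sigma} = (\rho_0 H_0/(2V_0))\, g|_\Sigma$ and hence $\text{sec}^g(e_1, e_2) = \rho_0 H_0/V_0$. Gauss's equation for the umbilical $\Sigma$ produces the constant Gaussian curvature $K^\Sigma = \rho_0 H_0/V_0 + H_0^2/4 > 0$, so $\Sigma \cong \mathbb{S}^2$, and Gauss--Bonnet with $|\Sigma| = 4\pi r_0^2$ yields the identity
\begin{equation*}
\frac{\rho_0 H_0 r_0^2}{V_0} + \frac{H_0^2 r_0^2}{4} = 1.
\end{equation*}
Since $V$ is harmonic and $\Sigma$ is the only interior boundary of the exterior region, the divergence theorem gives the mass formula $m = \rho_0 r_0^2$.

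Setting $A := H_0 r_0/2$, \thref{minkowski} reads $V_0 A + 2m/r_0 \ge 1$ while \thref{level_set_rig} reads $V_0 \le A$. Solving the Gauss--Bonnet identity for $m/r_0$ gives $2m/r_0 = V_0(1 - A^2)/A$; substituting into the Minkowski inequality collapses it to $V_0/A \ge 1$, i.e., $V_0 \ge A$. Combined with $V_0 \le A$, this forces $V_0 = A = H_0 r_0/2$, exactly the equality case of \thref{level_set_rig}. Invoking the rigidity conclusion of that theorem identifies the exterior of $\Sigma$ in $M^3$ with the exterior of $\{r = r_0\}$ in the Schwarzschild manifold of mass $m$, and therefore $(L^4, \overline{g}) = (M^3 \times \mathbb{R}, -V^2 dt^2 + g)$ isometrically embeds into the Schwarzschild spacetime. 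The main obstacle is the second step: producing constancy of $H_\Sigma$ from the single-slice equipotential hypothesis requires a careful marriage of photon-surface umbilicity, Codazzi, and the 3D Riemann--Ricci decomposition. Once CMC is established, the Gauss curvature computation, Gauss--Bonnet, and the inequality squeeze are short.
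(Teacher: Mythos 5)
Your core argument is the same as the paper's: extract umbilicity of the time slice from the photon-surface condition together with the totally geodesic constant-time slices, upgrade to CMC via the (contracted) Codazzi equation using $\mathrm{Ric}(X,\nu)=0$, and then squeeze \eqref{inequality} against \eqref{static_level_set} to force equality and invoke rigidity. Your bookkeeping differs only cosmetically: you compute the (constant) Gauss curvature pointwise and use Gauss--Bonnet as an identity together with $m=\rho_0 r_0^2$, and you land on the equality case of \thref{level_set_rig}; the paper instead substitutes $\frac{\partial V}{\partial \nu}=-V_0\,\mathrm{Ric}(\nu,\nu)/H_0$ into the mass formula \eqref{mass_formula}, uses $\int_\Sigma K_\sigma\,d\sigma\le w_2$, and lands on the equality case of \eqref{inequality} via \thref{cmc_rigidity}. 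The two squeezes are algebraically equivalent.

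Two steps in your write-up are asserted rather than proved. First, $H_0>0$ and $K^{\Sigma}=\rho_0H_0/V_0+H_0^{2}/4>0$ (hence $\Sigma\cong\mathbb{S}^{2}$) are not justified at the point where you use them: outer-minimizing only gives $H_0\ge 0$, and the sign of $\rho_0$ has not yet been fixed. This is repairable --- $m=\rho_0 r_0^{2}>0$ gives $\rho_0>0$, and $H_0=0$ contradicts \eqref{static_level_set} since $V_0>0$ --- or avoidable entirely by using Gauss--Bonnet as the inequality $\int_\Sigma K_\sigma\,d\sigma\le w_2$ with no topological claim, as the paper does; the squeeze then still yields $V_0\ge H_0r_0/2$. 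Second, your final sentence jumps from ``the exterior of $\Sigma$ in $M^{3}$ is the Schwarzschild exterior of $\{r=r_0\}$'' to ``$(L^{4},\overline g)$ embeds into the Schwarzschild spacetime.'' The rigidity conclusions of \thref{cmc_rigidity} and \thref{level_set_rig} identify only the region outside $\Sigma$, while $M^{3}$ may continue inside the photon-sphere slice; the paper closes this by invoking analyticity of static metrics to propagate the isometry from $\{V>V_0\}$ to all of $(M^{3},g,V)$ and hence to $(L^{4},\overline g)$. As written, that extension step is missing from your proposal; otherwise the argument is sound.
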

Interestingly, the same consideration for $E(\Sigma)$ appears in Theorem 2.10 of \cite{level_set_static}. There, the authors derive an analogous conclusion for asymptotically flat static systems with horizon boundary under the assumption $E(\Sigma) \leq E(\mathbb{S}^{n-1})$ for all $n \geq 4$. By contrast, the photon surface uniqueness and black hole uniqueness theorems in \cite{photon_surface_equipotential} and \cite{photon_sphere_spinorial} respectively feature weaker boundary conditions-- these do not require any assumptions on $E(\Sigma)$, and they also allow for photon surfaces which are not connected. However, instead of assuming that $(M^{n},g)$ is asymptotically flat, those theorems require $(M^{n},g)$ to be \textit{asymptotically isotropic}. This asymptotic assumption is the same as asymptotic flatness when $n=3$, see \cite{isotropic}. For $n \geq 4$, however, asymptotically isotropic is at least as strong of a decay assumption on $(M^{n},g)$ as asymptotically flat, and it is not known if the two conditions are equivalent. The differing boundary conditions needed in the asymptotically flat and asymptotically isotropic cases are worth noting with this in mind.

Finally, \thref{cmc_rigidity} assumes that either $H_{\Sigma}$ or $V|_{\Sigma}$ is constant, and so it is interesting to ask if the same rigidity result for Schwarzschild spheres still applies without either assumption. For $n=3$, one way to probe this is to consider the functional $Q(\Sigma)$ associated with inequality \eqref{inequality},

\begin{equation} \label{Q}
    Q(\Sigma)= \frac{1}{2} \left(\frac{|\Sigma|}{w_{2}} \right)^{\frac{1}{2}} \left[1 - \frac{1}{2w_{2}} \left(\frac{|\Sigma|}{w_{2}} \right)^{-\frac{1}{2}} \int_{\Sigma} VH d\sigma \right],
\end{equation}
which is bounded above by the mass $m$ of $(M^{3},g)$, and to compare it to the \textit{Hawking mass} of $\Sigma$. Recall that the Hawking mass of $\Sigma^{2} \subset (M^{3},g)$ is the quantity

\begin{equation} \label{hawking}
    m_{H}(\Sigma) = \frac{1}{2} \left( \frac{|\Sigma|}{w_{2}} \right)^{\frac{1}{2}} \left[1 - \frac{1}{2w_{2}} \int_{\Sigma} H^{2} d \sigma \right].
\end{equation}
From the work of Huisken and Ilmanen in \cite{imcf_penrose}, we know that $m_{H}(\Sigma)$ is also bounded above by $m$ for an outer-minimizing $\Sigma^{2}$, and equality in that case occurs only on Schwarzschild coordinate spheres. If $m_{H}(\Sigma) \geq Q(\Sigma)$ on any outer-minimizing $\Sigma^{2}$, then $Q(\Sigma)=m$ would imply that $\Sigma$ is a Schwarzschild coordinate sphere via this rigidity fact. Here, we prove that in fact the \textit{opposite} bound holds generically in asymptotically flat static $3$-manifolds; that is $Q(\Sigma) \geq m_{H}(\Sigma)$ in a broad setting. This hints that there may exist an $(M^{3},g)$ with $m= Q(\Sigma)> m_{H}(\Sigma)$, although we are uncertain on this matter. 

\begin{theorem}[Hawking Mass Comparison] \thlabel{hawking_comparison}
Let $(M^{3},g,V)$ be an asymptotically flat static system with connected, outer-minimizing boundary $\partial M= \Sigma^{2}$. Suppose further that $V|_{\Sigma^{2}} > 0$ and that $\Sigma^{2}$ is also outer-minimizing with respect to the conformal metric
\begin{equation*}
 g_{-}=V^{4}g.   
\end{equation*}
Then

\begin{equation} \label{comparison}
    Q(\Sigma) \geq m_{H}(\Sigma),
\end{equation}
for the functionals defined in \eqref{Q}, \eqref{hawking}. Furthermore, equality holds if and only if $(M^{3},g)$ is isometric to the exterior of $\{ r= r_{0} \}$ in the Schwarzschild manifold of mass $m$.
\end{theorem}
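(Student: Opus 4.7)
The plan is to analyze $\Sigma$ through the conformally rescaled metric $g_- = V^4 g$ and apply Huisken--Ilmanen's weak inverse mean curvature flow in $(M^3, g_-)$, exploiting the additional outer-minimizing hypothesis in $g_-$. The conformal metric is chosen because $(M^3, g_-)$ inherits zero scalar curvature from the static structure: tracing the equation $\nabla^2 V = V\,\mathrm{Ric}_g$ and using $\Delta_g V = 0$ forces $R_g = 0$, and the three-dimensional conformal-change formula then gives $R_{g_-} = V^{-5}(V R_g - 8 \Delta_g V) = 0$. The metric $g_-$ remains asymptotically flat, and expanding $V = 1 - m/r + O(r^{-2})$ at infinity yields the ADM mass $\tilde{m}_{\mathrm{ADM}}(g_-) = -m$.

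Because $R_{g_-} = 0$ and $\Sigma$ is outer-minimizing in $g_-$, Geroch's monotonicity along weak IMCF in $(M^3, g_-)$ gives $\tilde{m}_H(\Sigma) \leq -m$. Using the conformal transformations $|\Sigma|_{g_-} = \int_\Sigma V^4\,d\sigma$ and $\tilde{H} = V^{-2}(H + 4V^{-1}\partial_\nu V)$, this unwinds to the weighted Willmore-type bound
\[
\int_\Sigma \bigl(H + 4V^{-1}\partial_\nu V\bigr)^2 d\sigma \;\geq\; 16\pi + \frac{32\pi m}{\tilde{r}_0}, \qquad \tilde{r}_0 := \sqrt{|\Sigma|_{g_-}/(4\pi)}.
\]
I would combine this with the Komar identity $\int_\Sigma \partial_\nu V\,d\sigma = 4\pi m$ (a consequence of $\Delta_g V = 0$ and $V \to 1$) and the Cauchy--Schwarz inequality in $(L^2(\Sigma), d\tilde{\sigma})$ applied to $V^{-1}$ and $\tilde{H}$. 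Since $\int V^{-1}\tilde{H}\,d\tilde{\sigma} = \int VH\,d\sigma + 16\pi m$ and $\int V^{-2}\,d\tilde{\sigma} = \int V^2\,d\sigma$, this produces
\[
\Bigl(\int_\Sigma VH\,d\sigma + 16\pi m\Bigr)^2 \;\leq\; \int_\Sigma V^2\,d\sigma \cdot \int_\Sigma \bigl(H + 4V^{-1}\partial_\nu V\bigr)^2 d\sigma,
\]
after which the target inequality $\int_\Sigma VH\,d\sigma \leq \sqrt{|\Sigma|/(16\pi)}\,\int_\Sigma H^2\,d\sigma$, which is algebraically equivalent to $Q(\Sigma) \geq m_H(\Sigma)$, should emerge.

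The main obstacle lies in this last algebraic step: both the conformal Hawking bound and the Cauchy--Schwarz inequality give lower bounds on $\int (H + 4V^{-1}\partial_\nu V)^2\,d\sigma$, so extracting a clean upper bound on $\int VH$ in terms of $\sqrt{|\Sigma|}\cdot \int H^2$ requires additional control of the cross-term $\int H\partial_\nu V/V\,d\sigma$ via the static identity $\int H\partial_\nu V\,d\sigma = -\int V\,\mathrm{Ric}(\nu,\nu)\,d\sigma$, together with the comparison $\int V^2 \leq 4\pi r_0 \tilde{r}_0$ (which is another Cauchy--Schwarz, $(\int V^2)^2 \leq |\Sigma|\cdot \int V^4$). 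For rigidity, equality in Cauchy--Schwarz forces $V\tilde{H} = \text{const}$ pointwise on $\Sigma$, and the equality case in Geroch monotonicity forces the approximating IMCF in $g_-$ to sweep out a totally umbilic foliation; together these pin down $\Sigma$ as a Schwarzschild coordinate sphere $\{r = r_0\}$ and identify $(M^3, g)$ as the Schwarzschild exterior of mass $m$.
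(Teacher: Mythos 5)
Your conformal set-up is sound and matches the paper's framework: $g_-=V^4g$ is again an asymptotically flat static (hence scalar-flat) metric with ADM mass $-m$, the mean curvature transforms as $\tilde H = V^{-2}\bigl(H+4V^{-1}\partial_\nu V\bigr)$, and $\int_\Sigma \partial_\nu V\,d\sigma = 4\pi m$ by \eqref{mass_formula}. The gap is in the choice of the monotonicity you run in $(M^3,g_-)$. Geroch/Huisken--Ilmanen monotonicity of the Hawking mass gives only the Willmore-type lower bound $\int_\Sigma \tilde H^2\,d\tilde\sigma \ge 16\pi + 32\pi m/\tilde r_0$, i.e.\ a lower bound on the $L^2$-norm of $\tilde H$. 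What the argument actually needs is a lower bound on the \emph{Minkowski-type} quantity $\int_\Sigma V_-\tilde H\,d\tilde\sigma = \int_\Sigma VH\,d\sigma + 16\pi m$ in terms of the conformal area $|\Sigma|_{g_-}=\int_\Sigma V^4 d\sigma$; Cauchy--Schwarz only converts an $L^1$ bound into an $L^2$ bound, never the reverse, so your inequality $\bigl(\int_\Sigma VH\,d\sigma + 16\pi m\bigr)^2 \le \int_\Sigma V^2 d\sigma \cdot \int_\Sigma \tilde H^2 d\tilde\sigma$ and the Geroch bound are both lower bounds on $\int_\Sigma \tilde H^2 d\tilde\sigma$ and cannot be chained into an upper bound on $\int_\Sigma VH\,d\sigma$. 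You flag this yourself as ``the main obstacle,'' but the proposed repairs do not close it: $\int_\Sigma H\,\partial_\nu V\,d\sigma = -\int_\Sigma V\,\mathrm{Ric}(\nu,\nu)\,d\sigma$ has no sign in general, and $\bigl(\int_\Sigma V^2 d\sigma\bigr)^2 \le |\Sigma|\int_\Sigma V^4 d\sigma$ is useless without an upper bound on $\int_\Sigma V^4 d\sigma$, which the Hawking-mass monotonicity does not supply. Since the main inequality is not established, the rigidity discussion is likewise unsupported.

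The fix is to replace the Hawking-mass monotonicity in $g_-$ with McCormick's static Minkowski inequality (\thref{minkowski}) applied to the conformal static system $(M^3,g_-,V^{-1})$; this is precisely \thref{conformal_rig}, which for $n=3$ yields $\frac{1}{2w_2}\int_\Sigma VH\,d\sigma \ge \bigl(\int_\Sigma V^4 d\sigma / w_2\bigr)^{1/2}$, i.e.\ exactly the ``reverse'' bound your scheme is missing (the $-2m_-$ term cancels the $16\pi m$ contribution automatically). From \eqref{conformal_inequality}, H\"older's inequality $\int_\Sigma V^2 d\sigma \le |\Sigma|^{1/2}\bigl(\int_\Sigma V^4 d\sigma\bigr)^{1/2}$ and Cauchy--Schwarz $\int_\Sigma VH\,d\sigma \le \bigl(\int_\Sigma V^2 d\sigma\bigr)^{1/2}\bigl(\int_\Sigma H^2 d\sigma\bigr)^{1/2}$ give $\bigl(\int_\Sigma V^2 d\sigma\bigr)^{1/2} \le \frac12\bigl(|\Sigma|/w_2\bigr)^{1/2}\bigl(\int_\Sigma H^2 d\sigma\bigr)^{1/2}$ and hence $\int_\Sigma VH\,d\sigma \le \frac12\bigl(|\Sigma|/w_2\bigr)^{1/2}\int_\Sigma H^2 d\sigma$, which is $Q(\Sigma)\ge m_H(\Sigma)$; equality then forces $V|_\Sigma$ and $H_\Sigma$ constant with $V_0=\frac12\bigl(|\Sigma|/w_2\bigr)^{1/2}H_0$, so rigidity follows from the equality case of \thref{level_set_rig} rather than from Geroch rigidity.
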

The role that the conformal metric $g_{-}$ plays here will become clear in Section 4. The condition that $\Sigma^{2}$ is $g_{-}$-outer-minimizing is automatic if $V|_{\Sigma}=V_{0}$ and $m >0$, as we will show in that section. It would be interesting to see if a weaker requirement, such as $\frac{\partial V}{\partial \nu} >0$ on $\Sigma^{2}$, also ensures this property. 

\subsection{Outline}
We now summarize the structure and approach of the paper. In Section 2, we review the neccessary background on asymptotically flat static systems, weak inverse mean curvature flow, and outer-minimizing hypersurfaces, as well as notational conventions. In Section 3, we prove \thref{cmc_rigidity}. The key points of the proof are (a) the asymptotic convergence of weak inverse mean curvature flow completely determines the induced metric on each leaf of the umbilical foliation, and (b) the mean curvature of these leaves is uniquely determined by the induced metric and mean curvature of the boundary. The latter fact arises from the de-coupling of the evolution equation for mean curvature from the other evolving quantities. Because of this, well-posedness of the initial value problem for quasi-linear parabolic equations guarantees uniqueness. 

In Section 4, we prove \thref{level_set_rig}. We achieve this by scaling the metric $g$ by a conformal factor involving the potential function $V$, an approach with a rich history in the study of static manifolds-- see \cite{multiple_static_black_hole}, \cite{photon_sphere_positive_mass}, \cite{level_set_static}, \cite{photon_sphere_spinorial}, and \cite{structure_space_solutions}. Here, we consider the metric $g_{-} = V^{\frac{4}{n-2}} g$, which is also an asymptotically flat static metric on $M^{n}$. We obtain inequality \eqref{static_level_set} by applying inequality \eqref{inequality} with respect to this conformally modified static system. This is possible thanks in part to the elliptic maximum principle. In Section 5, we prove \thref{bartnik} on the uniqueness of static extensions for constant, Schwarzschild stable Bartnik data. The stability assumption allows us to show via local arguments that the boundary $\Sigma$ is totally umbilical and equipotential within the extension $(M^{n},g)$. In turn, we apply the level-set inequality \thref{level_set_rig} to show that the Minkowski inequality must be saturated on $(M^{n},g)$. We prove \thref{photon_surface} and \thref{equi_cmc} with essentially the same approach. In Section 6, we prove \thref{hawking_comparison}. We immediately obtain an estimate from below on the $L^{2}$ norm of $H_{\Sigma}$ using the conformal approach detailed in Section 4.

\vspace{0.5cm}

\noindent \textbf{Acknowledgements} Ye-Kai Wang is supported by Taiwan NSTC grant 109-2628-M-006-001-MY3. We would also like to thank Professor Mao-Pei Tsui of National Taiwan University for recommending the Christodoulou-Yau paper \cite{quasi_local_remarks}, and Professor Carla Cederbaum of T\"ubingen University for helpful email conversations on the topic of Schwarzschild rigidity.

    \section{Preliminaries}

We first clarify some notation throughout this paper. The ambient metric is denoted by $g$ or $g_{ij}$ and its scalar curvature by $R_{g}$. The Hessian $\nabla^{2}f$ with respect to $g$ for $f \in C^{\infty}(M)$ is denoted using bracket notation, i.e.

\begin{equation*}
    \nabla^{2} f (X,Y) = \langle \nabla_{X} \nabla f, Y \rangle \hspace{2cm} X,Y \in \Gamma(TM). 
\end{equation*}
The induced metric on a $C^{\infty}$ hypersurface $\Sigma^{n-1} \subset M^{n}$ is denoted by $\sigma$ or $\sigma_{ij}$ and its scalar curvature by $R_{\sigma}$. The standard sphere metric is denoted $\sigma^{\mathbb{S}^{n-1}}$, and its area is denoted by $w_{n-1}$. The second fundamental form of $\Sigma$,

\begin{equation*}
    h(X,Y)= -\langle \nu, \nabla_{X} Y \rangle = \langle \nabla_{X} \nu, Y \rangle \hspace{2cm} X,Y \in \Gamma(T\Sigma),
\end{equation*}
is taken with respect to the unit normal $\nu$ that points toward infinity. Occasionally, we will consider $\Sigma^{n-1}$ to be the immersed image of an abstract smooth manifold, and we will abuse notation by also denoting this manifold by $\Sigma$. 

\subsection{Asymptotically Flat Static Systems}
In general relativity, an isolated gravitating static system is modelled by a metric $g$ and potential $V$ which are asymptotic to the Euclidean metric and $1$, respectively.
\begin{definition}[Asymptotically Flat Static Systems] \thlabel{AF}
A Riemannian manifold $(M^{n},g)$ is \textbf{asymptotically flat} if there exists a compact set $K$ and a diffeomorphism $x=(x^{1},\dots,x^{n}): M^{n} \setminus K \rightarrow \mathbb{R}^{n} \setminus B_{1}(0)$ such that the metric $g$ satisfies the asymptotic expansion

\begin{equation} \label{g_expansion}
    g_{kl} = \delta_{kl} + \eta_{kl}, \hspace{1cm} \text{  with      } \eta_{kl} \in O_{2} (|x|^{-q}) \hspace{1cm} \text{  as  } |x| \rightarrow \infty,
\end{equation}
for some $q > \frac{n-2}{2}$ and for every $k,l \in \{ 1, \dots, n \}$. Here, for a function $f \in C^{\infty}(M \setminus K)$ we write
\begin{equation*}
    f \in O_{k}(|x|^{-q}) \hspace{1cm} \text{   if   } \hspace{1cm} \sum_{|J| \leq k} |x|^{q + |J|} |\partial^{J} f| = O(1) \hspace{1cm} \text{  as  } |x| \rightarrow \infty
\end{equation*}
in the coordinates $(x^{1}, \dots, x^{n})$.

Furthermore, if an asymptotically flat $(M^{n},g)$ admits a bounded, non-zero static potential $V$, then WLOG scaling $V$ so that

\begin{equation*}
    \lim_{|x| \rightarrow \infty} V(x) = 1
\end{equation*}
within the coordinates $(x^{1}, \dots, x^{n})$, we refer to the triple $(M^{n},g,V)$ as an \textbf{asymptotically flat static system}.
\end{definition}
\begin{remark}
Definitions of asymptotic flatness typically also include an integrability assumption on the scalar curvature $R_{g}$. However, taking the trace of the first equation in \eqref{static_equations}, we see that $R_{g} \equiv 0$ when $(M^{n},g)$ is static. Therefore, this assumption is redundant for our purposes.
\end{remark}
It is a non-trivial fact that a bounded, non-zero static potential $V$ is asymptotic to a constant within an asymptotically flat coordinate chart, c.f. for example Proposition B.4 in \cite{static_min}, and since the equations \eqref{static_equations} are linear in $V$ we may scale this constant to be $1$. Moreover, the asymptotic expansion of $V$ is given in \cite{static_min} Proposition B.4 as 

\begin{equation} \label{V_expansion}
    V(x) = 1 - m |x|^{2-n} + w(x), \hspace{1cm} \text{  with      } w \in o(|x|^{2-n}) \hspace{1cm} \text{  as  } |x| \rightarrow \infty,
\end{equation}
for a constant $m \in \mathbb{R}$. \cite{static_min} Proposition B.4 also shows within a chart $(x^{1},\dots,x^{n})$ satisfying \eqref{g_expansion} that

\begin{equation*}
    m = \frac{1}{2(n-1)w_{n-1}} \lim_{r \rightarrow \infty} \int_{|x|=r} \sum_{i,j=1}^{n} (\partial_{i} g_{ij} - \partial_{j} g_{ii}) \nu^{j} d\sigma. 
\end{equation*}
That is, the coefficient $m$ in \eqref{V_expansion} equals the \textit{ADM mass} of the manifold $(M^{n},g)$, which Bartnik shows in \cite{Bartnik1986TheMO} is a well-defined invariant of an asymptotically flat manifold in all dimensions. Because the ADM mass is encoded within the expansion of $V$, $m$ may also be shown to equal a boundary integral. This is observed by taking the limit of the normal derivative of $V$ over coordinate spheres at infinity and applying \eqref{V_expansion}:



\begin{equation*}
    \lim_{r \rightarrow \infty} \int_{|x|=r} \frac{\partial V}{\partial \nu} d\sigma = \lim_{r \rightarrow \infty} \int_{|x|=r} (n-2) m |x|^{1-n} d\sigma = (n-2) w_{n-1} m.
\end{equation*}
Since $V$ is harmonic with respect to $g$, the above integral is constant within a homology class of $M^{n}$. From this, we recover the boundary integral formula for the ADM mass
\begin{equation} \label{mass_formula}
    m = \frac{1}{(n-2)w_{n-1}}\int_{\partial M} \frac{\partial V}{\partial \nu} d \sigma.
\end{equation}
\eqref{mass_formula} is known in the literature as the ``Smarr formula", see \cite{PhysRevLett.30.71}, and we will use it extensively in sections 4 and 5.

\subsection{Inverse Mean Curvature Flow} In recent years, Inverse Mean Curvature Flow (IMCF) has yielded several Minkowski-type inequalities, see \cite{minkowski_euclidean}, \cite{ads_minkowski}, and \cite{schwarzschild_minkowski}. A one-parameter family of $C^{\infty}$ oriented immersions $X: \Sigma^{n-1} \times [0,T) \rightarrow M^{n}$ from a closed manifold $\Sigma^{n-1}$ into a Riemannian manifold $(M^{n},g)$ solves IMCF with initial data $\Sigma_{0} \subset M^{n}$ if

\begin{eqnarray} \label{IMCF}
    \frac{\partial}{\partial t} X (y,t) &=& H^{-1} \nu(y,t), \hspace{1.5cm} y \in \Sigma^{n-1}, \\
    \Sigma_{0} &=& X_{0}(\Sigma), \nonumber
\end{eqnarray}
where $\nu$ is the outward normal and $H>0$ the mean curvature of $\Sigma_{t}=X_{t}(\Sigma)$. A $C^{\infty}$, $H>0$ initial immersion $X_{0}$ always admits a short-time solution to \eqref{IMCF}, c.f. \cite{Gerhard1999}, but in general finite-time singularities eventually occur. Huisken and Ilmanen developed a notion of weak solutions to IMCF in \cite{imcf_penrose} to flow past these singularities.

Their weak solutions involve a level-set formulation of \eqref{IMCF}: for a smooth domain $\Omega \subset (M^{n},g)$, consider a solution $u \in C^{\infty}(M^{n})$, $du \neq 0$, to the degenerate elliptic Dirichlet problem

\begin{eqnarray} \label{level_set}
    \text{div}\left( \frac{\nabla u}{|\nabla u|} \right) &=& |\nabla u| \hspace{1.2cm} \text{  in  } (M^{n} \setminus \Omega,g), \\
    u|_{\Sigma_{0}} &=& 0 \hspace{2cm} \text{   on   } \partial \Omega. \nonumber
\end{eqnarray}
 Direct verification shows that the level-set flow of $u$ solves \eqref{IMCF} with initial condition $\Sigma_{0}= \partial \Omega$. Thus, \eqref{level_set} re-formulates of the initial value problem \eqref{IMCF}, with the flow surfaces $\Sigma_{t}$ corresponding to the level sets $\{ u = t\}$ of $u$.

The weak solutions developed in \cite{imcf_penrose} are variational solutions of \eqref{level_set}, and the $C^{1,\alpha}$ hypersurfaces defined by

\begin{equation}
    \Sigma_{t} = \partial \{ u < t\} \subset M^{n},
\end{equation}
are the corresponding ``weak flow surfaces". In the case of equality in \thref{minkowski}, the weak solution on the static manifold $(M^{n},g)$ is a smooth solution to \eqref{level_set}, which allows us to ignore many of the delicate aspects of the weak solution theory here. There are, however, some important asymptotic properties for general weak solutions which will play an important role in the analysis of the smooth solution. For convenience of the reader, we state these results below.  
\begin{theorem}[Lemma 7.1 in \cite{imcf_penrose}, Asymptotic Properties of Weak IMCF] \thlabel{imcf_asymptotics}
Let $(M^{n},g)$ be an asymptotically flat Riemannian manifold, let $U= \mathbb{R}^{n} \setminus B_{\delta}(0)$ be an asymptotically flat end of $(M^{n},g)$ with asymptotically flat coordinates $x^{i}$, and let $u \in C^{0,1}_{\text{loc}} (M^{n})$ be the unique variational solution to IMCF with initial condition $\Sigma_{0}$ and compact level sets. Then the following properties hold.

\begin{enumerate} [label=(\alph*)]
    \item The blown-down flow surfaces

    \begin{equation}
        \widetilde{\Sigma}_{t} =  \{ e^{-\frac{t}{n-1}}x | x \in \Sigma_{t} \}, \hspace{2cm} x=(x^{1}, \dots, x^{n+1}) \in U,
    \end{equation}

    converge in $C^{1,\alpha}$ to $\partial B_{r_{0}}(0)$ for $r_{0}$ defined before.
    \item There is a uniform constant $C$ so that the function $u$ satisfies $|\nabla u(x)| \leq C|x|^{-1}$ where differentiable and for $|x|$ large enough. In particular, $H_{\Sigma_{t}} \leq C e^{-\frac{t}{n-1}}$ when $u$ solves \eqref{level_set} classically.
\end{enumerate}

\begin{remark}
The decay conditions required by Huisken-Ilmanen for \thref{imcf_asymptotics} are actually more mild than stated above, and their definition of asymptotic flatness in \cite{imcf_penrose} requires stronger conditions than in \thref{AF}. However, these distinctions are not important to any of the proofs.
\end{remark}
\end{theorem}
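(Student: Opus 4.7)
The plan is to follow the general Huisken--Ilmanen strategy for weak IMCF in asymptotically flat ends: combine explicit barriers built from Euclidean IMCF with their a priori gradient estimate, and then pass to a limit via a parabolic rescaling. Logically it is cleanest to prove (b) first and then feed the resulting regularity into (a).

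For (b), I would build sub- and super-solutions out of the exact Euclidean solution $u_{\mathrm{Euc}}(y)=(n-1)\log|y|$. Under the decay $\eta_{kl}\in\mathcal{O}_2(|y|^{(2-n)/2})$ in \eqref{g_expansion}, substituting $u_{\mathrm{Euc}}$ into the left-hand side of \eqref{level_set} produces an error that decays strictly faster than the right-hand side $|\nabla u_{\mathrm{Euc}}|\sim (n-1)/|y|$, so adding suitable constants $C_\pm$ yields genuine weak sub/super-solutions $u_\pm(y)=(n-1)\log|y|+C_\pm$ on the far end. Variational comparison sandwiches $u$ between these and forces $|y|\sim r_0 e^{t/(n-1)}$ on each level set. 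With this in hand, Huisken--Ilmanen's general gradient estimate bounds $|\nabla u|(x)$ by the supremum of mean curvatures of a calibrating family near $x$; applied to coordinate spheres, whose mean curvature in $g$ is $(n-1)/|y|+o(|y|^{-1})$, this gives $|\nabla u|(y)\leq C|y|^{-1}$ directly. Since $|\nabla u|=H$ on smooth level sets, the lower barrier converts this into $H_{\Sigma_t}\leq Ce^{-t/(n-1)}$.

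For (a), I would perform a parabolic rescaling by setting $\tilde g_t = e^{-2t/(n-1)}g$, so that $\tilde g_t\to\delta$ in $C^2_{\mathrm{loc}}(\mathbb{R}^n\setminus\{0\})$ and the blown-down surfaces $\tilde\Sigma_t$ solve IMCF in $\tilde g_t$. Area is scale-invariant, and the exponential area growth $|\Sigma_t|=e^t|\Sigma_0|$ for the weak flow gives $|\tilde\Sigma_t|=|\Sigma_0|=w_{n-1}r_0^{n-1}$. The gradient bound from step (b), transported to the rescaled metric, yields uniform $C^{1,\alpha}$ control of $\tilde\Sigma_t$ on compact annuli in $\mathbb{R}^n\setminus\{0\}$. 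Arzela--Ascoli then produces subsequential limits, each of which is a $C^{1,\alpha}$ variational solution to Euclidean IMCF with compact level sets and area $w_{n-1}r_0^{n-1}$. The only closed connected hypersurface with these properties is $\partial B_{r_0}(0)$, and uniqueness of the limit upgrades subsequential to full $C^{1,\alpha}$ convergence.

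The main obstacle is the last identification step: pinning down the limit and its radius rather than merely a subsequential, possibly singular, measure-theoretic limit. This requires upper semicontinuity of the area functional under $C^{1,\alpha}$ convergence to exclude loss of area, together with the classification of round spheres as the only closed connected stationary Euclidean IMCF states of prescribed area. Both ingredients are available in the weak solution framework of \cite{imcf_penrose}, but matching them cleanly to the rescaled family $\tilde\Sigma_t$, while simultaneously handling the fact that the rescaled background metric $\tilde g_t$ is itself time-dependent, is the technical heart of the argument.
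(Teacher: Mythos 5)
This statement is not proved in the paper at all: it is quoted, for the reader's convenience, as Lemma 7.1 of Huisken--Ilmanen \cite{imcf_penrose}, so there is no internal proof to compare against; what you have written is an attempt to reconstruct the original blow-down argument. Your overall strategy (barriers from the Euclidean expanding-sphere solution, the interior gradient estimate, parabolic rescaling plus compactness) is indeed the Huisken--Ilmanen strategy, but two steps as written do not go through. First, the barrier construction is flawed: the level-set equation \eqref{level_set} is invariant under adding constants to $u$, so $u_{\pm}(y)=(n-1)\log|y|+C_{\pm}$ has exactly the same defect as $(n-1)\log|y|$ itself; smallness of the error relative to $|\nabla u|\sim (n-1)/|y|$ does not give the sign condition needed for a genuine sub- or supersolution. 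One must perturb the radial profile rather than shift it, e.g. compare with radial functions whose slope is tilted, or integrate the two-sided bound $H=\frac{n-1}{|y|}\bigl(1+\mathcal{O}(|y|^{-(n-2)/2})\bigr)$ coming from \eqref{g_expansion} to build exact radial sub/supersolutions; since the error is integrable in $r$, these still differ from $(n-1)\log|y|$ only by bounded amounts, which is what recovers the sharp sandwich $c_{1}e^{t/(n-1)}\leq |y|\leq c_{2}e^{t/(n-1)}$ on $\Sigma_{t}$.

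Second, the identification of the blow-down limit is asserted rather than proved: a closed connected $C^{1,\alpha}$ hypersurface of area $w_{n-1}r_{0}^{n-1}$ arising as a subsequential limit is not automatically $\partial B_{r_{0}}(0)$, and ``the only closed connected hypersurface with these properties is $\partial B_{r_{0}}(0)$'' is false without invoking more of the weak-solution structure. The actual argument in \cite{imcf_penrose} uses that the rescaled sublevel sets are minimizing hulls in metrics converging to the flat metric, so the limit sets are minimizing hulls in $\mathbb{R}^{n}$ trapped in fixed annuli at every scale, and it is this strict outward-minimizing property at all times, combined with the barrier sandwich and the exponential area growth, that forces roundness and pins the radius; uniqueness of the limit then upgrades subsequential to full convergence. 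You correctly flag this as the technical heart, but as it stands that heart is missing, so the proposal should be regarded as an outline of the right approach with a genuine gap at the barrier step and at the limit-identification step.
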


\subsection{Outer-Minimizing Sets}
An important notion in geometry and relativity which is intimately related to IMCF is that of an outer-minimizing set.  

\begin{definition} \thlabel{outer_min}
Let $(M^{n},g)$ be a Riemannian manifold with smooth, compact boundary $\partial M^{n} =\Sigma^{n-1}$. We say that $\Sigma$ is \textbf{outer-minimizing} if for any $C^{1}$ domain $\Omega \subset (M^{n},g)$ that is bounded by $\Sigma$ we have $|\Sigma| \leq |\partial \Omega \setminus \Sigma|$, and \textbf{strictly outer-minimizing} if this inequality is strict.
\end{definition}
By the second variation of area, an outer-minimizing boundary must have mean curvature $H \geq 0$. Conversely, an $H \geq 0$ hypersurface in $(M^{n},g)$ is outer-minimizing if it a level set of a suitable function on the ambient space.

\begin{proposition}[Foliations by Mean-Convex Level Sets] \thlabel{outer_min_level}
Let $(M^{n},g)$ be a Riemannian manifold. Consider $u \in C^{2}(M)$ with $d u \neq 0$ and compact level sets $\Sigma_{t} = \{ u = t \}$. If $H_{\Sigma_{t}} \geq 0$ on every $\Sigma_{t}= \{ u = t \} \subset M^{n}$, then each $\Sigma_{t} = \partial (M^{n} \cap \{ u > t \})$ is outer-minimizing in $(M^{n} \cap \{ u > t\},g)$ (resp. strictly outer-minimizing if $H_{\Sigma_{t}} >0$).
\end{proposition}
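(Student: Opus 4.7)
The plan is to use $\nu := \nabla u / |\nabla u|$ as a calibration for $\partial \Omega_t$ and exploit the divergence theorem. First I would observe that, since $du$ is nowhere zero, $\nu$ is a well-defined $C^1$ unit vector field on all of $M$, and at each point $p$ it coincides with the outward unit normal of the level set $\Sigma_{u(p)}$ through $p$. A short computation --- using that $|\nu|^2 \equiv 1$ forces $\langle \nabla_\nu \nu, \nu \rangle = 0$, so that $\operatorname{div} \nu$ reduces to the tangential divergence of $\nu$ along the level set, which equals $\operatorname{tr}(h) = H$ --- yields the pointwise identity $\operatorname{div} \nu = H_{\Sigma_{u(p)}}$. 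The hypothesis $H_{\Sigma_t} \geq 0$ therefore translates to $\operatorname{div} \nu \geq 0$ on all of $M$.

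Next, given any $C^1$ competitor $\widetilde{\Omega} \supseteq \Omega_t$, the annular region $A := \widetilde{\Omega} \setminus \overline{\Omega_t}$ is bounded with piecewise $C^1$ boundary, and the divergence theorem applied to $\nu$ on $A$ yields
\[
0 \;\leq\; \int_A \operatorname{div} \nu \, dV \;=\; \int_{\partial \widetilde{\Omega}} \langle \nu, \tilde{\nu} \rangle \, d\sigma \;-\; |\partial \Omega_t|,
\]
with $\tilde{\nu}$ the outward unit normal of $\widetilde{\Omega}$; the boundary contribution from $\partial \Omega_t$ collapses to $|\partial \Omega_t|$ because $\nu$ is precisely the relevant unit normal there. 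A pointwise Cauchy--Schwarz bounds the remaining boundary integral above by $|\partial \widetilde{\Omega}|$, and chaining these two inequalities delivers $|\partial \Omega_t| \leq |\partial \widetilde{\Omega}|$, which is the outer-minimizing property.

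For the strict version, assuming $H_{\Sigma_t} > 0$ everywhere, any competitor $\widetilde{\Omega} \neq \Omega_t$ containing $\Omega_t$ has $A$ nonempty and open, hence of positive volume, so $\int_A \operatorname{div} \nu \, dV > 0$ and the same chain of inequalities becomes strict. The main (modest) obstacle here is simply being careful with the pointwise identity $\operatorname{div} \nu = H$ --- which must hold at every point of $M$ rather than merely on a single distinguished level set --- and with the application of the divergence theorem on the annular region when $\widetilde{\Omega}$ is merely $C^1$; no further structural input beyond this calibration identity is required.
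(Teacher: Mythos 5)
Your proposal is correct and is essentially the paper's own argument: both use the pointwise identity $\operatorname{div}\bigl(\nabla u/|\nabla u|\bigr)=H_{\Sigma_{u(p)}}\ge 0$ and the divergence theorem on $\widetilde{\Omega}\setminus\Omega_{t}$, with Cauchy--Schwarz on the outer boundary term and strictness coming from the positive volume of the annulus when $H>0$. The only cosmetic difference is that the paper writes $\int_{\widetilde{\Omega}\setminus\Omega_{t}}=\int_{\widetilde{\Omega}}-\int_{\Omega_{t}}$ and applies the divergence theorem to the two $C^{1}$ domains separately, which sidesteps the boundary-regularity worry you flag for the annular region.
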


\begin{proof}
Given that $M^{n}$ is foliated by the mean-convex hypersurfaces $\Sigma_{t}$, we have

\begin{equation} \label{level_set_H}
    \text{div}\left( \frac{\nabla u}{|\nabla u|} \right) \geq 0 \hspace{0.5cm} \text{in} \hspace{0.3cm} (M^{n},g)
\end{equation}
from the mean curvature formula for a level set. For each $t \in \text{Im} (u)$, let $\Omega \subset M^{n} \cap \{ u > t\}$ by a $C^{1}$ domain bounded by $\Sigma_{t}$. From the divergence theorem

\begin{eqnarray*}
    0 &\leq & \int_{\Omega} \text{div} \left(\frac{\nabla u}{|\nabla u|} \right) d \sigma =
    \int_{\partial \Omega \setminus \Sigma_{t}} \langle \frac{\nabla u}{|\nabla u|}, \nu \rangle d \sigma - \int_{\Sigma_{t}} \langle \frac{\nabla u}{|\nabla u|}, \nu \rangle d \sigma, \\
    &\leq& |\partial \Omega \setminus \Sigma_{t}| - |\Sigma_{t}|. 
\end{eqnarray*}
where we have used Cauchy-Schwarz on $\partial \Omega \setminus \Sigma_{t}$ and $\nu= \frac{\nabla u}{|\nabla u|}$ on $\Sigma_{t}$. If $H_{\Sigma_{t}} > 0$ for each $t$, then inequality \eqref{level_set_H} is strict.
\end{proof}
Foliations by IMCF are strictly outer-minimizing since they solve \eqref{level_set}. We presented \thref{outer_min_level} in a more general form because we are also interested in the level sets of static potentials on asymptotically flat manifolds. Due to the asymptotic behavior of $V$, c.f. \cite{level_set_static}, the sets $\{ V= V_{0} \}$ are each smooth, $H>0$ hypersurfaces outside of a compact set.

\begin{corollary}
Let $(M^{n},g, V)$ be an asymptotically flat static system with $m >0 $. Choose $\widetilde{V_{0}} < 1$ such that

\begin{enumerate}
    \item $d V \neq 0$ on $ \{ V > \widetilde{V_{0}} \} \subset M^{n}$, AND
    \item $H_{\{ V=V_{0} \}} > 0$ when $V_{0} \in (\widetilde{V_{0}},1)$.  
\end{enumerate}
Then for $V_{0} \in (\widetilde{V}_{0},1)$ the level sets $\Sigma^{n-1} = \{ V= V_{0} \} \cong \mathbb{S}^{n-1}$ are smooth and strictly outer-minimizing in $\{ V > V_{0} \} \subset (M^{n},g)$. In particular, \thref{level_set_rig} applies. 
\end{corollary}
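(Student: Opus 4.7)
The plan is to verify three properties of the level set $\Sigma := \{V = V_{0}\}$ for $V_{0} \in (\widetilde{V}_{0},1)$: smoothness, spherical topology, and strict outer-minimization. Together with $V|_{\Sigma} = V_{0} > 0$ and $m > 0$, these place the exterior system $(\overline{\{V \geq V_{0}\}}, g, V)$ within the scope of \thref{level_set_rig}.

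Smoothness is immediate from hypothesis (1): $dV \neq 0$ on a neighborhood of $\Sigma$, so the implicit function theorem supplies the embedded hypersurface structure. For the topology, I would use that, by the expansion \eqref{V_expansion}, the level sets $\{V = V_{1}\}$ for $V_{1}$ sufficiently close to $1$ are $C^{2}$-graphs over large coordinate spheres $\{|y| = R\}$ in the asymptotic chart, hence topological $(n-1)$-spheres. Since $dV$ is nonvanishing throughout the entire region $\{\widetilde{V}_{0} < V < 1\}$, the reparametrized gradient flow of $\nabla V/|\nabla V|^{2}$ (along which $V$ increases at unit rate) transports $\Sigma$ diffeomorphically onto $\{V = V_{1}\}$, yielding $\Sigma \cong \mathbb{S}^{n-1}$.

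The main step is outer-minimization, which I would obtain by applying \thref{outer_min_level} with $u = V$. Because $V \to 1$ at infinity, the sublevel sets $\Omega_{t} = \{V < t\}$ are bounded for each $t \in (\widetilde{V}_{0}, 1)$; hypothesis (2) gives $H > 0$ on each level set $\{V = t\}$ in this range; and the outward unit normal of $\Omega_{V_{0}}$ coincides with $\nabla V/|\nabla V|$, pointing toward spatial infinity, which is the correct orientation for the mean curvature computation in \thref{outer_min_level}. The proposition then produces strict outer-minimization of $\Omega_{V_{0}}$, equivalently of $\Sigma$.

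The one delicate point is that \thref{outer_min_level} is stated assuming $du \neq 0$ on all of $M$, whereas here $dV$ may vanish on $\{V \leq \widetilde{V}_{0}\}$. However, the divergence-theorem argument in its proof is applied only on the annular region $\widetilde{\Omega} \setminus \Omega_{V_{0}}$ between a competitor and our domain, and this annulus lies in $\{V \geq V_{0}\} \subset \{V > \widetilde{V}_{0}\}$, where $dV$ does not vanish. So the argument carries through verbatim, and once all three properties are in hand, \thref{level_set_rig} applies directly to $(\overline{\{V \geq V_{0}\}}, g, V)$ with boundary $\Sigma$.
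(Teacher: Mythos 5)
Your proposal is correct and follows essentially the route the paper intends: smoothness and sphericity of $\{V=V_{0}\}$ from $dV \neq 0$ together with the asymptotics of $V$, and strict outer-minimization by running the divergence-theorem argument of \thref{outer_min_level} with $u=V$. Your localization of that argument to the annulus $\widetilde{\Omega} \setminus \{V < V_{0}\} \subset \{V > \widetilde{V}_{0}\}$, where $dV$ is nonvanishing and the level sets are mean-convex, is precisely the adjustment the paper's application of \thref{outer_min_level} implicitly relies on.
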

This corollary is not essential to our approach, but it may be useful elsewhere, e.g. for understanding the asymptotic behavior of the level sets of $V$.

\section{Umbilical Foliations by Inverse Mean Curvature Flow}    

The goal of this section will be to prove \thref{cmc_rigidity}. In \cite{schwarzschild_minkowski}, Wei characterizes equality when $(M^{n},g)$ is a region in the Schwarzschild manifold. However, his method of proof cannot be applied to an arbitrary static background, and so our approach here is quite different. \thref{minkowski} states that whenever equality is achieved in \eqref{inequality}, $\partial M= \Sigma$ (i.e. $\partial M$ is connected), and $M$ is foliated by a smooth, totally umbilical solution $\Sigma_{t}$ to IMCF.

The first important observation is that when the $\Sigma_{t}$ are umbilical, the induced metric $\sigma_{ij}(y,t)$ of $\Sigma_{t}$ evolves self-similarly under \eqref{IMCF}. This de-couples the evolution of the mean curvature $H(y,t)$ from all other evolving geometric quantities, implying that $g$ is uniquely determined by $H(y,0)$ and $\sigma_{ij}(y,0)$. We state the following theorem for the more general situation of a background space with constant scalar curvature, since it may be useful in understanding other geometric inequalities arising from IMCF.

\begin{theorem}[Umbilical Foliations by IMCF] \thlabel{imcf_uniqueness}
Let $(M^{n},g)$ be a Riemannian manifold with constant scalar curvature $R_{g}= \overline{R_{0}}$ and $C^{\infty}$, $H>0$ boundary $\partial M$. Let $\partial M= X_{0}(\Sigma)$ for a smooth manifold $\Sigma$ and embedding $X_{0}$, and let $X: \Sigma \times (0,T) \rightarrow (M^{n},g)$ be the solution to \eqref{IMCF} with initial data $X_{0}$. If $\Sigma_{t}=X_{t}(\Sigma)$ foliate $M^{n}$ and are umbilic, then the pull-back of $g$ under $X$ is

\begin{equation*}
    g_{ij}(y,t) = u(y,t)^{2} dt^{2} + e^{\frac{2t}{n-1}}\sigma_{ij}(y,0),
\end{equation*}
where $\sigma_{ij}(y,0)$ is the induced metric on $\partial M$ pulled back via $X_{0}$, and $u(y,t)> 0$ solves the initial value problem

\begin{eqnarray} \label{initial_value}
    \partial_{t} u (y,t) &=& e^{-2\frac{t}{n-1}} u^{2} \Delta_{\sigma_{0}} u + \frac{n}{2(n-1)}u \\
    & &+ \left(\frac{\overline{R_{0}}}{2} - \frac{R_{\sigma_{0}}(y)}{2}e^{-\frac{2t}{n-1}} \right) u^{3}, \nonumber \\
    u(y,0) &=& H^{-1}(y) \nonumber
\end{eqnarray}
on $\Sigma \times [0,T)$. Here $\Delta_{\sigma_{0}}$ and $R_{\sigma_{0}}$ are the Laplace-Beltrami operator and intrinsic scalar curvature function of the metric $\sigma_{ij}(y,0)$, and $H(y)$ is the mean curvature of $\partial M$ pulled back under $X_{0}$.

In particular, $g$ is uniquely determined by the induced metric and mean curvature of $\partial M$.
\end{theorem}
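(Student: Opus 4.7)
The plan is to work in the coordinate system naturally provided by the flow. Parametrize $M$ by $(x,t) \in \Sigma \times [0,T)$ via the map $X$. Because $X$ solves \eqref{IMCF}, the pull-back metric splits orthogonally: the flow vector is $\partial_t X = H^{-1} \nu$, so in these coordinates
\begin{equation*}
   X^{*} g \;=\; \frac{1}{H(x,t)^{2}}\, dt^{2} + \sigma_{ij}(x,t)\, dx^{i} dx^{j},
\end{equation*}
where $\sigma(x,t)$ is the induced metric on $\Sigma_t$. This motivates setting $u(x,t) = 1/H(x,t)$, so that the lapse in the $t$-direction is $u$.

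Next I would exploit umbilicity to pin down $\sigma(x,t)$. The standard IMCF evolution of the induced metric is $\partial_t \sigma_{ij} = 2 h_{ij}/H$, and umbilicity $h_{ij} = \frac{H}{n-1}\sigma_{ij}$ reduces this to $\partial_t \sigma_{ij} = \frac{2}{n-1}\sigma_{ij}$. Integrating in $t$ gives the self-similar formula $\sigma_{ij}(x,t) = e^{2t/(n-1)} \sigma_{ij}(x,0)$ stated in the theorem, and in particular $R_{\sigma(\cdot,t)} = e^{-2t/(n-1)} R_{\sigma_{0}}$ and $\Delta_{\sigma(\cdot,t)} = e^{-2t/(n-1)} \Delta_{\sigma_{0}}$.

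It remains to derive the PDE for $u$. Starting from the classical evolution identity $\partial_t H = -\Delta_{\Sigma_t} u - (|h|^{2} + \mathrm{Ric}(\nu,\nu)) u$ for IMCF and converting to $u$ via $\partial_t u = -u^{2}\partial_t H$, I get
\begin{equation*}
  \partial_t u \;=\; u^{2}\Delta_{\Sigma_t} u + u^{3}\bigl( |h|^{2} + \mathrm{Ric}(\nu,\nu) \bigr).
\end{equation*}
To remove the ambient Ricci term I would invoke the Gauss equation $R_{\sigma} = R_{g} - 2\,\mathrm{Ric}(\nu,\nu) + H^{2} - |h|^{2}$, which for an umbilical leaf with $|h|^{2} = H^{2}/(n-1)$ and $R_{g} = \overline{R_{0}}$ yields
\begin{equation*}
   |h|^{2} + \mathrm{Ric}(\nu,\nu) \;=\; \frac{n\, H^{2}}{2(n-1)} + \frac{\overline{R_{0}} - R_{\sigma(\cdot,t)}}{2}.
\end{equation*}
Substituting this expression, recalling $H = 1/u$, and then replacing $R_{\sigma(\cdot,t)}$ and $\Delta_{\Sigma_t}$ by their self-similar counterparts from the previous step produces exactly the stated evolution equation \eqref{initial_value}, with initial datum $u(x,0) = 1/H(x)$ coming from the boundary's mean curvature.

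The final step is purely a matter of well-posedness: \eqref{initial_value} is a quasi-linear parabolic equation on the fixed manifold $(\Sigma, \sigma_{0})$ whose coefficients depend only on $\sigma_{0}$ and on the constant $\overline{R_{0}}$, and whose Cauchy data are determined by $H$. Classical parabolic theory therefore gives a unique $u(x,t)$, and then the explicit product form for $X^{*}g$ recovers $g$ uniquely from the pair $(\sigma_{0}, H)$. The main technical point to watch is consistency of the Gauss-equation manipulation (which is what makes the $\mathrm{Ric}(\nu,\nu)$ reduction possible only under umbilicity and constancy of $R_{g}$); the remaining difficulty is bookkeeping the conformal factor $e^{2t/(n-1)}$ correctly in the Laplacian and scalar curvature so that the PDE closes on $(\Sigma, \sigma_{0})$ rather than coupling back to the $\Sigma_t$ geometry.
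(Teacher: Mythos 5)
Your proposal is correct and follows essentially the same route as the paper: self-similarity of $\sigma_{ij}(x,t)$ from umbilicity, the evolution equation for the speed $H^{-1}$ reduced via the twice-contracted Gauss equation with $R_{g}=\overline{R_{0}}$ constant, and uniqueness from quasi-linear parabolic theory for \eqref{initial_value}. The only cosmetic difference is that you derive the $u$-equation by converting the evolution of $H$ and you spell out the orthogonal splitting $X^{*}g = u^{2}dt^{2} + \sigma(x,t)$, which the paper cites from \cite{imcf_penrose} and leaves implicit, respectively.
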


\begin{proof}
We first determine the induced metric $\sigma_{ij}(y,t)$ of $\Sigma_{t}$. Since $h_{ij}(y,t)= \frac{H}{n-1} \sigma_{ij}(y,t)$, the first variation formula for the metric, c.f. Theorem 3.2(i) in \cite{Gerhard1999}, becomes 
\begin{equation} \label{self-similar}
    \frac{\partial}{\partial t} \sigma_{ij}(y,t) = \frac{2}{H} h_{ij}(y,t) = \frac{2}{n-1} \sigma_{ij}(y,t).
\end{equation}
As a result, $\sigma_{ij}(y,t)=e^{\frac{2t}{n-1}} \sigma_{ij}(y,0)$. The evolution equation for the flow speed $H^{-1}$ in a general Riemannian manifold is given, e.g. in equation (1.3) of \cite{imcf_penrose}, as

\begin{equation} \label{flow_speed}
    \partial_{t} H^{-1} = \frac{1}{H^{2}} \Delta H^{-1} + \left(\frac{|h|^{2}}{H^{2}} + \frac{\text{Ric}(\nu,\nu)}{H^{2}} \right) H^{-1}.
\end{equation}
Since $\Sigma_{t}$ is umbilical with induced metric evolving by scaling,

\begin{eqnarray*}
    |h|^{2}               &=& \frac{1}{n-1}H^{2}, \\
    \Delta_{\sigma_{t}} f &=& e^{-\frac{2t}{n-1}} \Delta_{\sigma_{0}} f, \\
    \text{Ric}(\nu,\nu) &=& \frac{1}{2} \left( \frac{n-2}{n-1}H^{2} - R_{\sigma_{t}}(y,t) + R_{g} \right) \\
                        &=& \frac{1}{2} \left( \frac{n-2}{n-1}H^{2} - e^{-\frac{2t}{n-1}} R_{\sigma_{0}}(y) + \overline{R}_{0} \right)
\end{eqnarray*}
where the last two lines come from the twice-contracted Gauss equation. This reduces \eqref{flow_speed} to

\begin{eqnarray*}
\partial_{t} H^{-1} (y,t) &=& e^{-2\frac{t}{n-1}} H^{-2} \Delta_{\sigma_{0}} H^{-1} + \frac{n}{2(n-1)}H^{-1} \\
    & &+ \left(\frac{\overline{R_{0}}}{2} - \frac{R_{\sigma_{0}}(y)}{2}e^{-\frac{2t}{n-1}} \right) H^{-3}, \nonumber
\end{eqnarray*}
and so altogether, $u=H^{-1}$ solves the initial value problem \eqref{initial_value}.
The right-hand side of the evolution equation \eqref{initial_value} equals $L(u)$ for a second-order differential operator $L$ which in coordinates has the form

\begin{equation*}
 L(u) = Q^{ij}(y, t, u) \partial^{2}_{ij} u + b(y,t,u).    
\end{equation*}
Here, $Q^{ij}$ is a symmetric matrix with uniformly positive eigenvalues over compact time intervals. Therefore, the problem \eqref{initial_value} is of quasi-linear parabolic type, and so its solution is uniquely determined by the initial data $u(y,0)$-- see Theorem A.3.1 in \cite{mcf_lecture_notes}, see also Theorem 1.1 in \cite{parabolic_uniqueness}. 
\end{proof}

The second important observation is that the asymptotics of IMCF in an asymptotically flat manifold completely determine the topology and intrinsic geometry of these $\Sigma_{t}$. It is clear that the $\Sigma_{t}$ which foliate $(M^{n},g)$ are neccessarily diffeomorphic to one another, as they are the level sets of a smooth, non-degenerate function $u: M^{n} \rightarrow \mathbb{R}$, and so $\Sigma_{t} \cong \mathbb{S}^{n-1}$ for each $t \in [0,\infty)$ according to \thref{imcf_asymptotics}. Those asymptotics also allow us to fully determine $\sigma_{ij}(y,t)$ for each $t \in [0,\infty)$. Specifically, because $\Sigma_{t}$ must become $C^{1,\alpha}$-close to a coordinate sphere as $t \rightarrow \infty$, each $\Sigma_{t}$ is intrinsically round whenever the equality is achieved.

\begin{theorem} \thlabel{round}
Let $(M^{n},g)$ be an asymptotically flat Riemannian manifold with boundary $\partial M$. Suppose $(M^{n},g)$ is foliated by a smooth solution $\{\Sigma_{t}\}_{0 \leq t < +\infty}$ of IMCF with initial data $\Sigma_{0}=\partial M$. If each $\Sigma_{t}$ is umbilical, then the induced metric $\sigma_{ij}(y,t)$ on $\Sigma_{t}$ is

\begin{equation*}
   \sigma_{ij} (y,t)= r_{0}^{2}e^{\frac{2t}{n-1}} \sigma_{ij}^{\mathbb{S}^{n-1}},
\end{equation*}
 where $r_{0}= (\frac{|\Sigma_{0}|}{w_{n-1}})^{\frac{1}{n-1}}$ and $\sigma^{\mathbb{S}^{n-1}}_{ij}$ is the standard sphere metric.
\end{theorem}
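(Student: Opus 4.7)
Given Theorem~\thref{imcf_uniqueness}, we already have $\sigma_{ij}(x,t) = e^{2t/(n-1)}\sigma_{ij}(x,0)$, so the task reduces to identifying $(\Sigma, \sigma(\cdot, 0))$ isometrically with the round sphere of radius $r_{0}$. The plan is to combine this self-similar scaling with the blow-down convergence furnished by Theorem~\thref{imcf_asymptotics}(a).

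In the asymptotically flat chart on the end of $(M^{n},g)$, introduce the rescaled parametrizations $\widetilde{X}(\cdot, t) := e^{-t/(n-1)} X(\cdot, t): \Sigma \to \widetilde{\Sigma}_{t} \subset \mathbb{R}^{n}$. A direct computation gives $\widetilde{X}(\cdot,t)^{*} \delta = e^{-2t/(n-1)} X(\cdot,t)^{*} \delta$, while $X(\cdot,t)^{*} g = \sigma(\cdot, t) = e^{2t/(n-1)}\sigma(\cdot, 0)$. Since $g - \delta = \mathcal{O}_{2}(|y|^{(2-n)/2})$ and the points of $\Sigma_{t}$ satisfy $|y| \sim r_{0} e^{t/(n-1)}$ for $t$ large, a routine comparison of pull-backs (using that $g$ and $\delta$ are uniformly comparable in the asymptotic region, so $|dX(v)|_{\delta}$ and $|dX(v)|_{g}$ are comparable) yields, in the operator norm of $\sigma(\cdot,0)$, the bound
\begin{equation*}
  \widetilde{X}(\cdot, t)^{*} \delta - \sigma(\cdot, 0) = O\bigl(e^{t(2-n)/(2(n-1))}\bigr) \longrightarrow 0
\end{equation*}
uniformly on $\Sigma$ as $t \to \infty$.

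Meanwhile, Theorem~\thref{imcf_asymptotics}(a) furnishes $\widetilde{\Sigma}_{t} \to \partial B_{r_{0}}(0)$ in $C^{1,\alpha}$, so once $t$ is large the nearest-point projections $\pi_{t}: \widetilde{\Sigma}_{t} \to \partial B_{r_{0}}(0)$ are well-defined diffeomorphisms converging to the identity, and the pullback $\pi_{t}^{*}(r_{0}^{2}\sigma^{\mathbb{S}^{n-1}})$ converges to the $\delta$-induced metric on $\widetilde{\Sigma}_{t}$. The composite diffeomorphisms $\Phi_{t} := \pi_{t} \circ \widetilde{X}(\cdot, t): \Sigma \to \partial B_{r_{0}}(0)$ then satisfy $\Phi_{t}^{*}(r_{0}^{2}\sigma^{\mathbb{S}^{n-1}}) \to \sigma(\cdot,0)$ by combining the two displays above, and in particular inherit uniform bi-Lipschitz bounds. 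An Arzel\`a--Ascoli argument extracts a subsequence $\Phi_{t} \to \Phi_{\infty}$ in $C^{0}$, and passing to the limit gives $\Phi_{\infty}^{*}(r_{0}^{2} \sigma^{\mathbb{S}^{n-1}}) = \sigma(\cdot, 0)$. Transferring the IMCF parametrization through $\Phi_{\infty}$ yields the asserted form $\sigma_{ij}(x, t) = r_{0}^{2} e^{2t/(n-1)} \sigma_{ij}^{\mathbb{S}^{n-1}}(x)$. The main technical obstacle is ensuring that $\Phi_{\infty}$ is a genuine $C^{1}$ diffeomorphism so this pull-back identity is meaningful; this should follow by upgrading the $C^{0}$ convergence in the second paragraph to $C^{1,\alpha}$ using the $\mathcal{O}_{2}$ regularity of $g - \delta$ together with the standard smoothness estimates for IMCF, after which Arzel\`a--Ascoli produces a $C^{1}$-convergent subsequence with smooth limit.
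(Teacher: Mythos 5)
Your strategy is essentially the paper's: blow down by $e^{-t/(n-1)}$, use umbilicity (via the self-similar evolution $\sigma(\cdot,t)=e^{2t/(n-1)}\sigma(\cdot,0)$) to keep the abstract metric fixed, invoke \thref{imcf_asymptotics}(a) to identify the limit with $\partial B_{r_{0}}(0)$, and transfer roundness back through a limit map. The decisive step, however, is exactly the one you defer. With only uniform Lipschitz bounds, Arzel\`a--Ascoli gives $C^{0}$ convergence of $\Phi_{t}$, and you correctly note that this does not let you pass to the limit in $\Phi_{t}^{*}(r_{0}^{2}\sigma^{\mathbb{S}^{n-1}})$; but the proposed fix --- ``standard smoothness estimates for IMCF'' plus the $\mathcal{O}_{2}$ decay of $g-\delta$ --- does not supply the missing uniform second-derivative control of $\widetilde{X}_{t}$ \emph{relative to the fixed metric} $\sigma_{0}$. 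What is actually needed is a uniform bound on the rescaled second fundamental form $|\widetilde{h}|$, and this holds only because the leaves are umbilical: $|\widetilde{h}|=\widetilde{H}/\sqrt{n-1}=e^{t/(n-1)}H/\sqrt{n-1}\leq C$ by the gradient estimate in \thref{imcf_asymptotics}(b). Combined with the decay of the Christoffel symbols of the blown-down metric, this bounds $|\nabla^{2}_{\sigma_{0}}\widetilde{X}^{i}_{t}|_{\sigma_{0}}$ and yields the $C^{1,\alpha}$ subconvergence; this estimate is the technical heart of the paper's proof, and your proposal neither proves it nor observes that umbilicity (not general IMCF regularity theory) is what makes it true.

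There is a second, smaller gap at the end: even granting a $C^{1,\alpha}$-convergent subsequence, the limit map is a priori only $C^{1}$, not smooth (your phrase ``with smooth limit'' is unjustified), so the identity $\Phi_{\infty}^{*}(r_{0}^{2}\sigma^{\mathbb{S}^{n-1}})=\sigma_{0}$ does not by itself exhibit $\sigma_{0}$ as the round metric through a smooth identification. The paper flags precisely this and resolves it by conjugating the ambient isometries of $\partial B_{r_{0}}(0)$ through the limit map, showing $\mathrm{Isom}(\sigma_{0})$ acts transitively with transitive isotropy, hence $\sigma_{0}$ has constant curvature. Alternatively, you could salvage your own $C^{0}$ argument without any $C^{1}$ upgrade: uniform $C^{0}$ convergence $\Phi_{t}^{*}(r_{0}^{2}\sigma^{\mathbb{S}^{n-1}})\rightarrow\sigma_{0}$ implies convergence of the induced distance functions, so the uniform limit $\Phi_{\infty}$ is distance-preserving, and the Myers--Steenrod theorem then makes it a smooth Riemannian isometry. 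As written, though, the proposal leaves both decisive steps unproven.
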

\begin{remark}
Coordinate spheres are the only closed, embedded hypersurfaces in the Schwarzschild manifold which are both intrinsically round ($\sigma_{ij}= r_{0}^{2} \sigma_{ij}^{\mathbb{S}^{n-1}}$) and extrinsically round ($h_{ij}= \frac{H}{n-1} \sigma_{ij}$). Therefore, \thref{round} gives a direct proof of rigidity in Wei's Theorem 1.1 from \cite{schwarzschild_minkowski}. 
\end{remark}

\begin{proof}
Let $U= \mathbb{R}^{n} \setminus B_{\delta}(0)$ be an asymptotically flat end of $(M^{n},g)$ with coordinates $x = (x^{1}, \dots, x^{n})$, and for each $t >0$ define blow-down objects on $U$ by

\begin{equation*}
   \widetilde{X}_{t} (y)= e^{-\frac{t}{n-1}} X_{t} (y), \hspace{2cm}  g^{(t)}_{kl} (x) = e^{-\frac{2t}{n-1}} g_{kl}(e^{\frac{t}{n-1}} x).
\end{equation*}
for $y \in \mathbb{S}^{n-1}$ and immersions $X_{t}: \mathbb{S}^{n-1} \rightarrow U$ solving IMCF in the  gauge \eqref{IMCF}. Note that $\widetilde{X}_{t}$ is an embedding if $X_{0}$ is, since here \eqref{IMCF} is the level-set flow of the function $u$ of \eqref{level_set}. Note also by asymptotic flatness with decay order $q > \frac{n-2}{2} \geq \frac{1}{2}$ that

\begin{equation} \label{af_decay}
    \sup_{x \in U}  (|g^{(t)}_{kl} (x) - \delta_{kl}| + e^{\frac{t}{n-1}} |\partial_{m} g^{(t)}_{kl}(x)| +  e^{\frac{2t}{n-1}} |\partial_{mr} g^{(t)}_{kl}|) \leq C e^{-\frac{t}{2(n-1)}} \hspace{0.6cm} m, r=1, \dots, n.  
\end{equation}
Since $h_{ij}(y,t)= \frac{H}{n-1} \sigma_{ij}(y,t)$, we know $\sigma_{ij}(y,t) = e^{\frac{2t}{n-1}} \sigma_{ij}(y,0)$ in view of the variation formula \eqref{self-similar}. Then the pull-back of $g_{kl}^{(t)}$ under the embedding $\widetilde{X}_{t}$ is

\begin{equation} \label{fixed_metric}
   \widetilde{\sigma}_{ij} (y,t) = \sigma_{ij}(y,0).
\end{equation}
On the other hand, the extrinsic curvature $\widetilde{h}_{ij}(y,t)$ of $\widetilde{\Sigma}_{t}$ satisfies

\begin{eqnarray} \label{total_curvature}
    |\widetilde{h}(y,t)|_{\widetilde{\sigma}(y,t)} &=& |\frac{\widetilde{H}}{n-1} \widetilde{\sigma}(y,t)|_{\widetilde{\sigma}(y,t)} \\
    &=& \frac{1}{\sqrt{n-1}} \widetilde{H}(y,t) = \frac{1}{\sqrt{n-1}} e^{\frac{t}{n-1}} H(y,t) \leq C. \nonumber
\end{eqnarray}
for some uniform constant $C$. Here, we have used the estimate $H=|\nabla u| \leq C e^{-\frac{t}{n-1}}$ on mean curvature from property (b) of \thref{imcf_asymptotics}. 

Denoting the abstract initial metric by $\sigma_{0}$, $\widetilde{X}_{t}: (\mathbb{S}^{n-1}, \sigma_{0}) \rightarrow (U, g^{(t)})$ are each isometric embeddings with uniformly bounded total curvature $|\widetilde{h}|_{\sigma_{0}}$. In particular, the Hessians for the coordinate functions $\widetilde{X}^{i}_{t} \in C^{\infty} (\mathbb{S}^{n-1})$, $i= 1, \dots, n$ satisfy
\begin{eqnarray*}
    |\nabla^{2}_{\sigma_{0}} \widetilde{X}^{i}_{t}|_{\sigma_{0}} &=& 
    |\nabla^{2}_{g^{(t)}} x^{i} - \nu(x^{i}) \widetilde{h}|_{\sigma_{0}} \\
    &\leq& |\Gamma^{i}_{kl}(t)| + |\nu(x^{i})| |\widetilde{h}|_{\sigma_{0}} \leq C,
\end{eqnarray*}
where $\Gamma^{i}_{kl}(t)$ are the Christoffel symbols of $g^{(t)}_{kl}$ in the coordinates $(x^{1}, \dots, x^{n})$. The upper bounds on $|\Gamma^{i}_{kl}(t)|$, $|\widetilde{h}|_{\sigma_{0}}$, and $|\nu(x^{i})|$ follow from \eqref{af_decay}, \eqref{total_curvature}, and property (a) of \thref{imcf_asymptotics}, respectively. Thus a local coordinate system $(y^{1},\dots,y^{n-1})$ of $\mathbb{S}^{n-1}$, we have for a fixed constant $C$ that

\begin{equation*}
    |\frac{\partial}{\partial y_{j}} \widetilde{X}^{i}_{t}(w) - \frac{\partial}{\partial y_{j}} \widetilde{X}^{i}_{t}(z)| \leq C d_{\sigma_{0}} (w,z) \hspace{1cm} w,z \in \mathbb{S}^{n-1}, \hspace{0.5cm} j=1, \dots, n-1 \hspace{0.5cm} i=1,\dots,n.
\end{equation*}
By the compact containment $C^{1,1}(\mathbb{S}^{n-1},\sigma_{0}) \subset \subset C^{1,\alpha}(\mathbb{S}^{n-1},\sigma_{0})$, see \cite{sobolev}, we may choose a subsequence $\widetilde{X}^{i}_{t_{k}}$ of $\widetilde{X}^{i}_{t}$ so that for every $i=1,\dots,n$
\begin{equation*}
    \widetilde{X}^{i}_{t_{k}} \rightarrow X^{i}_{\infty} \hspace{1cm} \text{  in  } C^{1,\alpha}(\mathbb{S}^{n-1},\sigma_{0}) \hspace{0.5cm} \alpha \in (0,1).
\end{equation*}
We now consider the map into $U$ with coordinate functions $X^{i}_{\infty}$. Since

\begin{equation*}
    \sigma_{ij}(y,0) = g^{(t)}_{kl} \partial_{i} \widetilde{X}_{t}^{k} \partial_{j} \widetilde{X}_{t}^{l} =  \delta_{kl} \partial_{i} \widetilde{X}_{t}^{k} \partial_{j} \widetilde{X}_{t}^{l} + \eta_{ij} \hspace{1cm} |\eta_{ij}|= |(g^{(t)}_{kl} - \delta_{kl}) \partial_{i} \widetilde{X}_{t}^{k} \partial_{j} \tilde{X}_{t}^{l}| \leq C e^{-\frac{t}{2(n-1)}}
\end{equation*}
by \eqref{af_decay}, the pull-back $\sigma_{ij}(y,\infty)= \delta_{kl} \partial_{i} X_{\infty}^{k} \partial_{j} X_{\infty}^{l}$ of $\delta$ under the map $X_{\infty}$ equals $\sigma_{ij}(y,0)$. That is

\begin{equation*}
    X_{\infty}: (\mathbb{S}^{n-1},\sigma_{0}) \rightarrow (U,\delta)
\end{equation*}
is a $C^{1}$ isometric embedding. Now, property (a) in \thref{imcf_asymptotics} means that $\lim_{k \rightarrow \infty} |\widetilde{X}_{t_{k}}| = (\frac{|\Sigma_{0}|}{w_{n-1}})^{\frac{1}{n-1}}  = r_{0}$, and so

\begin{equation*}
    X_{\infty}(\mathbb{S}^{n-1}) = \partial B_{r_{0}}(0) \subset U.
\end{equation*}
Any isometry $R$ of $\partial B_{r_{0}} \subset (U, \delta)$ defines a $C^{1}$ isometry $\widetilde{R}: (\mathbb{S}^{n-1}, \sigma_{0}) \rightarrow (\mathbb{S}^{n-1}, \sigma_{0})$ via the conjugation

\begin{equation*}
    \widetilde{R}= X^{-1}_{\infty} \circ R \circ X_{\infty}.
\end{equation*}
By conjugation with the group of rotations of $\partial B_{r}(0)$, we see that $\text{Isom}(\sigma_{0})$ acts transitively on $\mathbb{S}^{n-1}$, and that for each $y \in \mathbb{S}^{n-1}$ the stabilizer $\text{Stab}_{y} \subset \text{Isom}(\sigma_{0})$ acts transitively on the set $\{ v \in T_{y} \mathbb{S}^{n-1} | \sigma_{0}(v,v) \} =1$. Taken together, these imply that the sectional curvatures of $\sigma_{0}$ are identically equal and constant.
\end{proof}

When $\partial M$ has constant mean curvature and constant intrinsic scalar curvature, the initial value problem \eqref{initial_value} may be solved by considering the corresponding ODE for $u$. This implies that the $\Sigma_{t}$ in \thref{imcf_uniqueness} are CMC and that $g$ is a warped product metric. We obtain from this the following rigidity theorem for the Schwarzschild manifold, also stated under the more general assumption that $R_{g}=0$.

\begin{theorem}[Uniqueness of Umbilical Foliations in Vacuum] \thlabel{umbilic_rig}
Let $(M^{n},g)$ be an asymptotically flat Riemannian manifold with scalar curvature $R_{g}=0$ and CMC boundary $\partial M= \Sigma$. Suppose that $M^{n}$ is foliated by a solution $\{ \Sigma_{t} \}_{0 < t < \infty}$ to IMCF such that each $\Sigma_{t}$ is totally umbilical.

Then $(M^{n},g)$ is isometric to the exterior of the coordinate sphere $\{ r= r_{0} \}$ in the Schwarzschild manifold of mass $m_{0}$ for the constants $r_{0}$, $m_{0}$ of $\Sigma$ given in \thref{stability}.
\end{theorem}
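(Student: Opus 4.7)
The plan is to combine the structural results \thref{round} and \thref{imcf_uniqueness} with a direct ODE integration. First, since each leaf $\Sigma_{t}$ is totally umbilical, \thref{round} tells us the induced metric on $\Sigma = \Sigma_{0}$ is the round sphere metric, $\sigma_{ij}(x,0) = r_{0}^{2}\, \sigma^{\mathbb{S}^{n-1}}_{ij}(x)$, where $r_{0} = (|\Sigma|/w_{n-1})^{1/(n-1)}$. In particular $R_{\sigma_{0}} = (n-1)(n-2)/r_{0}^{2}$ is constant on $\Sigma$. Applying \thref{imcf_uniqueness} with $\overline{R}_{0} = R_{g} = 0$, the pulled-back metric on $\Sigma \times [0,\infty)$ has the warped form
\[ g = u(x,t)^{2}\, dt^{2} + r_{0}^{2} e^{2t/(n-1)}\, \sigma^{\mathbb{S}^{n-1}}(x), \]
where $u > 0$ solves the IVP \eqref{initial_value} with initial datum $u(x,0) = 1/H_{0}$.

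Next, I would exploit that both the initial datum $1/H_{0}$ and the coefficient $R_{\sigma_{0}}$ are spatially constant, and seek a solution depending only on $t$. Then $\Delta_{\sigma_{0}} u \equiv 0$ and \eqref{initial_value} reduces to the ODE
\[ u'(t) = \frac{n}{2(n-1)}\, u(t) - \frac{(n-1)(n-2)}{2 r_{0}^{2}}\, e^{-2t/(n-1)}\, u(t)^{3}, \qquad u(0) = 1/H_{0}. \]
Because \thref{imcf_uniqueness} asserts uniqueness for the quasilinear parabolic IVP, this spatially constant solution must in fact be \emph{the} solution, so $u = u(t)$.

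Finally, to identify $g$ with Schwarzschild, I introduce the radial coordinate $r(t) = r_{0} e^{t/(n-1)}$, so that $r^{2}\sigma^{\mathbb{S}^{n-1}}$ is the induced metric on $\Sigma_{t}$ and $dt = (n-1)\, dr/r$. The substitution $v = u^{-2}$ linearizes the ODE to
\[ v' + \frac{n}{n-1}\, v = \frac{(n-1)(n-2)}{r_{0}^{2}}\, e^{-2t/(n-1)}, \]
which is solved directly with the integrating factor $e^{nt/(n-1)}$. Imposing $v(0) = H_{0}^{2}$ and using the identity $H_{0}^{2} = \frac{(n-1)^{2}}{r_{0}^{2}}\bigl(1 - 2m_{0}/r_{0}^{n-2}\bigr)$, which is equivalent to the definition \eqref{m_0} of $m_{0}$, one obtains
\[ v(t) = \frac{(n-1)^{2}}{r^{2}}\left(1 - \frac{2m_{0}}{r^{n-2}}\right), \qquad u^{2}\, dt^{2} = \frac{dr^{2}}{1 - 2m_{0}/r^{n-2}}. \]
Therefore $g = \frac{dr^{2}}{1 - 2m_{0}/r^{n-2}} + r^{2}\sigma^{\mathbb{S}^{n-1}}$ on $\mathbb{S}^{n-1} \times [r_{0},\infty)$, which is precisely the exterior of $\{r = r_{0}\}$ in the Schwarzschild manifold of mass $m_{0}$. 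The only subtle step is invoking the parabolic uniqueness inside \thref{imcf_uniqueness} to collapse the PDE to an ODE; the rest is routine integration and a change of variable, and positivity $u > 0$ for $r \geq r_{0}$ (i.e.\ $1 - 2m_{0}/r^{n-2} > 0$) is automatic from $H_{0} > 0$.
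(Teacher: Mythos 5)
Your proposal is correct and follows essentially the same route as the paper: invoke \thref{round} for the round initial metric, use the parabolic uniqueness in \thref{imcf_uniqueness} to collapse the IVP \eqref{initial_value} to the spatially constant ODE, and integrate to recover the Schwarzschild warped product. The only difference is cosmetic -- you linearize via the Bernoulli substitution $v=u^{-2}$ and an integrating factor, whereas the paper substitutes $U(r)=\tfrac{1}{n-1}\,r/u(r)$ and recognizes $U=\sqrt{1-2m_{0}/r^{n-2}}$ directly; both yield the same metric and the same identification of $m_{0}$.
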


\begin{proof}
By \thref{round}, $\sigma_{ij}(y,0) = r_{0}^{2} \sigma^{\mathbb{S}^{n-1}}_{ij}$, and so

\begin{equation*}
    g_{ij}(y,t) = u(y,t)^{2} dt^{2} + r_{0}^{2}e^{\frac{2t}{n-1}} \sigma_{ij}^{\mathbb{S}^{n-1}}
\end{equation*}
for $u$ satisfying \eqref{initial_value} with $R_{g}=0$ and $R_{\sigma_{0}} = (n-1)(n-2) r_{0}^{-2}$. Since $u(y,0)= H_{0}^{-1}$ for the constant $H_{0}$, the function $u$ is uniquely determined by solution of the corresponding ODE problem
\begin{eqnarray} \label{IVP}
    \frac{d}{dt} u(t)&=& \frac{n}{2(n-1)} u(t) - \frac{(n-1)(n-2)r_{0}^{-2}}{2} e^{-\frac{2t}{n-1}} u(t)^{3} \\
   u(0) &=& H_{0}^{-1} \nonumber
\end{eqnarray}
in view of \thref{imcf_uniqueness}. Let us make the transformation

\begin{eqnarray*}
    r(t)&=& r_{0} e^{\frac{t}{n-1}}, \\
    U(r) &=& \frac{1}{n-1} \frac{r}{u(r)},
\end{eqnarray*}
so that \eqref{IVP} becomes

\begin{eqnarray*}
    \frac{d}{dr} U &=&  \frac{1}{n-1} u(r)^{-1} - \frac{r}{(n-1) u(r)^{2}} \frac{d}{dr} u(r) \\
    &=& \frac{1}{n-1} u(r)^{-1} - \frac{1}{u(r)^{2}} \left(\frac{n}{2(n-1)} u(r) - \frac{(n-1)(n-2)}{2r^{2}} u(r)^{3} \right) \\
    &=& \frac{1}{r} U - \frac{(n-1)^{2}}{r^{2}}U^{2} \left( \frac{nr}{2 (n-1)^{2}} U^{-1} - \frac{(n-2)r}{2(n-1)^{2}} U^{-3} \right) \\
    &=& \frac{(n-2)}{2} \frac{U^{-1} - U}{r}, \\
    U(r_{0}) &=& \frac{1}{n-1} H_{0}r_{0} = U_{0}.
\end{eqnarray*}
The solution is

\begin{eqnarray}
    U(r)&=&\sqrt{1- 2 \frac{m_{0}}{r^{n-2}}}, \nonumber \\
    m_{0} &=& \frac{1}{2} r_{0}^{n-2} \left( 1 - U_{0}^{2} \right), \nonumber \\
          &=& \frac{1}{2} r_{0}^{n-2} \left( 1- \frac{1}{(n-1)^{2}} H_{0}^{2}r_{0}^{2} \right), \nonumber
\end{eqnarray}
and computing the mass $m$ of the metric

\begin{equation*}
    g_{ij}= \frac{1}{U(r)^{2}} dr^{2} + r^{2} \sigma^{\mathbb{S}^{n-1}}_{ij}
\end{equation*}
yields $m=m_{0}$ (also note that $U_{0} >0$ implies $r_{0} > (2m)^{\frac{1}{n-2}}$). 
\end{proof}

\thref{umbilic_rig} covers the case $H_{\Sigma} = H_{0}$ in \thref{cmc_rigidity}. For the case $V|_{\Sigma} = V_{0}$, we use the Codazzi equation and the Gauss equation for a hypersurface $\Sigma^{n-1}$ in a Riemannian manifold $(M^{n},g)$, which are respectively given by

\begin{eqnarray}
    \text{div}h (X) - X(H) &=& \text{Ric}(\nu, X), \label{codazzi} \\
     \text{Ric}(\nu,\nu) &=& \frac{1}{2} \left( \frac{n-2}{n-1}H^{2} - |\mathring{h}|^{2} - R_{\sigma} + R_{g} \right). \label{gauss}
\end{eqnarray}
Here, $X \in T_{y} \Sigma$ is a tangent vector and $\text{div} h (X)= \nabla_{e_{i}} h(e_{i},X)$ is the divergence one-form of the $2$-tensor $h$. Knowing that $\sigma_{ij}= r_{0}^{2} \sigma^{\mathbb{S}^{n-1}}_{ij}$ on $\Sigma$ whenever the equality is achieved, these equations allow us to reduce everything to the CMC case.
\begin{proposition} \thlabel{cmc}
Let $(M^{n},g)$ be a static Riemannian manifold with static potential $V$. Suppose $\Sigma^{n-1} \subset M^{n}$ is an immersed hypersurface such that

\begin{eqnarray*}
    V|_{\Sigma} &=& V_{0}, \\
    R_{\sigma} &=& R_{0}, \textit{    and} \\
    h_{ij} &=& \frac{H}{n-1} \sigma_{ij},
\end{eqnarray*}
for constants $V_{0},R_{0}$. Then $\Sigma$ is CMC.
\end{proposition}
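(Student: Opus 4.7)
The strategy is to derive two independent relations between $H$ and the normal derivative $\rho := \partial V / \partial \nu$ on $\Sigma$, combine them to obtain a polynomial equation in $H$ with constant coefficients, and conclude that $H$ is locally (and hence globally) constant. The tools are the Codazzi and Gauss equations together with the static equation $\nabla^2 V = V \text{Ric}$, applied in both its normal and tangential components.

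Since $V|_\Sigma = V_0$ is constant, $\nabla V = \rho \nu$ on $\Sigma$. For any $X \in T\Sigma$,
\begin{equation*}
X(\rho) = \nabla^2 V(X, \nu) + \langle \nabla V, \nabla_X \nu \rangle = V_0 \, \text{Ric}(X, \nu),
\end{equation*}
where the second term vanishes because $\nabla_X \nu$ is tangent to $\Sigma$ while $\nabla V$ is normal. On the other hand, the umbilic ansatz $h = \frac{H}{n-1}\sigma$ together with parallelism of $\sigma$ gives $\text{div}\,h(X) = \frac{1}{n-1} X(H)$, so Codazzi \eqref{codazzi} yields $\text{Ric}(\nu, X) = -\frac{n-2}{n-1} X(H)$. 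Comparing, the function $\rho + \frac{(n-2)V_0}{n-1} H$ is constant on $\Sigma$; call this constant $C$. This is the first relation.

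For the second relation, I first note that tracing $\nabla^2 V = V \text{Ric}$ with $\Delta V = 0$ and $V > 0$ forces $R_g = 0$ throughout $M$. Combining this with $|\mathring{h}|^2 = 0$ in the Gauss equation \eqref{gauss} yields
\begin{equation*}
\text{Ric}(\nu, \nu) = \frac{n-2}{2(n-1)} H^2 - \frac{R_0}{2}.
\end{equation*}
Independently, for tangential $X, Y \in T\Sigma$, the decomposition $\nabla_X Y = \nabla^\Sigma_X Y - h(X, Y)\nu$ together with $V|_\Sigma \equiv V_0$ gives $\nabla^2 V(X, Y) = \rho \, h(X, Y)$ on $\Sigma$. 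Tracing this over $T\Sigma$ via the static equation, and using $R_g = 0$ to replace the tangential trace of $\text{Ric}$ by $-\text{Ric}(\nu,\nu)$, produces $\text{Ric}(\nu, \nu) = -\rho H / V_0$. Equating the two expressions for $\text{Ric}(\nu, \nu)$ and substituting $\rho = C - \frac{(n-2)V_0}{n-1}H$ yields a quadratic in $H$ whose coefficients depend only on the constants $V_0, R_0, C, n$. Hence $H$ takes only finitely many values on $\Sigma$, and by continuity on the (connected) hypersurface $H$ must be constant.

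The only delicate computational point is the tangential Hessian identity $\nabla^2 V(X, Y)|_\Sigma = \rho\, h(X, Y)$, which requires carefully tracking the normal component of $\nabla_X Y$ for $X, Y \in T\Sigma$; the remaining steps are an algebraic assembly of standard Codazzi/Gauss identities under the umbilic ansatz, and I do not foresee any serious obstacle.
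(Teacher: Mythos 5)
Your proposal is correct and takes essentially the same route as the paper: Codazzi plus the static equation yield $\frac{\partial V}{\partial\nu}=-\frac{n-2}{n-1}V_{0}H+\mathrm{const}$, and the Gauss equation combined with the traced static equation on $\Sigma$ (your tangential Hessian identity is exactly the paper's ``static equation on surfaces'' with $\Delta_{\sigma}V=0$) produces the same constant-coefficient quadratic in $H$. The only cosmetic difference is the last step: you invoke that $H$ takes finitely many values and is continuous on a connected $\Sigma$, whereas the paper differentiates the relation and excludes $X(H)\neq 0$ by a contradiction along integral curves -- both arguments are sound (and both, strictly speaking, give $H$ constant on each connected component, which is all the paper's proof gives as well).
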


\begin{proof}

We begin by considering the Codazzi equation on $\Sigma$. Since $h$ is pure trace, the left-hand side of \eqref{codazzi} is

\begin{equation*}
    \text{div}h (X) - X(H) = -\frac{n-2}{n-1} X(H).
\end{equation*}
On the other hand, the static equation transforms the right-hand side as

\begin{equation*}
    \text{Ric}(X,\nu) = \frac{1}{V} \langle \nabla_{X} \nabla V, \nu \rangle = \frac{1}{V_{0}} X (\frac{\partial V}{\partial \nu} ),
\end{equation*}
where we have used that $V|_{\Sigma}$ is constant. Altogether,

\begin{equation*}
   -\frac{n-2}{n-1} X(H) = \frac{1}{V_{0}} X( \frac{\partial V}{\partial \nu}).
\end{equation*}
and so
\begin{equation} \label{H_nu(V)}
    \frac{\partial V}{\partial \nu}(y) = - \frac{n-2}{n-1} V_{0}H(y) + \beta
\end{equation}
for some fixed $\beta \in \mathbb{R}$. Finally, we combine the Gauss equation \eqref{gauss} with the static equation on surfaces
\begin{equation} \label{surface_static}
    \Delta_{\sigma} V + \frac{\partial V}{\partial \nu} H = -V \text{Ric} (\nu, \nu),
\end{equation}
and in view of \eqref{H_nu(V)} we obtain

\begin{equation*}
    - \frac{n-2}{n-1} V_{0} H^{2} + \beta H = \frac{1}{2}\left( V_{0} R_{0} - \frac{n-2}{n-1} V_{0} H^{2} \right). 
\end{equation*}
Differentiating gives 
 
\begin{equation} \label{X(H)}
 \left( -\frac{n-2}{n-1} V_{0} H(y) + \beta \right) X(H) = 0
\end{equation}
for $x \in \Sigma$, $X \in T_{x} \Sigma$. Suppose that $X(H) \neq 0$. Then $H(y)= \frac{n-1}{n-2} \beta V_{0}^{-1}$. Extending $X$ to a smooth vector field in a neighborhood $U_{x} \subset \Sigma$, we consider the integral curve $\alpha$ of $X$ with $\alpha(0)=y$. For $t \in (0,\epsilon)$, we have $\frac{d}{dt} H(\alpha(t)) \neq 0$ and $H(\alpha(t)) \neq \frac{n-1}{n-2} \beta V_{0}^{-1}$. This contradicts \eqref{X(H)}. Therefore, $\nabla_{\Sigma} H =0$ identically on $\Sigma$.
\end{proof}
This proves \thref{cmc_rigidity}. We remark that full rigidity of \eqref{inequality} is more difficult to understand (we tried). Nevertheless, this could provide a fruitful research direction in the future.





\section{A Conformal Approach}

In this section, we prove \thref{level_set_rig}. To introduce our approach, we recall the isotropic coordinate chart $U= \mathbb{R}^{n} \setminus B_{(\frac{m}{2})^{\frac{1}{n-2}}}(0)$ of the Schwarzschild manifold $(M^{n},g_{m})$, $m>0$, wherein the metric and potential have the form

\begin{eqnarray*}
    g_{m}(x) &=& \left( 1 + \frac{m}{2} |x|^{2 - n} \right)^{\frac{4}{n-2}} \delta, \\
    V_{m}(x) &=& \frac{1 - \frac{m}{2} |x|^{2 - n}}{1 + \frac{m}{2} |x|^{2 - n}},
\end{eqnarray*}
with $\delta$ being the Euclidean metric on $U$. These coordinates make clear that the Schwarzschild metrics of mass $m$ and $-m$ on $M^{n}$ are related by the conformal transformation

\begin{equation*}
    g_{-m} = V_{m}^{\frac{4}{n-2}} g_{m}.
\end{equation*}
Indeed, this type of conformal transformation can be made for \textit{any} static metric, a fact used for example by Anderson in \cite{structure_space_solutions}, Section 1.

\begin{proposition} 
Let $(M^{n},g,V)$ be an asymptotically flat static system with (possibly disconnected) boundary $\partial M$. If $V_{\partial M} >0$, then the conformal metric and corresponding function

\begin{eqnarray} \label{conformal_metric}
    g_{-} &=& V^{\frac{4}{n-2}} g, \\
    V_{-} &=& V^{-1},
\end{eqnarray}
also form an asymptotically flat static system on $M^{n}$.
\end{proposition}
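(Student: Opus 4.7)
The plan is to verify in sequence each property required for $(M^n, g_-, V_-)$ to be an asymptotically flat static system.

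First, I would confirm that $V$ is strictly positive throughout $M^n$, so that $g_- = V^{4/(n-2)} g$ is a genuine Riemannian metric and $V_- = V^{-1}$ is smooth. Since $V$ is $g$-harmonic on $M^n$, continuous on $\partial M$ with $V|_{\partial M} > 0$, and asymptotic to $1$ at infinity, the weak maximum principle on an asymptotically flat manifold gives $V \ge \min\{\min_{\partial M} V,\, 1\} > 0$. I would also record the useful preliminary fact that static manifolds are scalar flat: tracing $\nabla^2_g V = V\,\text{Ric}_g$ gives $\Delta_g V = V R_g$, and combined with $\Delta_g V = 0$ and $V > 0$ this forces $R_g \equiv 0$, an identity that will significantly streamline the conformal computations below.

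Next I would verify the static equations for $(g_-, V_-)$ by direct computation using the standard conformal transformation laws for the Ricci tensor and the Hessian under $\tilde g = e^{2\omega} g$ with $\omega = \tfrac{2}{n-2}\log V$. After substituting the hypothesis $\nabla^2_g V = V\,\text{Ric}_g$ and using $R_g = 0$, both $\nabla^2_{g_-} V_-$ and $V_- \,\text{Ric}_{g_-}$ collapse to the same tensor built from $V$, $\nabla V$, and $\nabla^2_g V$, which establishes the first static equation. For the harmonicity $\Delta_{g_-} V_- = 0$, I would either trace this tensor identity using $R_{g_-} = 0$ (itself immediate from the conformal scalar curvature formula together with $R_g = 0$ and $\Delta_g V = 0$), or apply the conformal Laplacian identity directly to the product $V \cdot V_- \equiv 1$.

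Finally, I would check asymptotic flatness in the asymptotically flat chart $y^i$ of $g$. Expanding $V^{4/(n-2)} = 1 + \mathcal{O}_2(|y|^{2-n})$ from \eqref{V_expansion}, and noting that the decay rate $|y|^{2-n}$ is faster than $|y|^{(2-n)/2}$ for $n > 2$, one obtains $(g_-)_{kl} - \delta_{kl} = (V^{4/(n-2)} - 1)\delta_{kl} + V^{4/(n-2)}\eta_{kl} \in \mathcal{O}_2(|y|^{(2-n)/2})$ in the same chart. Likewise, expanding $V_- = V^{-1}$ about its boundary value at infinity gives $V_- = 1 - (-m)|y|^{2-n} + w_-(y)$ with $w_- \in \mathcal{O}_2(|y|^{2-n})$, matching the static expansion \eqref{V_expansion} with ADM mass $-m$.

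The only step with genuine computational risk is the verification of the first static equation in the third paragraph, but the scalar-flatness $R_g = 0$ cancels enough cross terms that the identity reduces to a mechanical check, so I do not anticipate a serious obstacle.
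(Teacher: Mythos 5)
Your proposal is correct, but it is organized differently from the paper's argument, so a comparison is worth recording. The paper does not verify the static equations for $(g_{-},V_{-})$ by brute force; instead it passes through the intermediate metric $g_{0}=V^{\frac{2}{n-2}}g$ and the function $U=\log V$, shows via a flux identity that $\Delta_{g_{0}}U=0$ and via the conformal Ricci formula that the static equations for $(g,V)$ are equivalent to the system $\text{Ric}_{g_{0}}=\frac{n-1}{n-2}\,dU\otimes dU$, $\Delta_{g_{0}}U=0$, and then observes that this system is invariant under $U\mapsto -U$, so that both $g_{\pm}=e^{\mp\frac{2}{n-2}U}g_{0}$ are static with potentials $V_{\pm}=e^{\pm U}$; this yields the conclusion for $g_{-}$ without ever computing $\text{Ric}_{g_{-}}$ for the specific factor $V^{\frac{4}{n-2}}$. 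Your route — record $R_{g}=0$ by tracing \eqref{static_equations}, then check $\nabla^{2}_{g_{-}}V_{-}=V_{-}\text{Ric}_{g_{-}}$ and $\Delta_{g_{-}}V_{-}=0$ directly from the conformal transformation laws — is a perfectly valid substitute: the substitution of $\nabla^{2}_{g}V=V\,\text{Ric}_{g}$ and $\Delta_{g}V=0$ makes both sides collapse to $-V^{-1}\text{Ric}_{g}+\tfrac{2n}{n-2}V^{-3}dV\otimes dV-\tfrac{2}{n-2}V^{-3}|\nabla V|_{g}^{2}\,g$, and the harmonicity follows either from your conformal Laplacian argument or from tracing once one knows $R_{g_{-}}=0$. (One small remark: the Hessian identity already closes using only $\nabla^{2}_{g}V=V\,\text{Ric}_{g}$ and $\Delta_{g}V=0$, so the separate appeal to $R_{g}=0$ there is harmless but not actually needed; it is genuinely needed only in the scalar-curvature/conformal-Laplacian step.) What the paper's symmetric formulation buys is economy — the computation is done once and produces both signs, which is the conceptual heart of the mass-reversal trick — while your direct verification is more mechanical but also more self-contained, and you additionally make explicit two points the paper leaves terse: the positivity $\inf_{M}V>0$ via the maximum principle (the paper simply asserts it), and the explicit expansion $V_{-}=1+m|y|^{2-n}+w_{-}$ with $w_{-}\in\mathcal{O}_{2}(|y|^{2-n})$ identifying the new ADM mass as $-m$, consistent with the flux computation \eqref{m_{+}} used later in the paper.
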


\begin{remark}
$H_{\partial M} > 0$ is a sufficient condition to ensure $V_{\partial M} >0$. Indeed, if $H_{\partial M} >0$ and $V(x) = 0 = \min_{\overline{M^{n}}} V$ for $x \in \partial M$, then $\Delta_{\partial M} V(x) \geq 0$ by the maximum principle and $\frac{\partial V}{\partial \nu}(x) > 0$ by the Hopf Lemma. Therefore,

\begin{equation*}
    \Delta_{\partial M} V (x) + H(x) \frac{\partial V}{\partial \nu} (x) > 0,
\end{equation*}
which contradicts the static equation \eqref{surface_static}.
\end{remark}

\begin{proof}
Since $\inf_{M} V > 0$, conformal scalings of $g$ by a power of $V$ produce a regular metric. It is easier to see that \eqref{conformal_metric} is static if we first consider the conformal scaling

\begin{equation*}
    g_{0} = V^{\frac{2}{n-2}} g.
\end{equation*}
For any open domain $\Omega \subset M$, the function $U= \ln (V)$ satisfies

\begin{eqnarray} \label{conformal_harmonic}
    \int_{\Omega} \Delta_{g_{0}} U d g_{0} &=& \int_{\partial \Omega} \frac{\partial U}{\partial \nu_{0}} d \sigma_{0} = \int_{\partial \Omega} V^{-\frac{1}{n-2}} \frac{\partial U}{\partial \nu} V^{\frac{n-1}{n-2}} d\sigma \\
    &=& \int_{\partial \Omega} \frac{\partial V}{\partial \nu} d \sigma. \nonumber
\end{eqnarray}
On the other hand, the transformation formula for Ricci curvature, c.f. page 59 in \cite{einstein_manifolds}, is

\begin{eqnarray} \label{conformal_ricci}
 \text{Ric}_{g_{o}} &=& \text{Ric}_{g} - \nabla_{g}^{2} U  + \frac{1}{n-2} d U \otimes d U - \frac{1}{n-2} (\Delta_{g} U +  |\nabla_{g} U|^{2}) g \\
 &=& \text{Ric}_{g} - V^{-1} \nabla_{g}^{2} V + V^{-2} d V \otimes d V + \frac{1}{n-2} d U \otimes d U - \frac{V^{-1}}{(n-2)} \Delta_{g} V g \nonumber \\
 &=& \text{Ric}_{g} - V^{-1} \nabla_{g}^{2} V - \frac{V^{-1}}{(n-2)} \Delta_{g} V g + \frac{n-1}{n-2} d U \otimes d U \nonumber
\end{eqnarray}
From \eqref{conformal_harmonic} and \eqref{conformal_ricci}, we see that $g$ and $V$ solve the static equations if and only if $g_{0}$ and $U$ solve the system

\begin{eqnarray*}
    \text{Ric}_{g_{0}} &=& \frac{n-1}{n-2} d U \otimes d U, \label{conformal_system} \\
    \Delta_{g_{0}} U &=& 0. \nonumber
\end{eqnarray*}
In fact, these equations are invariant under the change $U \rightarrow - U$, and so the metrics

\begin{eqnarray*}
    g_{\pm} = e^{\mp \frac{2}{n-2} U } g_{0}
\end{eqnarray*}
are both static with respective potential functions $V_{\pm}=e^{ \pm U}$ (we choose notation so that $g_{+}=g$ and $g_{-}= V^{\frac{4}{n-2}} g$ for a reason that will become clear shortly).

To conclude, we address the asymptotics of the conformal system. The derivatives of $(g_{-})_{kl}$ may be expressed as derivatives of $g_{kl}$ and $V$, and so the expansions \eqref{g_expansion} and \eqref{V_expansion} imply

\begin{equation*}
    (g_{0})_{kl} = \delta_{kl} + \eta_{kl}, \hspace{1cm} \eta_{kl} \in o_{2}(|x|^{\frac{1}{2}(2-n)}).
\end{equation*}
Likewise, since $V(x) \rightarrow 1$ as $|x| \rightarrow \infty$, the potential $V_{-}=V^{-1}$ must expand on the same orders as $V$.
\end{proof}

The sign of the ADM mass of $(M^{n},g)$ changes under the conformal scaling \eqref{conformal_metric}. However, \thref{minkowski} applies regardless of the sign of $m$, c.f. Remark 1.5 in \cite{static_minkowski}, and so we may apply this theorem to $(M, g_{-})$.

\begin{theorem} \thlabel{conformal_rig}
Let $(M^{n},g,V)$, $3 \leq n \leq 7$, be an asymptotically flat static system with connected, outer-minimizing boundary $\partial M = \Sigma$. If $\Sigma$ is outer-minimizing with respect to the metric $g_{-}$ defined in \eqref{conformal_metric}, then we have the inequality

\begin{equation} \label{conformal_inequality}
    \frac{1}{(n-1)w_{n-1}}\int_{\Sigma} VH d \sigma \geq \left(\frac{\int_{\Sigma} V^{2 \frac{n-1}{n-2}} d\sigma}{w_{n-1}} \right)^{\frac{n-2}{n-1}}.
\end{equation}
\end{theorem}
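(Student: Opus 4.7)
The plan is to apply \thref{minkowski} directly to the conformally related system $(M^{n},g_{-},V_{-})$, since the preceding proposition already guarantees that this is an asymptotically flat static system. First I would verify the hypotheses of \thref{minkowski} in the conformal setting: the boundary is still $\Sigma$, which is outer-minimizing with respect to $g_{-}$ by assumption, and the new potential $V_{-}=V^{-1}$ is bounded because $V$ is a positive harmonic function tending to $1$ at infinity with $V|_{\Sigma}>0$, so the minimum principle gives $\inf_{M} V >0$. Reading off the asymptotic expansion $V = 1 - m|y|^{2-n}+w$ shows $V_{-}= 1 + m|y|^{2-n}+\tilde w$, so the ADM mass of the conformal system is $-m$.

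Next I would compute how the three quantities entering \eqref{inequality} transform under $g_{-}=V^{\frac{4}{n-2}}g$. The induced boundary measure changes by $d\sigma_{-}= V^{2\frac{n-1}{n-2}}d\sigma$, which already identifies the right-hand side of \thref{minkowski} for the conformal system with the right-hand side of \eqref{conformal_inequality}. The mean curvature of $\Sigma$ in $(M,g_{-})$, computed with the outward-pointing unit normal $\nu_{-}= V^{-\frac{2}{n-2}}\nu$, is given by the standard conformal formula
\begin{equation*}
 H_{-} = V^{-\frac{n}{n-2}}\Bigl(V H + \tfrac{2(n-1)}{n-2}\tfrac{\partial V}{\partial \nu}\Bigr).
\end{equation*}
Combining the three transformations, the exponents of $V$ telescope and leave
\begin{equation*}
 V_{-}H_{-}\,d\sigma_{-} = V H\, d\sigma + \tfrac{2(n-1)}{n-2}\tfrac{\partial V}{\partial\nu}\,d\sigma.
\end{equation*}

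Now I would integrate over $\Sigma$ and invoke the boundary integral mass formula \eqref{mass_formula}, which gives $\int_{\Sigma}\tfrac{\partial V}{\partial \nu}\,d\sigma = (n-2)w_{n-1}m$, and therefore
\begin{equation*}
 \frac{1}{(n-1)w_{n-1}}\int_{\Sigma} V_{-}H_{-}\,d\sigma_{-} \;=\; \frac{1}{(n-1)w_{n-1}}\int_{\Sigma} V H\, d\sigma + 2m.
\end{equation*}
Applying \thref{minkowski} to $(M,g_{-},V_{-})$, whose ADM mass is $-m$, the $+2m_{-}$ term cancels exactly the $+2m$ just obtained, leaving
\begin{equation*}
 \frac{1}{(n-1)w_{n-1}}\int_{\Sigma} V H\, d\sigma \;\ge\; \Bigl(\frac{\int_{\Sigma} V^{2\frac{n-1}{n-2}}\,d\sigma}{w_{n-1}}\Bigr)^{\frac{n-2}{n-1}},
\end{equation*}
which is exactly \eqref{conformal_inequality}.

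The main obstacle in this plan is the bookkeeping step: to verify the clean cancellation above one has to get the boundary mean curvature transformation law exactly right (conventions for the normal direction and the sign in the second fundamental form matter), and then check that the powers of $V$ from $d\sigma_{-}$, $H_{-}$, and $V_{-}$ really do combine to the identity, so that the extra term contributed by $\partial_\nu V$ is precisely $2(n-1)w_{n-1}m$ after integration. Beyond that, the argument is purely an application of \thref{minkowski} together with the conformal machinery already set up earlier in this section.
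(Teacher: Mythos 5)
Your proposal is correct and follows essentially the same route as the paper: apply the static Minkowski inequality \thref{minkowski} to the conformal system $(M^{n},g_{-},V_{-})$, use the conformal transformation laws for $d\sigma$, $H$, and the mass formula \eqref{mass_formula} to identify $\int_{\Sigma}V_{-}H_{-}d\sigma_{-}$ with $\int_{\Sigma}VH\,d\sigma + 2(n-1)w_{n-1}m$, and let $m_{-}=-m$ cancel the mass terms. The bookkeeping you flag as the main obstacle works out exactly as you computed, matching the paper's calculation.
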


\begin{proof}
Once again using the integral formula \eqref{mass_formula}, the ADM mass $m_{-}$ of the metric $g_{-}$ is

\begin{eqnarray} \label{m_{+}}
    m_{-} &=& \frac{1}{(n-2) w_{n-1}} \int_{\Sigma} \frac{\partial V_{-}}{\partial \nu_{-}} d \sigma_{-} = - \frac{1}{(n-2) w_{n-1}} \int_{\Sigma} \frac{\partial V}{\partial \nu} d \sigma \\ 
    &=& -m, \nonumber
\end{eqnarray}
where $m$ is the ADM mass of $g$. The transformation formula for mean curvature, c.f. for example equation 4.3 in \cite{photon_sphere_spinorial} or equation 7 in \cite{pmt_less_than_8}, gives

\begin{equation*}
    H_{-} = V^{-\frac{n}{n-2}} ( 2 \frac{n-1}{n-2} \frac{\partial V}{\partial \nu} + HV). 
\end{equation*}
In particular,

\begin{eqnarray} \label{HV_conformal}
    \frac{1}{(n-1) w_{n-1}}\int_{\Sigma} V_{-} H_{-} d\sigma_{-} &=& \frac{1}{(n-1) w_{n-1}} \int_{\Sigma} V^{-\frac{2}{n-2}} \left( 2 \frac{n-1}{n-2} \frac{\partial V}{\partial \nu} + VH \right) V^{\frac{2}{n-2}} d\sigma \nonumber \\
    &=& \frac{2}{(n-2) w_{n-1}} \int_{\Sigma} \frac{\partial V}{\partial \nu} d\sigma + \frac{1}{(n-1)w_{n-1}} \int_{\Sigma} VH d \sigma \\
    &=& -2m_{-} + \frac{1}{(n-1)w_{n-1}} \int_{\Sigma} VH d \sigma \nonumber
\end{eqnarray}
Now, $g_{-}$ is asymptotically flat and static with the appropriately scaled potential $V_{-}$. Since $\Sigma$ is outer-minimizing with respect to $g_{-}$, the Minkowski inequality

\begin{equation*}
    \frac{1}{(n-1) w_{n-1}} \int_{\Sigma} V_{-} H_{-} d \sigma_{-} + 2m_{-} \geq \left( \frac{|\Sigma|_{\sigma_{-}}}{w_{n-1}} \right)^{\frac{n-2}{n-1}}
\end{equation*}
holds on $\Sigma$. Rewriting in terms of the original geometric quantities using \eqref{m_{+}} and \eqref{HV_conformal} yields \eqref{conformal_inequality}.
\end{proof}
\begin{proof}[Proof of \thref{level_set_rig}]
We would like to apply inequality \eqref{conformal_inequality}. We assume that $\Sigma$ is outer-minimizing with respect to the static metric $g$, and we need to show that it is also outer-minimizing with respect to the conformal metric $g_{-}= V^{\frac{4}{n-2}} g$. First notice that 

\begin{equation*}
    \inf_{M^{n}} V = V_{0},
\end{equation*}
Indeed, by the asymptotic expansion, $\inf_{M^{n}} V < 1$ when $m>0$. The smooth, bounded domain $\Omega_{\epsilon}= \{ V < 1 - \epsilon \} \subset M^{n}$ is therefore non-empty for small $\epsilon >0$. $V|_{\partial \Omega_{\epsilon}}$ equals $1 - \epsilon$ or $V_{0}$, and by the elliptic maximum principle $\inf_{\Omega_{\epsilon}} V = \min_{\partial \Omega_{\epsilon}} V$. Since $\inf_{\Omega_{\epsilon}} V < 1 - \epsilon$, we must have $\inf_{\partial \Omega_{\epsilon}} V = V_{0}$, and letting $\epsilon \rightarrow 0$ yields the conclusion.

Now, for any bounded domain $\Omega \subset M^{n}$ with $\Sigma \subset \partial \Omega$, the outer-minimizing assumption says

\begin{equation*}
    |\Sigma|_{g} \leq |\partial \Omega \setminus \Sigma|_{g}.
\end{equation*}
Then the areas with respect to $g_{-}$ obey

\begin{eqnarray*}
    |\Sigma|_{g_{-}} &=& V_{0}^{2\frac{n-1}{n-2}} |\Sigma|_{g} \leq \left( \min_{ \partial \Omega \setminus \Sigma} V^{2\frac{n-1}{n-2}} \right) |\Sigma|_{g} \\
    &\leq& (\min_{ \partial \Omega \setminus \Sigma} V^{2\frac{n-1}{n-2}}) |\partial \Omega \setminus \Sigma|_{g} \leq |\partial \Omega \setminus \Sigma|_{g_{-}} \nonumber,
\end{eqnarray*}
meaning $\Sigma$ is outer-minimizing with respect to $g_{-}$. Altogether, we can apply the inequality \eqref{conformal_inequality} to $\Sigma$. We obtain the inequality \eqref{static_level_set} by factoring $V_{0}$.

For the rigidity aspect of the theorem, the inequality \eqref{level_set} is equivalent to the inequality \eqref{inequality} on $(M^{n},g_{-})$, so since $V^{-}|_{\Sigma} = (V_{0})^{-1}$ is constant $(M^{n},g_{-})$ is isometric to a piece of the Schwarzschild manifold with mass $-m$. So

\begin{equation*}
    g = V_{-}^{\frac{4}{n-2}} g_{-} = V_{-m}^{\frac{4}{n-2}} g_{-m} = g_{m}.
\end{equation*}
\end{proof}

\section{Applications}

In this section, we prove uniqueness of Schwarzschild-stable static extensions of constant Bartnik data, of equipotential and Schwarzschild-stable CMC $2$-spheres, and of equipotential photon surfaces with small Einstein-Hilbert energy. We first show via local arguments that the second fundamental form of the boundary is pure trace in each of these situations. Then, we will use \thref{level_set_rig} to show saturation in inequality \eqref{inequality}.

\subsection{Static Metric Extensions}
In order to prove \thref{bartnik_data}, we first require an eigenvalue estimate on CMC spheres. Recall in \thref{stability} that we defined the constants

\begin{eqnarray*}
    r_{0}&=& \left(\frac{|\Sigma|}{w_{n-1}} \right)^{\frac{1}{n-1}}, \\
    m_{0}&=& \frac{r_{0}^{n-2}}{2} \left( 1- \frac{r_{0}^{2}}{(n-1)^{2}} H_{0}^{2} \right) 
\end{eqnarray*}
associated with a CMC hypersurface $\Sigma$ of mean curvature $H_{0}$. For $n=3$, Christodolou and Yau showed in \cite{quasi_local_remarks} showed that the Hawking mass $m_{H}(\Sigma)=m_{0}$ of a stable CMC $2$-sphere is non-negative provided $R_{g} \geq 0$. In fact, their choice of test function implies a more general fact: the Hawking mass of \textit{any} CMC $2$-sphere is bounded below by the smallest eigenvalue of its stability operator. In higher dimensions, a different choice of test function yields an analogous estimate for $n$-spheres that are intrinisically round. 
\begin{theorem}[First Eigenvalue for CMC Hyperspheres] \thlabel{eigenvalue}
Let $(M^{n},g)$ be a Riemannian manifold, and let $\Sigma = X_{0}(\mathbb{S}^{n-1}) \subset M^{n}$ be an immersed CMC sphere. Assume that either

\begin{itemize} 
\item $n=3$, OR
\item $n > 3$ and $\sigma_{ij}= r_{0}^{2} \sigma^{\mathbb{S}^{n-1}}_{ij}$ on $\Sigma$.
\end{itemize} 
Then the first eigenvalue of the stability operator $S_{\Sigma}$ of $\Sigma$ satisfies the upper bound
\begin{equation} \label{eigenvalue_est}
    \lambda_{1} (S_{\Sigma}) \leq  \frac{n(n-1) m_{0}}{r_{0}^{n}} - \min_{x \in \Sigma} \frac{R_{g}(x)}{2}.
\end{equation}
Furthermore, if equality holds, then $\Sigma$ is totally umbilical in $(M^{n},g)$ and the extrinsic scalar curvature $R_{g}$ is constant over $\Sigma$.
\end{theorem}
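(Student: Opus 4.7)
The plan is to bound $\lambda_1(S_\Sigma)$ from above by the Rayleigh-quotient characterization
\[
\lambda_1(S_\Sigma) \;=\; \inf_{\substack{\phi\in C^\infty_0(\Sigma)\\ \phi\not\equiv 0}} \frac{\int_\Sigma \bigl(|\nabla\phi|^2 - (|h|^2 + \mathrm{Ric}(\nu,\nu))\phi^2\bigr)\,d\sigma}{\int_\Sigma \phi^2\,d\sigma},
\]
applied to carefully chosen test functions $\phi_1,\dots,\phi_n$ with zero mean that satisfy the pointwise identities $\sum_i \phi_i^2 = c_1$ and $\sum_i|\nabla \phi_i|^2 = c_2$ for explicit constants. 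Summing the Rayleigh bounds yields $\lambda_1(S_\Sigma)\,c_1|\Sigma|\le \int_\Sigma\bigl(c_2 - (|h|^2+\mathrm{Ric}(\nu,\nu))c_1\bigr)\,d\sigma$.

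For the case $n>3$ with $\sigma_{ij}=r_0^2\sigma^{\mathbb{S}^{n-1}}_{ij}$, I would take $\phi_i=y_i$, the restrictions of the Euclidean coordinate functions of the isometric embedding $\Sigma\hookrightarrow\mathbb{R}^n$. These are mean-zero by symmetry, eigenfunctions of $-\Delta_\Sigma$ with eigenvalue $(n-1)/r_0^2$, and satisfy $\sum y_i^2=r_0^2$, $\sum|\nabla y_i|^2=n-1$. For $n=3$, the round-metric hypothesis is replaced by Hersch's trick: uniformization gives a conformal map $F:\Sigma\to\mathbb{S}^2$, and after precomposing with a Möbius automorphism chosen by a standard degree/Brouwer argument, the pulled-back Euclidean coordinates satisfy $\int_\Sigma\phi_i\,d\sigma=0$ while retaining $\sum\phi_i^2=1$. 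Conformal invariance of the Dirichlet energy in two dimensions then gives $\sum\int_\Sigma|\nabla\phi_i|^2\,d\sigma=8\pi$ regardless of the chosen Möbius normalization.

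Next, I would substitute the twice-contracted Gauss equation \eqref{gauss}, combined with $|h|^2=|\mathring h|^2+H_0^2/(n-1)$, to obtain
\[
|h|^2+\mathrm{Ric}(\nu,\nu)=\tfrac{1}{2}|\mathring h|^2+\tfrac{n\,H_0^2}{2(n-1)} - \tfrac{R_\sigma}{2}+\tfrac{R_g}{2}.
\]
The $R_\sigma$ contribution is then eliminated cleanly in both cases: when $n>3$ by the exact value $R_\sigma=(n-1)(n-2)/r_0^2$, and when $n=3$ by Gauss--Bonnet, $\int_\Sigma R_\sigma\,d\sigma=8\pi$. After algebraic rearrangement (using $|\Sigma|=w_{n-1}r_0^{n-1}$ and the definition \eqref{m_0} of $m_0$), the bound becomes
\[
\lambda_1(S_\Sigma)\;\le\;\frac{n(n-1)m_0}{r_0^{n-1}}\;-\;\frac{1}{2|\Sigma|}\int_\Sigma|\mathring h|^2\,d\sigma\;-\;\frac{1}{2|\Sigma|}\int_\Sigma R_g\,d\sigma,
\]
and bounding $R_g\ge \min_\Sigma R_g$ gives the stated inequality. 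Equality forces $|\mathring h|\equiv 0$ (so $\Sigma$ is totally umbilical) and $R_g\equiv \min_\Sigma R_g$ on $\Sigma$.

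I expect the main technical point to be the Hersch normalization in the $n=3$ case — verifying that the Möbius-transformation parameter can be chosen so that all three pulled-back coordinate functions simultaneously have zero $\sigma$-average, together with the conformal invariance of $\int |\nabla \phi|^2$ in two dimensions. The $n>3$ branch is essentially a direct computation once the round coordinate functions are placed into the Rayleigh quotient; no asymptotic or global analysis is required beyond the Gauss equation and Gauss--Bonnet.
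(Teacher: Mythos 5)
Your proposal is correct and follows essentially the same route as the paper's proof: a Christodoulou--Yau/Hersch-type argument plugging the coordinate functions of an isometric embedding (for $n>3$) or a balanced degree-one conformal map to the round sphere (for $n=3$) into the Rayleigh quotient for $S_{\Sigma}$, then simplifying with the Gauss equation and Gauss--Bonnet. The only cosmetic point is that for $n=3$ the identity $\sum_{i}|\nabla\phi_{i}|^{2}=c_{2}$ holds only after integration (via conformal invariance of the Dirichlet energy), not pointwise as in your opening summary, but your actual $n=3$ argument uses exactly the integrated version, as does the paper.
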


\begin{proof}
Let

\begin{equation*}
    X_{0}: (\mathbb{S}^{n-1},\sigma) \rightarrow (M^{n},g)
\end{equation*}
be an isometric immersion with $X_{0}(\mathbb{S}^{n-1})=\Sigma^{n-1}$. We consider an embedding $\widetilde{X}_{0}$ into the Euclidean sphere of radius $r_{0}$, that is

\begin{equation*}
    \widetilde{X}_{0}: (\mathbb{S}^{n-1}, \sigma) \rightarrow \partial B_{r_{0}} (0) \subset (\mathbb{R}^{n},\delta).
\end{equation*}
When $n>3$, we take $\widetilde{X}_{0}$ to be an isometric embedding given that $\sigma_{ij}=r_{0}^{2}\sigma_{ij}^{\mathbb{S}^{n-1}}$. When $n=3$, we instead take $\widetilde{X}_{0}$ to be a conformal diffeomorphism into $\partial B_{r_{0}}(0) \subset (\mathbb{R}^{3},\delta)$. In both cases, the coordinate functions $\widetilde{X}^{i}_{0}$ of $\widetilde{X}_{0}$ satisfy

\begin{eqnarray}
    \int_{\mathbb{S}^{n-1}} \widetilde{X}^{i}_{0} d\sigma &=& 0, \hspace{3cm} i=1, \dots, n \label{vanishing_coords}  \\ 
    \int_{\mathbb{S}^{n-1}} |\nabla \widetilde{X}^{i}_{0}|^{2} d\sigma &=& \frac{n-1}{n} w_{n-1} r_{0}^{n-1}. \label{dirichlet}
\end{eqnarray}
For $n>3$, these are simply computed on the Euclidean sphere. For $n=3$, Li and Yau showed in \cite{conformal_invariant} that \eqref{vanishing_coords} holds for any conformal map into $\partial B_{r}(0)$, and when $\widetilde{X}_{0}$ is a diffeomorphism \eqref{dirichlet} comes from conformal invariance of the Dirichlet energy. By the first equation, we can consider the stability operator \eqref{stability_operator} on $\phi= \widetilde{X}^{i}_{0}$, that is 

\begin{equation*}
    S_{\Sigma} \widetilde{X}^{i}_{0} = -\Delta_{\Sigma} \widetilde{X}^{i}_{0} - (|h|^{2} + \text{Ric}(\nu,\nu)) \widetilde{X}^{i}_{0}.
\end{equation*}
Multiplying by $\widetilde{X}^{i}_{0}$, integrating, and summing over $i=1, \dots, n$ yields
\begin{eqnarray} \label{stable}
    \sum_{i=1}^{n} \int_{\mathbb{S}^{n-1}} \widetilde{X}^{i}_{0} (S_{\Sigma} \widetilde{X}^{i}_{0}) d\sigma &=& (n-1)w_{n-1} r_{0}^{n-1} - \int_{\mathbb{S}^{n-1}} (|h|^{2} + \text{Ric}(\nu,\nu)) r_{0}^{2} d\sigma  \\
    &=& (n-1)w_{n-1}r_{0}^{n-1} - \int_{\mathbb{S}^{n-1}} \left(\frac{n}{2(n-1)} H^{2} + \frac{1}{2} |\mathring{h}|^{2} - \frac{1}{2} R_{\sigma} + \frac{1}{2} R_{g}\right) r_{0}^{2} d\sigma \nonumber \\
    &=& (n-1)w_{n-1}r_{0}^{n-1} - \frac{n}{2(n-1)} H_{0}^{2} w_{n-1}r_{0}^{n+1}  \nonumber \\
    & & - r_{0}^{2} \int_{\mathbb{S}^{n-1}} |\mathring{h}|^{2} d\sigma + \frac{r_{0}^{2}}{2} \int_{\mathbb{S}^{n-1}} R_{\sigma} d\sigma - \frac{r_{0}^{2}}{2} \int_{\mathbb{S}^{n-1}} R_{g} d\sigma,  \nonumber
\end{eqnarray}
where we have used the Gauss equation \eqref{gauss} to simplify the second term. Now, in our setting

\begin{equation*}
    R_{\sigma} = \begin{cases} 2K_{\sigma} & n=3, \\
    (n-1)(n-2) r_{0}^{-2} & n >3,
    \end{cases}
\end{equation*}
$K_{\sigma}$ being the intrinsic Gauss curvature of $\sigma$. When $n=3$ we can apply the Gauss-Bonnet theorem, and altogether for any $n$ we have that

\begin{equation*}
    \frac{r_{0}^{2}}{2} \int_{\mathbb{S}^{n-1}} R_{\sigma} d\sigma = \frac{(n-1)(n-2)}{2} w_{n-1} r_{0}^{n-1}.
\end{equation*}
Substituting back into \eqref{stable} gives 

\begin{eqnarray}
    \sum_{i=1}^{n} \int_{\mathbb{S}^{n-1}} \widetilde{X}^{i}_{0} (S_{\Sigma} \widetilde{X}^{i}_{0}) d\sigma &=& \frac{n(n-1)}{2} w_{n-1} r_{0}^{n-1} - \frac{n}{2(n-1)} H_{0}^{2} w_{n-1} r_{0}^{n+1} \nonumber \\
    & &- \frac{r_{0}^{2}}{2} \int_{\mathbb{S}^{n-1}} |\mathring{h}|^{2} d\sigma - \frac{r_{0}^{2}}{2} \int_{\mathbb{S}^{n-1}} R_{g} d\sigma \nonumber \\
    &=& \frac{n(n-1)}{2} w_{n-1} r_{0}^{n-1} \left[ 1 - \frac{1}{(n-1)^{2}} H_{0}^{2} r_{0}^{2}  \right]\label{stable2} \\ 
    & & - \frac{r_{0}^{2}}{2} \int_{\mathbb{S}^{n-1}} |\mathring{h}|^{2} d\sigma - \frac{r_{0}^{2}}{2} \int_{\mathbb{S}^{n-1}} R_{g} d\sigma \nonumber \\
    &\leq& \left(\frac{n(n-1)m_{0}}{r_{0}^{n}} - \frac{1}{2} \min_{x \in \Sigma} R_{g}(x)\right)  \left(\sum_{i=1}^{n} \int_{\mathbb{S}^{n-1}} (\widetilde{X}^{i}_{0})^{2} d\sigma \right). \nonumber
\end{eqnarray}
Altogether, there is a $j \in \{ 1, \dots, n \}$ such that

\begin{equation*}
    \int_{\mathbb{S}^{n-1}} \widetilde{X}^{j}_{0} (S_{\Sigma} \widetilde{X}^{j}_{0}) d\sigma \leq \left(\frac{n(n-1)m_{0}}{r_{0}^{n}} - \frac{1}{2} \min_{x \in \Sigma} R_{g}(x) \right) \int_{\mathbb{S}^{n-1}} (\widetilde{X}^{j}_{0})^{2} d\sigma,
\end{equation*}
and from \eqref{stable2} this inequality is strict unless $|\mathring{h}|=0$ and $R_{g}|_{\Sigma} = \min_{\Sigma} R_{g}(x)$.

\end{proof}
According to \thref{eigenvalue}, a Schwarzschild-stable hypersurface $\Sigma^{n-1}$ in a static manifold $(M^{n},g)$ with the Bartnik data \eqref{bartnik_data} is totally umbilical. The Gauss-Codazzi equations are much more tractable given this fact, and the induced metric allows us to deduce that $\Sigma^{n-1}$ is equipotential when $m_{0} \geq 0$ (recall that $m_{0}$ equals the mass of the Schwarzschild extension).

\begin{lemma} \thlabel{static_local}
Let $(M^{n},g)$ be a static Riemannian manifold with boundary $\partial M = \Sigma^{n-1} \cong \mathbb{S}^{n-1}$. Suppose on $\Sigma^{n-1}$ that

\begin{eqnarray*}
    \sigma_{ij} &=& r_{0}^{2} \sigma_{ij}^{\mathbb{S}^{n-1}}, \\
    H_{\Sigma} &=& H_{0},
\end{eqnarray*}
for constants $r_{0}, H_{0} > 0$. Suppose further that $\Sigma^{n-1}$ is Schwarzschild-stable in $(M^{n},g)$. Then the following hold:
\begin{enumerate} [label=(\alph*)]
    \item If $H_{0} < (n-1) r_{0}^{-1}$, then
    \begin{eqnarray*}
    V|_{\Sigma} &=& V_{0}, \\
    \frac{\partial V}{\partial \nu} &=& \rho_{0}, 
\end{eqnarray*}
for constants $V_{0}$, $\rho_{0}$.
\item If $H_{0}=(n-1) r_{0}^{-1}$ and either the maximum or the minimum of $V$ in $\overline{M^{n}}$ occurs on $\Sigma^{n-1}$, then $V$ is constant in $M^{n}$.
\end{enumerate} 
\end{lemma}
\begin{proof}
Since $R_{g}=0$, Schwarzschild stability implies that $\Sigma^{n-1}$ is totally umbilical in view of the previous proposition. Thus the left-hand side of the Codazzi equation \eqref{codazzi} identically vanishes on $\Sigma^{n-1}$, and we are left with

\begin{equation*}
    0= \text{Rc}(X, \nu) = \frac{1}{V} (X(\frac{\partial V}{\partial \nu}) -h(\nabla_{\Sigma} V, X)) = \frac{1}{V} X( \frac{\partial V}{\partial \nu} - \frac{H_{0}}{n-1} V).
\end{equation*}
Hence
\begin{equation} \label{linear}
    \frac{\partial V}{\partial \nu} (y) = \frac{H_{0}}{n-1} V(y) + \beta, \hspace{2cm} y \in \Sigma^{n-1},
\end{equation}
for a constant $\beta \in \mathbb{R}$. To determine the value of $\beta$, we use the static equation on surfaces \eqref{surface_static} and find

\begin{equation*}
    \Delta_{\Sigma} V + \frac{H_{0}^{2}}{n-1} V + \beta H_{0}= - \text{Ric}(\nu,\nu) V.
\end{equation*}
For $|\mathring{h}|=0$ and $R_{g}=0$, we compute

\begin{eqnarray*}
\frac{H^{2}}{n-1} + \text{Ric}(\nu,\nu) &=& \frac{n}{2(n-1)} H_{0}^{2} - \frac{(n-1)(n-2)}{r_{0}^{2}} \\
&=& \frac{n-1}{r_{0}^{2}} - \frac{n(n-1)m_{0}}{r_{0}^{n-1}}, 
\end{eqnarray*}
and so 
\begin{equation} \label{surface_laplacian}
    -\Delta_{\Sigma} V = \left( \frac{n-1}{r_{0}^{2}} - \frac{n(n-1)m_{0}}{r_{0}^{n-1}} \right) V + \beta H_{0}.
\end{equation}
Integrating implies

\begin{equation} \label{beta}
    \beta= -H_{0}^{-1} \left(\frac{n-1}{r_{0}^{2}} - \frac{n(n-1)m_{0}}{r_{0}^{n-1}} \right) \overline{V},
\end{equation}
where $\overline{V}= \frac{1}{|\Sigma|} \int_{\Sigma} V d\sigma$ is the average value of $V$. By combining equations \eqref{surface_laplacian} and \eqref{beta}, we find for the function $f= V - \overline{V}$ that

\begin{equation} \label{V_avg}
    -\Delta_{\Sigma} f = \left(\frac{n-1}{r_{0}^{2}} - \frac{n(n-1)m_{0}}{r_{0}^{n-1}} \right) f.
\end{equation}
Let us first consider the case $H_{0} < (n-1) r_{0}^{-1}$. Since $\Delta_{\Sigma} = \Delta_{r_{0}^{2} \sigma^{\mathbb{S}^{n-1}}}$, the first eigenvalue of $\Delta_{\Sigma}$ is $\lambda_{1}(-\Delta_{\Sigma}) = \frac{n-1}{r_{0}^{2}}$. But

\begin{equation*}
    m_{0} = \frac{r_{0}^{n-2}}{2} \left(1- \frac{r_{0}^{2}}{(n-1)^{2}} H_{0}^{2} \right) > 0,
\end{equation*}
which means that the only solution to \eqref{V_avg} is the trivial one. It immediately follows that both $V$ and $\frac{\partial V}{\partial \nu}$ are constant, proving item $(a)$.

Next we consider the case $H_{0}=(n-1) r_{0}^{-1}$. Inputting \eqref{beta} into equation \eqref{linear}, we find

\begin{eqnarray*}
    \frac{\partial V}{\partial \nu} &=& \frac{H_{0}}{n-1} V - H_{0}^{-1} ( \frac{n-1}{r_{0}^{2}} - \frac{n(n-1) m_{0}}{r_{0}^{n-1}}) \overline{V} \\
    &=& \frac{H_{0}}{n-1} (V - \frac{(n-1)^{2}}{H^{2}_{0} r_{0}^{2}} \overline{V}) = \frac{H_{0}}{n-1} f. 
\end{eqnarray*}
Since $f$ solves \eqref{V_avg}, we must have

\begin{equation} \label{zero}
    \int_{\Sigma} \frac{\partial V} {\partial \nu} d\sigma = 0.
\end{equation}
Now, suppose that $\max_{\overline{M}^{n}} V = V(y_{0})$ for some $y_{0} \in \Sigma^{n-1}$. If $V$ is non-constant in $M^{n}$, then

\begin{equation*}
    \frac{\partial V}{\partial \nu} (y_{0}) < 0
\end{equation*}
by the Hopf Lemma. Since $V(y_{0}) = \max_{\Sigma^{n-1}} V$, we must also have $\frac{\partial V}{\partial \nu} \leq \frac{\partial V}{\partial \nu} (y_{0})$ on $\Sigma^{n-1}$ by \eqref{linear}. This contradicts \eqref{zero}, and so $V$ must be constant in $M^{n}$. The argument is identical if $\min_{\overline{M}^{n}} V$ occurs on $\Sigma^{n-1}$.
\end{proof}

When $H_{0} < (n-1)r_{0}$, the equipotential Minkowski inequality \eqref{static_level_set} for asymptotically flat static systems provides an upper bound on the constant $V_{0}$. This also gives us an upper bound the left-hand side of the inequality \eqref{inequality} which immediately implies saturation for this Bartnik data.
 
\begin{proof}[Proof of \thref{bartnik}]
First, let us address the case $H_{0}=(n-1)r_{0}^{-1}$. Since $V$ is asymptotic to $1$, either its maximum or its minimum occurs on $\Sigma^{n-1}$. Thus $V \equiv 1$ on $M^{n}$ by \thref{static_local}(b). The left-hand side of the Minkowski inequality then becomes

\begin{equation*}
    \frac{1}{(n-1)w_{n-1}}\int_{\Sigma} H V d\sigma + 2m = \frac{1}{(n-1) w_{n-1}} \int_{\Sigma} H_{0} d\sigma
    = r_{0}^{n-2} = \left(\frac{|\Sigma|}{w_{n-1}} \right)^{\frac{n-2}{n-1}}.
\end{equation*}
Therefore, $(M^{n},g) \cong (\mathbb{R}^{n} \setminus B_{r_{0}}(0), \delta)$ by \thref{cmc_rigidity} (we stress again that we do not need $m > 0$ for this theorem).

Next, we address $H_{0} < (n-1) r_{0}^{-1}$. With the help of the static equation once again, we compute the ADM mass of $(M^{n},g)$ as

 \begin{eqnarray}
     m &=& \frac{1}{(n-2)w_{n-1}} \int_{\Sigma} \frac{\partial V}{\partial \nu} d\sigma =  \frac{1}{(n-2) w_{n-1}} \int_{\Sigma} -\text{Ric}(\nu,\nu) \frac{V}{H} d \sigma \nonumber \\
     &=& \frac{V_{0}}{2 (n-2) w_{n-1} H_{0}} \int_{\Sigma} \left((n-2)(n-1) r_{0}^{-2} -\frac{n-2}{(n-1)} H_{0}^{2} \right) d \sigma \nonumber \\
     &=& \frac{V_{0}}{2 H_{0}} r_{0}^{n-1} \left( (n-1) r_{0}^{-2} -\frac{1}{n-1} H_{0}^{2} \right) = \frac{(n-1)V_{0}}{H_{0} r_{0}} m_{0} > 0. \nonumber
 \end{eqnarray}
 Since $\Sigma$ is outer-minimizing with positive ADM mass, we may apply \thref{level_set_rig}. Inequality \eqref{static_level_set} gives
 \begin{equation} \label{upper_bound}
     V_{0} \leq \frac{1}{(n-1)w_{n-1}} \left(\frac{|\Sigma|}{w_{n-1}} \right)^{\frac{2-n}{n-1}} \int_{\Sigma} H_{0} d \sigma = \frac{1}{n-1} r_{0}H_{0}.
 \end{equation}
 Therefore, we may bound the left-hand side of the Minkowski inequality \eqref{inequality} above:

 \begin{eqnarray} \label{upper_bound2}
     \frac{1}{(n-1)w_{n-1}} \int_{\Sigma} VH d\sigma + 2 m &=& \frac{1}{n-1} V_{0} H_{0} r_{0}^{n-1} + 2 \frac{(n-1) V_{0}}{H_{0}r_{0}} m_{0} \nonumber \\
     &\leq& \frac{1}{(n-1)^{2}} H_{0}^{2} r_{0}^{n} + 2m_{0} \\
     &=& r_{0}^{n-2} = \left(\frac{|\Sigma|}{w_{n-1}} \right)^{\frac{n-2}{n-1}}. \nonumber
 \end{eqnarray}
 This means that equality is achieved in \eqref{inequality}. The conclusion again follows from \thref{cmc_rigidity}.
\end{proof}

\subsection{Equipotential CMC Spheres}
We would like to show that the Schwarzschild manifold is the only asymptotically flat static $3$-manifold containing a CMC sphere that is equipotential and Schwarzschild stable. The key ingredient is the following corollary of \thref{eigenvalue}.

\begin{corollary}
Let $(M^{3},g)$ be a Riemannian manifold with scalar curvature $R_{g} \geq 0$, and let $\Sigma^{2}=X_{0}(\mathbb{S}^{2}) \subset (M^{3},g)$ be an immersed CMC sphere. If $\Sigma^{2}$ is Schwarzschild stable in $(M^{3},g)$, then it is umbilical, i.e. $h_{ij}= \frac{H_{0}}{2} \sigma_{ij}$, and $R_{g}$ vanishes over $\Sigma^{2}$.
\end{corollary}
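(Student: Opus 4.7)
The plan is to apply \thref{eigenvalue} directly, with the Schwarzschild stability hypothesis on one side and the $R_g \geq 0$ hypothesis on the other side, and show that both bounds are forced to be saturated.

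First, I would specialize \thref{eigenvalue} to $n=3$. Since $\Sigma^2 = X_0(\mathbb{S}^2)$ is an immersed CMC sphere, no intrinsic roundness assumption is needed in that case, and the eigenvalue bound reads
\begin{equation*}
\lambda_1(S_\Sigma) \;\leq\; \frac{6 m_0}{r_0^2} - \min_{x\in\Sigma}\frac{R_g(x)}{2}.
\end{equation*}
The Schwarzschild stability hypothesis (\thref{stability}) gives the reverse inequality without the $R_g$ term,
\begin{equation*}
\lambda_1(S_\Sigma) \;\geq\; \frac{6 m_0}{r_0^2}.
\end{equation*}
Chaining these two inequalities yields $\min_{x\in\Sigma} R_g(x) \leq 0$.

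Next, I would invoke the global assumption $R_g \geq 0$, which forces $\min_{x\in\Sigma} R_g(x) = 0$. Consequently both inequalities above must be equalities, so \thref{eigenvalue} applies in its rigidity case: $\Sigma^2$ is totally umbilical in $(M^3, g)$ and $R_g$ is constant on $\Sigma$. Since that constant value of $R_g\vert_\Sigma$ equals the minimum, which we just showed is zero, $R_g$ vanishes identically over $\Sigma^2$. Umbilicity together with the CMC assumption then gives $h_{ij} = \frac{H_0}{2}\sigma_{ij}$, as required.

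There is essentially no obstacle here: the proof is a two-line matching of inequalities, and the sign condition $R_g \geq 0$ is precisely what forces saturation in \thref{eigenvalue}. The only thing to be careful about is reading off both the numerical and the rigidity content of \thref{eigenvalue} in the $n=3$ case, where the eigenvalue inequality is unconditional (i.e.\ the intrinsic roundness clause is not needed).
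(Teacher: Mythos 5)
Your proof is correct and is precisely the argument the paper intends: the corollary follows by chaining the Schwarzschild stability lower bound $\lambda_1(S_\Sigma)\geq 6m_0/r_0^2$ against the $n=3$ case of \thref{eigenvalue}, using $R_g\geq 0$ to force $\min_\Sigma R_g=0$ and hence equality, whose rigidity clause gives umbilicity and $R_g\vert_\Sigma\equiv 0$. No issues.
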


\begin{proof}[Proof of \thref{equi_cmc}]
In view of this corollary, we have on $\Sigma^{2}$ that

\begin{eqnarray*}
    V|_{\Sigma} &=& V_{0}, \\
    H_{\Sigma} &=& H_{0}, \\
    h_{ij} &=& \frac{H_{0}}{2} \sigma_{ij}.
\end{eqnarray*}
Therefore,

\begin{eqnarray}
    m &=& \frac{1}{w_{2}} \int_{\Sigma} \frac{\partial V}{\partial \nu} d\sigma = -\frac{V_{0}}{H_{0}w_{2}} \int_{\Sigma} \text{Ric}(\nu,\nu) d\sigma \nonumber \\
    &=& \frac{V_{0}}{H_{0}w_{2}} \int_{\Sigma} \left(K_{\sigma} - \frac{1}{4} H_{0}^{2} \right) d\sigma \label{cmc_mass} \\
    &=& \frac{V_{0}}{H_{0} w_{2}} \left( w_{2} - \frac{w_{2}}{4} H_{0}^{2} r_{0}^{2} \right) = 2\frac{ V_{0}}{H_{0} r_{0}} m_{0} > 0, \nonumber
\end{eqnarray}
where we have used the Gauss-Bonnet theorem once again. Following the static extension proof, we apply the equipotential Minkowski inequality \eqref{static_level_set} and find

\begin{eqnarray*}
    \frac{1}{2w_{2}} \int_{\Sigma} V H d\sigma + 2m &=& \frac{V_{0}}{H_{0}} \left( \frac{1}{2} H_{0}^{2} r_{0}^{2} + 4 m_{0} \right) = 2 \frac{V_{0}}{H_{0}} \\
    &\leq& \left(\frac{|\Sigma|}{w_{2}} \right)^{\frac{1}{2}}.
\end{eqnarray*}
The conclusion follows.
\end{proof}
\subsection{Photon Surfaces}
Now, we address \thref{photon_surface}. We must first consider the Lorentzian picture to understand the geometry of $\Sigma^{2} = P^{3} \cap \{ t = t_{0} \} \subset M^{3} \times \{ t=t_{0} \}$. In any Lorentzian manifold of the form \eqref{static_product} such a time slice is necessarily umbilical. This fact does not depend on the static equations, and since photon surfaces are of interest in a broader class of spacetimes we will present a more general statement here.

\begin{proposition}
Let $(L^{n+1},\overline{g})$ be a Lorentzian manifold with the warped product structure,

\begin{eqnarray*}
    L^{n+1} &=& M^{n} \times \mathbb{R}, \\
    \overline{g} &=& -f(x)^{2} dt^{2} + g, \hspace{1cm} x \in M^{n},
\end{eqnarray*}
for some base manifold $M^{n}$ with Riemannian metric $g$ and function $f: M^{n} \rightarrow \mathbb{R}^{+}$. Let $P^{n} \subset (L^{n+1}, \overline{g})$ be a photon surface. For each $t_{0} \in \mathbb{R}$, the time slice $\Sigma^{n-1} = P^{n} \cap \{ t = t_{0} \} \subset M^{n} \times \{ t_{0} \}$ is totally umbilical in $(M^{n} \times \{ t_{0} \},g)$, i.e. 

\begin{equation*}
    h_{ij} = \frac{H}{n-1} \sigma_{ij}.
\end{equation*}
\end{proposition}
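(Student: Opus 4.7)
The plan is to exploit two structural facts: (i) each time slice $M^{n} \times \{t_{0}\}$ is totally geodesic in $(L^{n+1}, \overline g)$, a consequence of the warped-product form $\overline g = -f(x)^{2} dt^{2} + g$; and (ii) the normal bundle of $\Sigma^{n-1}$ inside $L^{n+1}$ is a 2-dimensional Lorentzian sub-bundle admitting two natural orthonormal frames, one adapted to the time slice and one adapted to the photon surface, related to each other by a Lorentz boost.

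First, I will verify the totally geodesic property via a short coordinate computation. In the chart $(t, x^{1}, \dots, x^{n})$ we have $\overline g_{ti} = 0$ and $\partial_{t} \overline g_{\alpha\beta} = 0$, whence
\[
\Gamma^{t}_{ij} = \tfrac{1}{2} \overline g^{tt}\bigl(\partial_{i} \overline g_{jt} + \partial_{j} \overline g_{it} - \partial_{t} \overline g_{ij}\bigr) = 0.
\]
Therefore, for $X,Y$ tangent to $\{t = t_{0}\}$, the vector $\overline\nabla_{X} Y$ has no $\partial_{t}$-component, and in particular $\overline g(\overline\nabla_{X} Y, T) = 0$, where $T = f^{-1}\partial_{t}$ is the unit timelike normal to the slice.

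Next, I introduce the frame data on $\Sigma$. Let $\nu$ be the spacelike unit normal to $\Sigma$ inside $M^{n} \times \{t_{0}\}$, so that $\{T, \nu\}$ is an orthonormal frame for the normal bundle of $\Sigma$ in $L^{n+1}$ with signature $(-, +)$. Let $\mu$ be the spacelike unit normal of $P^{n}$ in $L^{n+1}$, and let $\tau$ be the unit normal of $\Sigma$ in $P^{n}$; since $\Sigma$ is Riemannian while $P^{n}$ is Lorentzian, $\tau$ is necessarily timelike. Then $\{\tau, \mu\}$ is a second orthonormal frame for the same 2-plane, and after orienting both frames consistently one may write
\[
\mu = \sinh\theta \, T + \cosh\theta \, \nu,
\qquad
\tau = \cosh\theta \, T + \sinh\theta \, \nu
\]
for a smooth function $\theta$ on $\Sigma$.

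Finally, I will transfer umbilicity from $P^{n}$ to $\Sigma$. Take $X, Y \in T_{p}\Sigma$, so they are tangent to both $P^{n}$ and the slice. The total umbilicity of $P^{n}$ in $L^{n+1}$ reads $\overline g(\mu, \overline\nabla_{X} Y) = -\lambda \, \sigma(X,Y)$ for some scalar $\lambda$, where $\sigma$ denotes the induced metric on $\Sigma$ (identical whether viewed from $P^{n}$ or from the slice). Substituting the boost decomposition of $\mu$ and applying Step~1 to annihilate the $T$-term yields $\cosh\theta \, \overline g(\nu, \overline\nabla_{X} Y) = -\lambda \, \sigma(X,Y)$. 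Since $\overline\nabla_{X} Y$ lies in the slice, $\overline g(\nu, \overline\nabla_{X} Y) = g(\nu, \nabla^{g}_{X} Y) = -h(X,Y)$ in the sign convention of the paper, so $\cosh\theta \, h(X,Y) = \lambda \, \sigma(X,Y)$. Tracing over an orthonormal frame of $\Sigma$ fixes the proportionality constant as $H/(n-1)$, giving $h_{ij} = \tfrac{H}{n-1}\sigma_{ij}$. The main piece of bookkeeping will be keeping sign conventions for the second fundamental forms consistent when passing between the Lorentzian ambient and the Riemannian slice, and ensuring the boost parameter $\theta$ can be chosen smoothly and with a fixed orientation along $\Sigma$; these are standard but deserve explicit attention.
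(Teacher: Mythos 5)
Your argument is correct and follows essentially the same route as the paper: first show the time slice $M^{n}\times\{t_{0}\}$ is totally geodesic via the warped-product structure, then decompose the photon surface's spacelike normal in the frame $\{T,\nu\}$ and use umbilicity of $P^{n}$ together with the vanishing of the $T$-component to conclude $h$ is pure trace. The only cosmetic differences are that you parametrize the decomposition by a boost angle (the paper writes $\overline{\nu}=a\,n+b\,\nu$ and notes $b\neq 0$ since $\overline{\nu}$ is spacelike) and you establish total geodesy by computing $\Gamma^{t}_{ij}=0$ rather than by the paper's symmetry argument on $k_{ij}$.
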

\begin{proof}
 The proof is more or less the same as the one in Section 4 of \cite{photon_surface_equipotential}, but we present it here for the convenience of the reader. We show first that $M^{n} \times \{ t_{0}\}$ is totally geodesic in $(L^{n+1}, \overline{g})$. Denote the metric connection on $(L^{n+1}, \overline{g})$ by $\overline{\nabla}$, and its induced connection of $M^{n} \times \{ t=t_{0}\}$ by $\nabla$. We work in coordinates $(x^{1},x^{2}, \dots, x^{n},t)$ on $L^{n+1}$ for local coordinates $(x^{1},x^{2}, \dots,x^{n})$ on $M^{n}$. Then

\begin{eqnarray}
    0 = \partial_{t} g_{ij} &=& \overline{g}(\overline{\nabla}_{\partial_{t}} \partial_{i}, \partial_{j}) + \overline{g}(\overline{\nabla}_{\partial_{t}} \partial_{j}, \partial_{i}) \hspace{1.5cm} i,j=1, \dots, n \label{christoffels} \\
    &=& \overline{g}_{jk} \overline{\Gamma}^{k}_{ti} + \overline{g}_{ik} \overline{\Gamma}^{k}_{tj} \nonumber
\end{eqnarray}
Now, the timelike unit vector field

\begin{equation*}
    n(x,t) = f^{-1}(x) \partial_{t} \in \Gamma(T L^{n+1})
\end{equation*}
is everywhere orthogonal to $ M \times \{ t_{0} \}$, and so the second fundamental form $k_{ij}$ of $M \times \{ t_{0} \} \subset (L^{n+1}, \bar{g})$ is

\begin{eqnarray*}
    k_{ij}=\overline{g}( \overline{\nabla}_{\partial_{i}} n, \partial_{j}) &=& \partial_{i} f^{-1}(x) \overline{g}_{tj} + f^{-1}(x) \overline{g}(\partial_{t}, \overline{\nabla}_{\partial_{i}} \partial_{j}) \hspace{1cm} i,j=1,\dots,n \\
    &=& f^{-1}(x) \partial_{i} \overline{g}_{tj} - f^{-1}(x) \overline{g}(\overline{\nabla}_{\partial_{i}} \partial_{t}, \partial_{j}) \\
    &=& - f^{-1}(x) \overline{g}_{jk} \overline{\Gamma}^{k}_{ti}.
\end{eqnarray*}
Then 

\begin{equation*}
    k_{ij}= k_{ji} = - k_{ij},
\end{equation*}
where the last equality comes from \eqref{christoffels}. Altogether, $M \times \{ t_{0} \}$ is totally geodesic in $(L^{n+1},\bar{g})$.

Now, denote the unit normal of $P^{n} \subset (L^{n+1}, \overline{g})$ by $\overline{\nu}$. Notice that $\overline{\nu}$ must be spacelike given the signature of $\overline{g}$. Also denote by $\nu$ the unit normal of $\Sigma^{n-1} \subset (M^{n} \times \{ t_{0} \}, g)$. $\Sigma^{n-1} \subset L^{n+1}$ is a codimension-$2$ submanifold with normal bundle spanned by the vector fields $\nu$ and $n$. In particular, for each $y \in \Sigma^{n-1}$ there exist constants $a(y)$ and $b(y)$ such that

\begin{equation*}
    \overline{\nu} = a(y) n + b(y) \nu \in T_{x} L^{n+1}.
\end{equation*}
Notice that $b(y) \neq 0$ anywhere because $\overline{\nu}$ is spacelike. For local coordinates $(y^{1}, \dots, y^{n-1})$ on $\Sigma^{n-1}$, we evaluate the second fundamental form $\overline{h}$ of $P^{n}$ on the coordinate vector fields $\partial_{i} \in T_{y} \Sigma^{n-1}$. By umbilicity of $\overline{h}$,

\begin{eqnarray*}
    \frac{\overline{H}}{n} \sigma_{ij} = \overline{h}_{ij} &=& - \overline{g} ( \overline{\nu}, \overline{\nabla}_{\partial_{i}} \partial_{j}) =-a(y) \overline{g} ( n,\overline{\nabla}_{\partial_{i}} \partial_{j}) - b(y) \overline{g} ( \nu, \overline{\nabla}_{\partial_{i}} \partial_{j}) \\
    &=& a(x) k_{ij} -b(y) \overline{g} ( \nu, \overline{\nabla}_{\partial_{i}} \partial_{j}) = - b(y) g(\nu, \nabla_{\partial_{i}} \partial_{j}) \\
    &=& b(y) h_{ij}. 
\end{eqnarray*}
Conclude $h_{ij}(y) = \frac{\overline{H}}{n b(y)} \sigma_{ij}$.
\end{proof}
\begin{corollary}
Let $(L^{n+1},\overline{g})= (M^{n} \times \mathbb{R}, -V(x)^{2}dt^{2} + g)$ be a static spacetime, and let $P^{n} \subset (L^{n+1}, \overline{g})$ be a photon surface. Suppose there is a $t_{0} \in \mathbb{R}$ such that on the time slice $\Sigma^{n-1}=P^{n} \times \{ t=t_{0} \} \subset M^{n} \times \{ t_{0} \}$ we have

\begin{eqnarray*}
    V|_{\Sigma}&=& V_{0}, \\
    \frac{\partial V}{\partial \nu} &=& \rho_{0}.
\end{eqnarray*}
for constants $V_{0}$, $\rho_{0}$. Then $\Sigma^{n-1} \subset (M^{n} \times \{ t_{0} \},g)$ is CMC.
\end{corollary}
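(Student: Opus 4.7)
The plan is to combine the umbilicity established in the preceding proposition with the Codazzi equation and the static equation, in the same spirit as the proof of \thref{cmc} in Section 3. Since the proposition shows that $\Sigma^{n-1}$ is totally umbilical in $(M^n \times \{t_0\},g)$, one has $h_{ij} = \tfrac{H}{n-1}\sigma_{ij}$, which is pure trace. Consequently, as in the first step of the proof of \thref{cmc}, the Codazzi equation \eqref{codazzi} collapses to
\begin{equation*}
-\tfrac{n-2}{n-1}\, X(H) \;=\; \mathrm{Ric}(\nu,X), \qquad X \in T\Sigma.
\end{equation*}

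Next I would feed in the static equation $\nabla^2 V = V\,\mathrm{Ric}$. Since $V|_\Sigma = V_0$ is a positive constant, this yields $V_0\,\mathrm{Ric}(\nu,X) = \nabla^2 V(\nu,X)$. The standard Weingarten identity gives
\begin{equation*}
\nabla^2 V(\nu,X) \;=\; X\!\left(\tfrac{\partial V}{\partial \nu}\right) - h(X,\nabla_\Sigma V),
\end{equation*}
and both terms on the right vanish under the hypotheses: the first because $\partial V/\partial \nu \equiv \rho_0$ is constant along $\Sigma$, and the second because $V|_\Sigma \equiv V_0$ forces $\nabla_\Sigma V \equiv 0$. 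Hence $\mathrm{Ric}(\nu,X) = 0$ for every $X \in T\Sigma$, and combining with the collapsed Codazzi identity above forces $X(H) = 0$ for all tangent $X$ (using $n \geq 3$). Thus $H$ is constant on $\Sigma$, which is exactly the CMC conclusion.

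There is no real obstacle to this argument: the two constancy hypotheses on $V$ and $\partial V/\partial \nu$ are precisely what is needed to kill the normal Ricci term produced by the Codazzi equation, and the umbilicity of $\Sigma^{n-1}$ has already been supplied by the preceding proposition. The only item that merits a short verification is the Weingarten identity for $\nabla^2 V(\nu,X)$, which follows immediately from differentiating $\langle \nabla V,\nu\rangle$ along $X$ and applying $\nabla_X \nu = h(X,\cdot)^\sharp$. I would present the argument in this compressed form, essentially as a one-paragraph corollary of the preceding proposition together with the proof technique already developed for \thref{cmc}.
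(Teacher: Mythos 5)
Your argument is correct and matches the paper's proof essentially verbatim: the paper likewise uses the umbilicity from the preceding proposition, computes $\mathrm{Ric}(X,\nu) = \frac{1}{V_{0}}\bigl(X(\nu(V)) - h(X,\nabla_{\Sigma}V)\bigr) = 0$ from the two constancy hypotheses, and concludes $X(H)=0$ via the collapsed Codazzi equation. No gaps.
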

\begin{proof}
By the previous proposition, $\Sigma^{n-1}$ is also umbilical in $M^{n} \times \{ t_{0} \}$. For $X \in T_{x} \Sigma$

\begin{equation*}
    \text{Ric}(X, \nu) = \frac{1}{V_{0}} \nabla^{2} V(X, \nu) = \frac{1}{V_{0}} (X(\nu(V)) - h(X, \nabla_{\Sigma} V)) = 0.
\end{equation*}
On the other hand, the left-hand side of the Codazzi equation \eqref{codazzi} equals $\frac{2-n}{n-1} X(H)$. Conclude that $\nabla_{\Sigma} H =0$.
\end{proof}

\begin{proof}[Proof of \thref{photon_surface}]
Given

\begin{eqnarray*}
    V|_{\Sigma} &=& V_{0}, \\
    H_{\Sigma} &=& H_{0}, \\
    h_{ij} &=& \frac{H_{0}}{n-1} \sigma_{ij},
\end{eqnarray*}
on $\Sigma^{n-1} = P^{n} \cap \{ t= t_{0} \} \subset (M^{n} \times \{ t_{0} \},g)$, we follow the proofs of the previous two uniqueness theorems. Once again from the mass formula \eqref{mass_formula}, the static equation \eqref{surface_static}, and the Gauss equation \eqref{gauss}, we get

\begin{eqnarray*}
    m &=& \frac{1}{(n-2)w_{n-1}} \int_{\Sigma} \frac{\partial V}{\partial \nu} d\sigma =  \frac{1}{(n-2)w_{n-1}} \int_{\Sigma} -\text{Ric}(\nu,\nu) \frac{V}{H} d\sigma  \\
    &=& \frac{V_{0}}{2(n-2) w_{n-1} H_{0}} \int_{\Sigma} \left( R_{\sigma} - \frac{n-2}{n-1} H_{0}^{2} \right) d\sigma = \frac{V_{0}} {2(n-2) w_{n-1} H_{0}} \left(w_{n}^{\frac{n-3}{n-1}} E(\Sigma) r_{0}^{n-3} - \frac{n-2}{n-1} H_{0}^{2} \right) \\
    &\leq& \frac{V_{0}}{2H_{0}} r_{0}^{n-1} \left( (n-1)r_{0}^{-2} - \frac{1}{n-1} H_{0}^{2} \right) =\frac{(n-1)V_{0}}{H_{0}r_{0}} m_{0},
\end{eqnarray*}
with the upper bound coming from $E(\Sigma) \leq E(\mathbb{S}^{n-1}) = (n-2)(n-1) w_{n-1}^{\frac{2}{n-1}}$. Applying \eqref{upper_bound} and \eqref{upper_bound2} yields saturation once again, and so the region outside $P^{n} \cap \{ t= t_{0} \}$ is rotationally symmetric. Since an open subset of $(M^{n},g)$ isometric embeds into the maximal Schwarzschild manifold \eqref{schwarzschild}, the conclusion follows from the analyticity of static metrics, see for example \cite{static_analytic} and \cite{static_analytic_chrusciel}. 



\end{proof}

\section{Hawking Mass Comparison}

In this section, we prove \thref{hawking_comparison}. Given that $\Sigma$ is outer-minimizing with respect to the metric $g_{-}$, the inequality \eqref{comparison} is a simple consequence of \thref{conformal_rig}.

\begin{proof}[Proof of \thref{hawking_comparison}]
Applying the inequality \eqref{conformal_inequality} from \thref{conformal_rig} and H\"older's inequality for $p=\frac{n-1}{n-2}$, we have that

\begin{eqnarray}
    \int_{\Sigma} V^{2} d \sigma &\leq& |\Sigma|^{\frac{1}{n-1}} \left(\int_{\Sigma} V^{2 \frac{n-1}{n-2}} d\sigma \right)^{\frac{n-2}{n-1}} \leq \frac{1}{n-1} \left(\frac{|\Sigma|}{w_{n-1}} \right)^{\frac{1}{n-1}} \int_{\Sigma} VH d\sigma \label{cauchy_schwarz} \\
    &\leq& \frac{1}{n-1} \left(\frac{|\Sigma|}{w_{n-1}} \right)^{\frac{1}{n-1}} \left(\int_{\Sigma} V^{2} d\sigma \right)^{\frac{1}{2}} \left( \int_{\Sigma} H^{2} d\sigma \right)^{\frac{1}{2}}. \nonumber
\end{eqnarray}
Thus

\begin{equation} \label{holder}
    \left(\frac{|\Sigma|}{w_{n-1}} \right)^{-\frac{1}{n-1}} \int_{\Sigma} VH d\sigma \leq \left(\frac{|\Sigma|}{w_{n-1}} \right)^{-\frac{1}{n-1}} \left(\int_{\Sigma} V^{2} d\sigma \right)^{\frac{1}{2}} \left(\int_{\Sigma} H^{2} d\sigma \right)^{\frac{1}{2}} \leq \frac{1}{n-1} \int_{\Sigma} H^{2} d\sigma.
\end{equation}
This implies $Q(\Sigma) \geq m_{H}(\Sigma)$ for $n=3$. If equality holds then $V|_{\Sigma}= V_{0}$ and $H_{\Sigma} = H_{0}$ are each constant by \eqref{cauchy_schwarz}. But by \eqref{holder}, we must also have

\begin{equation*}
    V_{0} = \frac{1}{n-1} \left(\frac{|\Sigma|}{w_{n-1}} \right)^{\frac{1}{n-1}} H_{0},
\end{equation*}
implying equality in inequality \eqref{static_level_set}.
\end{proof}

\appendix

\printbibliography[title={References}]

\begin{center}
\textnormal{ \large National Center for Theoretical Sciences, Mathematics Division \\
National Taiwan University \\
Taipei City, Taiwan 10617\\
e-mail: bharvie@ncts.ntu.edu.tw}\\
\end{center}
\vspace{1cm}
\begin{center}
\textnormal{ \large Department of Applied Mathematics \\
National Yang-Ming Chiao Tung University \\
Hsinchu, Taiwan 30010 \\
\vspace{0.5cm}
National Center for Theoretical Sciences, Mathematics Division \\
National Taiwan University \\
Taipei City, Taiwan 10617\\
e-mail: ykwang@math.nycu.edu.tw}\\
\end{center}

\end{document}